\newtheorem{theorem}{Theorem}
\newtheorem{corollary}[theorem]{Corollary}
\newtheorem{definition}[theorem]{Definition}
\newtheorem{lemma}[theorem]{Lemma}
\newtheorem{remark}[theorem]{Remark}
\renewcommand{\thefootnote}{\alph{footnote}}
\title{The new notion of Bohl dichotomy for nonautonomous difference equations and its relation to exponential dichotomy}
\author[1]{Adam Czornik}
\author[2]{Konrad Kitzing}
\author[2]{Stefan Siegmund}
\affil[1]{Faculty of Automatic Control, Electronics and Computer Science, Silesian University of Technology, Gliwice, Poland}
\affil[2]{Institute of Analysis, Faculty of Mathematics, TU Dresden, Germany}
\date{\today}
\newcommand\blfootnote[1]{%
  \begingroup
  \renewcommand\thefootnote{}\footnote{#1}%
  \addtocounter{footnote}{-1}%
  \endgroup
}
\begin{document}

\maketitle

\blfootnote{
Adam Czornik: adam.czornik@polsl.pl\\
Konrad Kitzing: konrad.kitzing@tu-dresden.de\\
Stefan Siegmund: stefan.siegmund@tu-dresden.de
}

\begin{abstract}
In \cite{CzornikEtal2023} the concept of Bohl dichotomy is introduced which is a notion of hyperbolicity for linear nonautonomous difference equations that is weaker than the classical concept of exponential dichotomy. In the class of systems with bounded invertible coefficient matrices which have bounded inverses, we study the relation between the set $\mathrm{BD}$ of systems with Bohl dichotomy and the set $\mathrm{ED}$ of systems with exponential dichotomy. It can be easily seen from the definition of Bohl dichotomy that $\mathrm{ED} \subseteq \mathrm{BD}$. Using a counterexample we show that the closure of $\mathrm{ED}$ is not contained in $\mathrm{BD}$. The main result of this paper is the characterization $\operatorname{int}\mathrm{BD} = \mathrm{ED}$. The proof uses upper triangular normal forms of systems which are dynamically equivalent and utilizes a diagonal argument to choose subsequences of perturbations each of which is constructed with the Millionshikov Rotation Method. An Appendix describes the Millionshikov Rotation Method in the context of nonautonomous difference equations as a universal tool.
\end{abstract}

\textbf{\textit{Keywords:}}
Millionshikov rotation method, nonautonomous difference equations, hyperbolicity, Bohl dichotomy, exponential dichotomy

\textbf{\textit{AMS Subject Classification:}}
37 D 25, 39 A 06

\newpage

The following is a draft of the article
\begin{center}
    The new notion of Bohl dichotomy for nonautonomous difference equations and its relation to exponential dichotomy
\end{center}
published in the \emph{Journal of Difference Equations and Applications}.

\section{Exponential and Bohl dichotomy}

Consider the system
\begin{equation}
x(n+1)=A(n)x(n),\qquad n\in \mathbb{N},  \label{1}
\end{equation}%
with invertible $A(n) \in \mathbb{R}^{d \times d}$ for each $n\in \mathbb{N}=\{0,1,\dots \}$. We denote the transition
matrix of system (\ref{1}) by $\Phi _{A}(n,m)$, $n,$ $m\in \mathbb{N}$, i.e.
\begin{equation*}
\Phi _{A}(n,m)=%
\begin{cases}
A(n-1)\cdots A(m) & \text{ for }n>m, \\
I & \text{ for }n=m, \\
\Phi _{A}(m,n)^{-1} & \text{ for }n<m,%
\end{cases}%
\end{equation*}%
where $I$ denotes the identity matrix in $\mathbb{R}^{d\times d}$. Any
solution $\left( x(n)\right) _{n\in \mathbb{N}}$ of \eqref{1} satisfies
\begin{equation*}
x(n)=\Phi _{A}(n,m)x(m),\qquad n,m\in \mathbb{N}.
\end{equation*}%
For every $k\in \mathbb{N}$ and $x_{k}\in \mathbb{R}^{d}$ the unique
solution of \eqref{1} which satisfies the initial condition $x(k)=x_{k}$ is
denoted by $(x(n,k,x_{k}))_{n\in \mathbb{N}}$ and for
short by $(x(n,x_{0}))_{n\in \mathbb{N}}$ if $k=0$. In particular,
\begin{equation*}
x(n,x_{0})=\Phi _{A}(n,0)x_{0},\qquad n\in \mathbb{N}.
\end{equation*}%
For $x \in \mathbb{R}^d$ and $M \in \mathbb{R}^{d\times d}$ we denote the Euclidean norm of $x$ by $\|x\|$ and the induced matrix norm of $M$ by $\|M\|$ (which is also called spectral norm of $M$).
Throughout the paper we assume that $A=(A(n))_{n\in \mathbb{N}}$ and $%
A^{-1} \coloneqq (A(n)^{-1})_{n\in \mathbb{N}}$ are bounded, i.e.\ $A\in \mathcal{L}^{%
\mathrm{Lya}}(\mathbb{N},\mathbb{R}^{d\times d}) \coloneqq \{B:B,B^{-1}\in \mathcal{L}%
^{\infty }(\mathbb{N},\mathbb{R}^{d\times d})\}$ is a so-called \emph{%
Lyapunov sequence}, where $\mathcal{L}^{\infty }(\mathbb{N},\mathbb{R}%
^{d\times d})$ denotes the Banach space of bounded sequences $B=(B(k))_{k\in
\mathbb{N}}$ in $\mathbb{R}^{d\times d}$ with norm $\Vert B\Vert _{\infty
}=\sup_{k\in \mathbb{N}}\Vert B(k)\Vert $.

For the definition of exponential dichotomy, a classical notion of hyperbolicity for system \eqref{1}, see e.g.\ \cite{CzornikEtal2023} and the references therein.
\begin{definition}[Exponential dichotomy]
\label{ED} System \eqref{1} has an \emph{exponential dichotomy (ED)} if
there exist subspaces $L_{1},L_{2}\subseteq \mathbb{R}^{d}$ with $\mathbb{R}%
^{d}=L_{1}\oplus L_{2}$, $\alpha >0$ and $K>0$ such that
\begin{align}
\Vert x(n,x_{0})\Vert & \leq K\mathrm e^{-\alpha (n-m)}\Vert x(m,x_{0})\Vert , &
\!\!x_{0}& \in L_{1},n\geq m,  \label{Dich1} \\
\Vert x(n,x_{0})\Vert & \geq K^{-1}\mathrm e^{\alpha (n-m)}\Vert x(m,x_{0})\Vert , &
\!\!x_{0}& \in L_{2},n\geq m.  \label{Dich2}
\end{align}
\end{definition}
In \cite[Definition 3]{CzornikEtal2023} the following weaker notion of hyperbolicity was introduced for system \eqref{1} to which we refer as Bohl dichotomy, see also \cite{Barreira2018}.
\begin{definition}[Bohl dichotomy]
\label{bohl} System \eqref{1} has a \emph{Bohl dichotomy (BD)} if there
exist subspaces $L_{1},L_{2}\subseteq \mathbb{R}^{d}$ with $\mathbb{R}%
^{d}=L_{1}\oplus L_{2}$, $\alpha >0$ and functions $C_{1},C_{2}\colon
\mathbb{R}^{d}\rightarrow \left( 0,\infty \right) $ such that
\begin{align}
\Vert x(n,x_{0})\Vert & \leq C_{1}(x_{0})\mathrm e^{-\alpha (n-m)}\Vert
x(m,x_{0})\Vert , & \!\!x_{0}& \in L_{1},n\geq m,  \label{11} \\
\Vert x(n,x_{0})\Vert & \geq C_{2}(x_{0})\mathrm e^{\alpha (n-m)}\Vert
x(m,x_{0})\Vert , & \!\!x_{0}& \in L_{2},n\geq m.  \label{12}
\end{align}
\end{definition}
The notion of Bohl dichotomy is thus weaker than that of exponential dichotomy, because the constants $C_1(x_0)$ and $C_2(x_0)$ in the estimates \eqref{11} and \eqref{12} do depend on the state variable $x_0$ in \(L_1\) and \(L_2\), respectively, whereas in \eqref{Dich1} and \eqref{Dich2} the estimates are uniform on \(L_1\) and \(L_2\).
With the abbreviations
\begin{equation*}
\mathrm{ED}^{d}\coloneqq\big\{A\in \mathcal{L}^{\mathrm{Lya}}(\mathbb{N},\mathbb{R}%
^{d\times d}):\text{\eqref{1} admits an exponential dichotomy}\big\}
\end{equation*}
for systems in $\mathbb{R}^d$ with exponential dichotomy, and
\begin{equation*}
\mathrm{BD}^{d}\coloneqq\big\{A\in \mathcal{L}^{\mathrm{Lya}}(\mathbb{N},\mathbb{R}%
^{d\times d}):\text{\eqref{1} admits a Bohl dichotomy}\big\}
\end{equation*}
for systems in $\mathbb{R}^d$ with Bohl dichotomy, we therefore have the inclusion
\begin{equation*}
    \mathrm{ED}^d \subseteq \mathrm{BD}^d.
\end{equation*}
For continuous-time systems the concept of Bohl dichotomy was first proposed in \cite{Bekrayeva2010} where it is called weak dichotomy and the discussion was continued in \cite{BarabanovandBekrayeva2020} by introducing yet another concept of hyperbolicity called almost exponential dichotomy.
In particular, it is shown in \cite{Bekrayeva2010} and \cite{BarabanovandBekrayeva2020} that the concept of Bohl dichotomy is a significant generalization of exponential dichotomy.
A similar discussion in the discrete case has been made in \cite{Barreira2018}, where the discrete analogue of Bohl dichotomy appeared first.

In this paper, we study topological aspects of the sets of systems with exponential and Bohl dichotomy by equipping \(\mathrm{ED}^d\) and \(\mathrm{BD}^d\) with the relative topology inherited from the topological space $\mathcal{L}^{\mathrm{Lya}}(\mathbb{N},\mathbb{R}^{d\times d})$ equipped with the topology of uniform convergence. Our main result is that the interior of \(\mathrm{BD}^d\) equals \(\mathrm{ED}^d\),
\begin{equation*}
    \operatorname{int}\mathrm{BD}^d = \mathrm{ED}^d.
\end{equation*}
For the proof of this result, we study and use properties of Bohl exponents, which are closely related to exponential and Bohl dichotomy. We also apply a perturbation result which is called Millionshikov rotation method and which is used in the continuous case e.g.\ in \cite{BarabanovandBekrayeva2020}.
We provide a detailed analysis of this method in the discrete case in the appendix.
Basic knowledge of the dynamical theory of discrete time systems is helpful, we refer to \cite[pp.\ 335ff]{Elaydi2005} in that regard.

\section{Bohl exponents}

The notion of Bohl spectrum and Bohl exponents was proposed in \cite{Doan2017} for continuous time systems. In this paper we use Bohl exponents to characterize Bohl and exponential dichotomies.
For a detailed analysis of the Bohl exponents and proofs of the following results see \cite{CzornikEtal2023}.
In particular, in \cite[Remark 8]{CzornikEtal2023} a discussion of Bohl exponents in relation to other exponents and equivalent definitions is discussed.
It should be noted that in the literature a series of other but equivalent definitions of these exponents can be found and sometimes they appear under different names (see also \cite[Remark 8]{CzornikEtal2023}).


\begin{definition}[Bohl exponents]
The \emph{upper Bohl exponent} $\overline{\beta }_{A}(L)$ and the \emph{%
lower Bohl exponent }$\underline{\beta }_{A}(L)$ of system \eqref{1} on a subspace $L\subseteq
\mathbb{R}^{d}$, $L\neq \{0\}$, are defined as
\begin{align*}
\overline{\beta }_{A}(L)& :=\inf_{N\in \mathbb{N}}\sup_{\substack{ (n,m)\in
\mathbb{N}\times \mathbb{N}  \\ n-m>N,\text{ }m>N}}\sup \Big\{\frac{1}{n-m}%
\ln \frac{\Vert x(n,x_{0})\Vert }{\Vert x(m,x_{0})\Vert }:x_{0}\in
L\setminus \{0\}\Big\},
\\
\underline{\beta }_{A}(L)& :=\sup_{N\in \mathbb{N}}\inf_{\substack{ (n,m)\in
\mathbb{N}\times \mathbb{N}  \\ n-m>N,\text{ }m>N}}\inf \Big\{\frac{1}{n-m}%
\ln \frac{\Vert x(n,x_{0})\Vert }{\Vert x(m,x_{0})\Vert }:x_{0}\in
L\setminus \{0\}\Big\},
\end{align*}%
and $\overline{\beta }_{A}(\{0\}):=-\infty $, $\underline{\beta }%
_{A}(\{0\}):=+\infty $.
\\[2ex]
\emph{Classical exponents:}
If $\dim L=1$ and $x_{0}\in L\setminus
\{0\}$ we define the notation
\begin{align*}
    \overline\beta_A(x_0) &\coloneqq \overline\beta_A(L) = \inf_{N\in \mathbb{N}}\sup_{\substack{ (n,m)\in
\mathbb{N}\times \mathbb{N}  \\ n-m>N,\text{ }m>N}}\frac{1}{n-m} \ln \frac{\Vert x(n,x_{0})\Vert }{\Vert x(m,x_{0})\Vert },
\\
    \underline\beta_A(x_0) &\coloneqq \underline{\beta }_{A}(L) = \sup_{N\in \mathbb{N}}\inf_{\substack{ (n,m)\in
\mathbb{N}\times \mathbb{N}  \\ n-m>N,\text{ }m>N}} \frac{1}{n-m} \ln \frac{\Vert x(n,x_{0})\Vert }{\Vert x(m,x_{0})\Vert },
\end{align*}
which is independent of the choice of \(x_0 \in L \setminus \{0\}\).
\end{definition}

From the definitions of $\overline{\beta }_{A}(\mathbb{R}^{d})$ and $%
\underline{\beta }_{A}(\mathbb{R}^{d})$ it follows that
\begin{equation*}
\overline{\beta }_{A}(\mathbb{R}^{d})\geq \sup_{x_{0}\in \mathbb{R}%
^{d}\setminus \{0\}}\overline{\beta }_{A}(x_{0})
\end{equation*}%
and%
\begin{equation*}
\underline{\beta }_{A}(\mathbb{R}^{d})\leq \inf_{x_{0}\in \mathbb{R}%
^{d}\setminus \{0\}}\underline{\beta }_{A}(x_{0})
\end{equation*}%
for any $A\in \mathcal{L}^{\mathrm{Lya}}(\mathbb{N},\mathbb{R}^{d\times d}).$
In general the last two inequalities may be strict (see \cite{Czornik2019}%
). In particular, it follows from Lemmas \ref{lem:BD-characterization} and \ref{lem:ED-characterization} below that there are systems with a Bohl dichotomy
which do not admit an exponential dichotomy.

We quote two lemmas and a corollary from  \cite{CzornikEtal2023} which characterize Bohl dichotomy and
exponential dichotomy in terms of Bohl exponents.

\begin{lemma}[Characterization of Bohl dichotomy]\label{lem:BD-characterization}
The following three statements are
equivalent:

(i) System \eqref{1} has a Bohl dichotomy.

(ii) There exists a splitting $L_1 \oplus L_2 = \mathbb{R}^d$ with
\begin{equation*}
\sup_{x_0 \in L_1 \setminus \{0\}} \overline{\beta}_{A}(x_0) < 0 \qquad
\text{and} \qquad \inf_{x_0 \in L_2 \setminus \{0\}} \underline{\beta}%
_{A}(x_0) > 0.
\end{equation*}

(iii) There is $\alpha >0$, such that for all $x_{0}\in \mathbb{R}%
^{d}\setminus \{0\}$,
\begin{equation*}
\overline{\beta }_{A}(x_{0})\leq -\alpha \qquad \text{or}\qquad \underline{%
\beta }_{A}(x_{0})\geq \alpha .
\end{equation*}
Moreover, if system \eqref{1} has a Bohl dichotomy with splitting $L_1 \oplus L_2 = \mathbb R^d$, then statement (ii) holds with that splitting.
\end{lemma}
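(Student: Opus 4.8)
The plan is to prove the chain (i)$\Rightarrow$(ii)$\Rightarrow$(iii)$\Rightarrow$(i), preceded by an \emph{exponential reformulation} of the Bohl exponents. Namely, for $x_{0}\in\mathbb{R}^{d}\setminus\{0\}$, the quantity $\overline{\beta}_{A}(x_{0})$ equals the infimum of those $\gamma\in\mathbb{R}$ for which there is $C>0$ with $\Vert x(n,x_{0})\Vert\le C\,\mathrm{e}^{\gamma(n-m)}\Vert x(m,x_{0})\Vert$ for all $n\ge m\ge 0$, and symmetrically $\underline{\beta}_{A}(x_{0})$ equals the supremum of those $\gamma$ for which there is $c>0$ with $\Vert x(n,x_{0})\Vert\ge c\,\mathrm{e}^{\gamma(n-m)}\Vert x(m,x_{0})\Vert$ for all $n\ge m\ge 0$. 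That such an estimate forces the stated inequality on $\overline{\beta}_{A}$, resp.\ $\underline{\beta}_{A}$, follows by inserting it into the defining formula and using $N^{-1}\ln C\to 0$; for the converse the defining formula first gives the estimate for indices with $n-m>N$ and $m>N$, and a routine bootstrap based on the Lyapunov bounds $M^{-(n-m)}\Vert x(m,x_{0})\Vert\le\Vert x(n,x_{0})\Vert\le M^{n-m}\Vert x(m,x_{0})\Vert$, $M:=\max\{1,\Vert A\Vert_{\infty},\Vert A^{-1}\Vert_{\infty}\}$, promotes it to all $n\ge m\ge 0$ at the cost of enlarging the constant (in particular $\overline{\beta}_{A}(x_{0}),\underline{\beta}_{A}(x_{0})\in[-\ln M,\ln M]$). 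With this, (i)$\Rightarrow$(ii) together with the ``Moreover'' statement is immediate: if \eqref{1} has a Bohl dichotomy with a prescribed splitting $L_{1}\oplus L_{2}=\mathbb{R}^{d}$, constant $\alpha>0$ and functions $C_{1},C_{2}$, then \eqref{11} is exactly the exponential estimate with $\gamma=-\alpha$ and $C=C_{1}(x_{0})$ for $x_{0}\in L_{1}\setminus\{0\}$, so $\overline{\beta}_{A}(x_{0})\le-\alpha$ there, while \eqref{12} gives $\underline{\beta}_{A}(x_{0})\ge\alpha$ for $x_{0}\in L_{2}\setminus\{0\}$; hence $\sup_{x_{0}\in L_{1}\setminus\{0\}}\overline{\beta}_{A}(x_{0})\le-\alpha<0$ and $\inf_{x_{0}\in L_{2}\setminus\{0\}}\underline{\beta}_{A}(x_{0})\ge\alpha>0$, i.e.\ (ii) holds with that very splitting.

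For (ii)$\Rightarrow$(iii) I would set $\beta_{1}:=-\sup_{x_{0}\in L_{1}\setminus\{0\}}\overline{\beta}_{A}(x_{0})>0$, $\beta_{2}:=\inf_{x_{0}\in L_{2}\setminus\{0\}}\underline{\beta}_{A}(x_{0})>0$, and $\alpha:=\min\{\beta_{1},\beta_{2}\}$, and for $x_{0}\ne 0$ decompose $x_{0}=y_{1}+y_{2}$ with $y_{i}\in L_{i}$. If $y_{2}=0$ then $x_{0}\in L_{1}$ and $\overline{\beta}_{A}(x_{0})\le-\beta_{1}\le-\alpha$. If $y_{2}\ne 0$, the claim is $\underline{\beta}_{A}(x_{0})\ge\beta_{2}$: this is clear when $y_{1}=0$, and otherwise the exponential reformulation (at $m=0$) supplies constants with $\Vert x(n,y_{1})\Vert\le C\,\mathrm{e}^{-(\beta_{1}/2)n}\Vert y_{1}\Vert$ and $\Vert x(n,y_{2})\Vert\ge c\,\mathrm{e}^{(\beta_{2}/2)n}\Vert y_{2}\Vert$, so $\Vert x(n,y_{1})\Vert\le\tfrac12\Vert x(n,y_{2})\Vert$ for all $n\ge n_{0}$ (some $n_{0}$) and hence $\tfrac12\Vert x(n,y_{2})\Vert\le\Vert x(n,x_{0})\Vert\le\tfrac32\Vert x(n,y_{2})\Vert$ for $n\ge n_{0}$; substituting this into the quotients defining $\underline{\beta}_{A}(x_{0})$ and absorbing the factor $3$ via $(n-m)^{-1}\ln 3\to 0$ gives $\underline{\beta}_{A}(x_{0})\ge\underline{\beta}_{A}(y_{2})\ge\beta_{2}$. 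In either case the alternative in (iii) holds with this $\alpha$.

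For (iii)$\Rightarrow$(i), let $\alpha>0$ be as in (iii) and put $V_{s}:=\{0\}\cup\{x_{0}\ne 0:\overline{\beta}_{A}(x_{0})\le-\alpha\}$. I would first check that $V_{s}$ is a linear subspace: invariance under scaling is immediate, and for additivity, given $x_{0},x_{0}'\in V_{s}$ with $z:=x_{0}+x_{0}'\ne 0$, the exponential reformulation (at $m=0$) gives $\Vert x(n,x_{0})\Vert,\Vert x(n,x_{0}')\Vert\le(\mathrm{const})\,\mathrm{e}^{-(\alpha/2)n}$, so $\Vert x(n,z)\Vert\to 0$; were $z\notin V_{s}$, then (iii) would force $\underline{\beta}_{A}(z)\ge\alpha$ and hence $\Vert x(n,z)\Vert\ge(\mathrm{const})\,\mathrm{e}^{(\alpha/2)n}\to\infty$, a contradiction, so $z\in V_{s}$. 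Now take $L_{1}:=V_{s}$ and any complement $L_{2}$ with $\mathbb{R}^{d}=L_{1}\oplus L_{2}$. For $x_{0}\in L_{1}\setminus\{0\}$ one has $\overline{\beta}_{A}(x_{0})\le-\alpha$, so the reformulation yields $C_{1}(x_{0})>0$ with $\Vert x(n,x_{0})\Vert\le C_{1}(x_{0})\,\mathrm{e}^{-(\alpha/2)(n-m)}\Vert x(m,x_{0})\Vert$, i.e.\ \eqref{11} with rate $\alpha/2$; for $x_{0}\in L_{2}\setminus\{0\}$ one has $x_{0}\notin V_{s}$, so (iii) gives $\underline{\beta}_{A}(x_{0})\ge\alpha$, whence $C_{2}(x_{0})>0$ with $\Vert x(n,x_{0})\Vert\ge C_{2}(x_{0})\,\mathrm{e}^{(\alpha/2)(n-m)}\Vert x(m,x_{0})\Vert$, i.e.\ \eqref{12} with rate $\alpha/2$. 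Extending $C_{1},C_{2}$ by $1$ on the rest of $\mathbb{R}^{d}$, this exhibits a Bohl dichotomy.

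The two genuinely delicate points are the steps that combine solutions from different initial vectors. In (ii)$\Rightarrow$(iii) one must show that a vector with nonzero $L_{2}$-component inherits the lower Bohl exponent of that component; the asymptotic-domination estimate handles it, but the nested $\inf$/$\sup$ in the definition of $\underline{\beta}_{A}$ requires some care. In (iii)$\Rightarrow$(i) the hard part is that $V_{s}$ is closed under addition: proving this directly from the definition of $\overline{\beta}_{A}$ is awkward, because near-cancellation of $x(m,x_{0})$ and $x(m,x_{0}')$ at intermediate times can inflate the quotient $\Vert x(n,z)\Vert/\Vert x(m,z)\Vert$ even though each summand decays; the device that avoids this is to feed the dichotomy alternative (iii) back into the argument — a sum of exponentially decaying solutions cannot satisfy the expanding alternative, so it must satisfy the contracting one.
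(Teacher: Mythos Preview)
The paper does not prove this lemma; it is quoted verbatim from \cite{CzornikEtal2023} (see the sentence ``We quote two lemmas and a corollary from \cite{CzornikEtal2023}\ldots'' preceding the statement), so there is no in-paper proof to compare against. Your argument therefore has to be judged on its own.

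Your proof is correct. The exponential reformulation of $\overline{\beta}_{A}(x_{0})$ and $\underline{\beta}_{A}(x_{0})$ is the right tool, and your bootstrap to pass from the range $n-m>N$, $m>N$ to all $n\ge m\ge 0$ via the Lyapunov bound $M^{\pm(n-m)}$ is standard and works as you describe. The implication (i)$\Rightarrow$(ii) together with the ``Moreover'' clause is indeed immediate from the reformulation. In (ii)$\Rightarrow$(iii) the asymptotic-domination step is the crux, and your estimate $\tfrac12\Vert x(n,y_{2})\Vert\le\Vert x(n,x_{0})\Vert\le\tfrac32\Vert x(n,y_{2})\Vert$ for $n\ge n_{0}$ correctly transfers $\underline{\beta}_{A}(y_{2})$ to $\underline{\beta}_{A}(x_{0})$ once one notes that the $\sup_{N}$ in the definition allows restricting to $m>N\ge n_{0}$. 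In (iii)$\Rightarrow$(i) your observation that closure of $V_{s}$ under addition should not be attacked head-on, but by feeding the alternative in (iii) back in (a decaying sum cannot satisfy the expanding branch), is exactly the clean way to do it and avoids the cancellation issue you flag. The only cosmetic point is that in the last paragraph you could state explicitly that the infimum characterization of $\underline{\beta}_{A}$ gives the estimate for every $\gamma<\alpha$, which is why you can take rate $\alpha/2$; you do this implicitly and it is fine.
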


The negation of Lemma \ref{lem:BD-characterization}\emph{(iii)} yields the following useful criterion for the non-existence of a Bohl dichotomy.

\begin{corollary}[Criterion for non-existence of Bohl dichotomy]
\label{cor:notBD}
System \eqref{1} has no Bohl dichotomy if and only if there exists an $x_{0}\in \mathbb{R}^{d}\setminus \{0\}$ such that
\begin{equation*}
\underline{\beta }_{A}(x_{0})\leq 0\leq \overline{\beta }_{A}(x_{0}).
\end{equation*}%
\end{corollary}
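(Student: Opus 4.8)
The corollary is advertised as the negation of Lemma~\ref{lem:BD-characterization}\emph{(iii)}, so the plan is simply to make that equivalence precise. By Lemma~\ref{lem:BD-characterization}, system \eqref{1} has \emph{no} Bohl dichotomy if and only if statement \emph{(iii)} fails, i.e.\ if and only if for every $\alpha>0$ there is some $x_0\neq 0$ (which we may normalize to $\|x_0\|=1$) with $\overline{\beta}_A(x_0)>-\alpha$ \emph{and} $\underline{\beta}_A(x_0)<\alpha$. The direction ``$\Leftarrow$'' of the corollary is then immediate: if there is an $x_0$ with $\underline{\beta}_A(x_0)\le 0\le\overline{\beta}_A(x_0)$, then for \emph{every} $\alpha>0$ one has $\overline{\beta}_A(x_0)\ge 0>-\alpha$ and $\underline{\beta}_A(x_0)\le 0<\alpha$, so this single $x_0$ already witnesses the failure of \emph{(iii)}, hence of the Bohl dichotomy.

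For the direction ``$\Rightarrow$'' I would argue by compactness. Assuming no Bohl dichotomy, apply the failure of \emph{(iii)} with $\alpha=1/k$ to obtain unit vectors $x_k$ with $\overline{\beta}_A(x_k)>-1/k$ and $\underline{\beta}_A(x_k)<1/k$; since the unit sphere of $\mathbb{R}^d$ is compact, pass to a subsequence with $x_k\to x_\ast$, $\|x_\ast\|=1$, and aim to show $\underline{\beta}_A(x_\ast)\le 0\le\overline{\beta}_A(x_\ast)$. It is convenient to package this via $h(x_0):=\max\{-\overline{\beta}_A(x_0),\underline{\beta}_A(x_0)\}$, which is invariant under positive rescaling of $x_0$: statement \emph{(iii)} says exactly that $\inf_{\|x_0\|=1}h(x_0)>0$, its negation says this infimum is $\le 0$, and the assertion of the corollary is that this infimum is then \emph{attained} at some $x_0$ with $h(x_0)\le 0$.

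The main obstacle is that the Bohl-exponent functions $x_0\mapsto\overline{\beta}_A(x_0)$ and $x_0\mapsto\underline{\beta}_A(x_0)$ are \emph{not} continuous --- and, crucially, not semicontinuous in the direction that would make ``$h$ attains its infimum'' automatic: an arbitrarily small change of $x_0$ can introduce a component along a faster solution that eventually dominates and changes the exponents discontinuously, which is precisely the mechanism exploited by the Millionshikov method elsewhere in the paper. Hence one cannot merely pass the inequalities $\overline{\beta}_A(x_k)>-1/k$, $\underline{\beta}_A(x_k)<1/k$ to the limit; this passage to the limit, exploiting the specific structure of the Bohl exponents, is the technical heart of the proof. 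I would attempt it by contradiction: if, say, $\overline{\beta}_A(x_\ast)<0$, then for some index $N_0$ the reference solution $x(\cdot,x_\ast)$ decays at a uniform exponential rate on $\{(n,m):n>m>N_0\}$, and writing $x(n,x_k)=x(n,x_\ast)+\Phi_A(n,0)(x_k-x_\ast)$ one tracks that $x(\cdot,x_k)$ shadows this decaying solution on a long (but finite, $k$-dependent) window before the perturbation term governs its behaviour; combining the two regimes with $\overline{\beta}_A(x_k)>-1/k$ and $\underline{\beta}_A(x_k)<1/k$ used \emph{simultaneously} should force a contradiction for large $k$, and the case $\underline{\beta}_A(x_\ast)>0$ is symmetric. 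Everything outside this limiting step is routine bookkeeping with the definitions of $\overline{\beta}_A$ and $\underline{\beta}_A$.
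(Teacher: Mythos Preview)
The paper gives no proof of this corollary: it is quoted from \cite{CzornikEtal2023} and the sentence ``The negation of Lemma~\ref{lem:BD-characterization}\emph{(iii)} yields the following useful criterion'' is the entire justification offered here. You are right that negating \emph{(iii)} \emph{as literally stated} produces only a $\forall\alpha\,\exists x_0$ assertion, not the $\exists x_0$ claimed in the corollary, so there is a quantifier swap to account for. The way this is handled in the reference, however, is not via compactness on the sphere. Rather, the proof of Lemma~\ref{lem:BD-characterization} in \cite{CzornikEtal2023} establishes that Bohl dichotomy is already equivalent to the \emph{pointwise} (non-uniform) condition
\[
\text{for every } x_0\in\mathbb{R}^d\setminus\{0\}:\quad \overline{\beta}_A(x_0)<0\ \text{ or }\ \underline{\beta}_A(x_0)>0,
\]
the uniform $\alpha$ in \emph{(iii)} coming out of the splitting $L_1\oplus L_2$ in \emph{(ii)} once that splitting is constructed. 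The corollary is then the literal negation of this pointwise statement, with no limiting argument needed.

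Your compactness route, by contrast, has a genuine gap at exactly the point you flag. The Bohl exponents are discontinuous in the wrong direction, and your contradiction sketch does not close. Concretely, nothing prevents the limit point $x_\ast$ from being a ``wrong'' vector: take $d=2$ with $\overline{\beta}_A(e_1)<0$ and $e_2$ a vector with $\underline{\beta}_A(e_2)\le 0\le\overline{\beta}_A(e_2)$; then $x_k\coloneqq (e_1+\tfrac{1}{k}e_2)/\lVert e_1+\tfrac{1}{k}e_2\rVert$ can satisfy $\overline{\beta}_A(x_k)>-\tfrac{1}{k}$ and $\underline{\beta}_A(x_k)<\tfrac{1}{k}$ (the $e_2$-component eventually governs both exponents), yet $x_k\to e_1$, for which $\overline{\beta}_A(e_1)<0$. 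The witness the corollary demands is $e_2$, not the compactness limit $e_1$. Your ``two regimes'' shadowing idea does not rule this out: the time-windows on which $x_k$ fails to decay can drift to infinity with $k$, out of reach of any finite shadowing interval, and invoking $\underline{\beta}_A(x_k)<\tfrac{1}{k}$ simultaneously does not pin them down. So the argument as outlined would not go through; the swap of quantifiers is absorbed into the structural proof of Lemma~\ref{lem:BD-characterization}, not recovered afterwards by a limit.
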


\begin{lemma}[Characterization of exponential dichotomy]\label{lem:ED-characterization}
The following statements are equivalent:

(i) System (\ref{1}) has an exponential dichotomy.

(ii) There exists a splitting $L_{1}\oplus L_{2}=\mathbb{R}^{d}$ with
\begin{equation*}
\overline{\beta }_{A}(L_{1})<0\qquad \text{and}\qquad \underline{\beta }%
_{A}(L_{2})>0.
\end{equation*}
Moreover, if system \eqref{1} has an exponential dichotomy with splitting $L_1 \oplus L_2 = \mathbb R^d$, then statement (ii) holds with that splitting.
\end{lemma}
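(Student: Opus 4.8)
The plan is to read the equivalence off directly from the two sets of definitions: an exponential dichotomy on a \emph{fixed} splitting is exactly negativity of the upper Bohl exponent on $L_1$ together with positivity of the lower Bohl exponent on $L_2$, and the Lyapunov hypothesis on $A$ is needed only to convert ``eventual'' estimates, which is all the Bohl exponents see, into estimates valid from time $m=0$ onwards.

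For \emph{(i)$\Rightarrow$(ii)} — which simultaneously delivers the ``moreover'' assertion, since the splitting is never modified — I would substitute the defining inequalities \eqref{Dich1} and \eqref{Dich2} of an exponential dichotomy with constants $K,\alpha$ into the definitions of $\overline\beta_A$ and $\underline\beta_A$. For $x_0\in L_1\setminus\{0\}$ and $n-m>N$, $m>N$ this gives $\frac1{n-m}\ln\frac{\|x(n,x_0)\|}{\|x(m,x_0)\|}\le\frac{\ln K}{n-m}-\alpha$; taking the supremum over such $x_0$ and over the admissible pairs $(n,m)$ and then letting $N\to\infty$ yields $\overline\beta_A(L_1)\le-\alpha<0$, and the computation leading to $\underline\beta_A(L_2)\ge\alpha>0$ from \eqref{Dich2} is symmetric.

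For \emph{(ii)$\Rightarrow$(i)} I would put $a:=\max\{\|A\|_\infty,\|A^{-1}\|_\infty,1\}<\infty$ (finite since $A$ is a Lyapunov sequence), so that $\|\Phi_A(n,m)\|\le a^{n-m}$ and $\|\Phi_A(m,n)\|\le a^{n-m}$ for all $n\ge m$. Since $\overline\beta_A(L_1)<0$, the definition of the upper Bohl exponent provides, for $\alpha_1:=-\tfrac12\overline\beta_A(L_1)>0$, an $N_1\in\mathbb N$ with $\|x(n,x_0)\|\le\mathrm e^{-\alpha_1(n-m)}\|x(m,x_0)\|$ for all $x_0\in L_1$ and all $n-m>N_1$, $m>N_1$. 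The remaining step is to upgrade this to a bound valid for all $n\ge m\ge0$ by a finite case distinction: if $m\le N_1$ or $n-m\le N_1$, then $(n,m)$ either already has $n-m$ bounded by $2N_1+1$, or can be reduced to the region already controlled by first stepping up to time $N_1+1$ via the crude bound $\|x(N_1+1,x_0)\|\le a^{N_1+1-m}\|x(m,x_0)\|$; in every subcase the bounded factor $a^{n-m}\mathrm e^{\alpha_1(n-m)}$ is absorbed into a single constant $K_1$, which establishes \eqref{Dich1}. The mirror-image argument for $L_2$, using $\alpha_2:=\tfrac12\underline\beta_A(L_2)>0$, some $N_2$, and the lower estimate $\|x(m,x_0)\|\le a^{n-m}\|x(n,x_0)\|$, establishes \eqref{Dich2} with a constant $K_2$; then $\alpha:=\min\{\alpha_1,\alpha_2\}$ and $K:=\max\{K_1,K_2\}$ give an exponential dichotomy on the same splitting $L_1\oplus L_2$.

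I expect the only genuine work to be the bookkeeping in the upgrade step of \emph{(ii)$\Rightarrow$(i)}: the Bohl-exponent estimate says nothing about the ``initial'' pairs $(n,m)$ with $m\le N_1$ or the ``short'' pairs with $n-m\le N_1$, so one must verify that these are parametrized by finitely many bounded quantities on which the crude Lyapunov growth bound can be turned into an exponential estimate with one uniform constant. Nothing deep happens there, but it is the step most prone to slips; the rest is a routine transcription between the definition of exponential dichotomy and those of the Bohl exponents.
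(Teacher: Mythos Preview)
Your argument is correct, but there is nothing in the paper to compare it to: the paper does not prove this lemma at all. It is introduced by the sentence ``We quote two lemmas and a corollary from \cite{CzornikEtal2023}\ldots'' and is simply cited from that reference without proof. So your proposal supplies a self-contained argument where the paper only gives a citation.

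On the substance: both implications go through exactly as you sketch. For \emph{(i)$\Rightarrow$(ii)} the bound $\frac{\ln K}{n-m}-\alpha$ passes directly through the supremum over $x_0$ and over admissible $(n,m)$, and the infimum over $N$ kills the $\frac{\ln K}{n-m}$ term; this also settles the ``moreover'' clause since you never alter the splitting. For \emph{(ii)$\Rightarrow$(i)} your case split is sound: the Bohl-exponent estimate covers $n-m>N_1$ and $m>N_1$, and the remaining pairs either satisfy $n-m\le 2N_1+1$ (crude bound $a^{n-m}$ suffices) or have $m\le N_1<n-N_1-1$, in which case one step of length at most $N_1+1$ via the Lyapunov bound followed by the Bohl estimate on $[N_1+1,n]$ gives a uniform constant of order $(a\mathrm e^{\alpha_1})^{2N_1+2}$. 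The $L_2$ side is symmetric using $\|\Phi_A(m,n)\|\le a^{n-m}$.
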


If system \eqref{1} has an exponential dichotomy then on the associated splitting $L_1 \oplus L_2 = \mathbb{R}^d$ the Bohl exponents have additional uniformity properties. We formulate this result in the special case of a trivial splitting $L_1 \oplus L_2 = \mathbb{R}^d \oplus \{0\}$ or $L_1 \oplus L_2 = \{0\} \oplus \mathbb{R}^d$.

\begin{lemma}[Bohl exponents for trivial exponential dichotomy]
\label{sup_ED}
Suppose that system \eqref{1} has an exponential dichotomy.

(i) If $\sup_{x_{0}\in \mathbb{R}^{d}\setminus \{0\}}\overline{\beta }%
_{A}(x_{0})<0,~$then $\overline{\beta }_{A}(\mathbb{R}^{d})<0$.

(ii) If $\inf_{x_{0}\in \mathbb{R}^{d}\setminus \{0\}}\underline{\beta }%
_{A}(x_{0})>0$, then $\underline{\beta }_{A}(\mathbb{R}^{d})>0$.
\end{lemma}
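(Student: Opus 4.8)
The plan is to prove statement (i); statement (ii) follows by applying (i) to the reversed system (or by an analogous argument). Assume system \eqref{1} has an exponential dichotomy with splitting $L_1 \oplus L_2 = \mathbb{R}^d$, and assume $\sup_{x_0 \in \mathbb{R}^d \setminus \{0\}} \overline{\beta}_A(x_0) < 0$. The first step is to observe that this supremum condition forces the splitting to be trivial in the sense $L_2 = \{0\}$, i.e.\ $L_1 = \mathbb{R}^d$. Indeed, by Lemma~\ref{lem:ED-characterization}(ii) applied to the given splitting, $\underline{\beta}_A(L_2) > 0$; but if $L_2 \neq \{0\}$ we could pick $x_0 \in L_2 \setminus \{0\}$ with $\underline{\beta}_A(x_0) \geq \underline{\beta}_A(L_2) > 0$, and then from the definitions $\overline{\beta}_A(x_0) \geq \underline{\beta}_A(x_0) > 0$, contradicting the hypothesis that all $\overline{\beta}_A(x_0) < 0$. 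Hence $L_1 = \mathbb{R}^d$, and Lemma~\ref{lem:ED-characterization}(ii) (``moreover'' part) gives $\overline{\beta}_A(\mathbb{R}^d) = \overline{\beta}_A(L_1) < 0$, which is exactly the desired conclusion.

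So the real content is just the first step: ruling out a nontrivial $L_2$. The key inequality used there, $\overline{\beta}_A(x_0) \geq \underline{\beta}_A(x_0)$ for every $x_0 \neq 0$, is immediate from the definitions of the upper and lower Bohl exponents, since for each admissible pair $(n,m)$ the quantity $\tfrac{1}{n-m}\ln\tfrac{\|x(n,x_0)\|}{\|x(m,x_0)\|}$ lies between the inner infimum and the inner supremum, and then $\inf$-of-$\sup$ dominates $\sup$-of-$\inf$. With $L_2 = \{0\}$ in hand, the conclusion is not a genuine limiting/uniformity statement at all — it is simply the identification of $\overline{\beta}_A(\mathbb{R}^d)$ with $\overline{\beta}_A(L_1)$ via the ``moreover'' clause of Lemma~\ref{lem:ED-characterization}.

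For statement (ii), one runs the mirror-image argument: $\inf_{x_0} \underline{\beta}_A(x_0) > 0$ forces $L_1 = \{0\}$ (otherwise some $x_0 \in L_1 \setminus \{0\}$ would satisfy $\underline{\beta}_A(x_0) \leq \overline{\beta}_A(x_0) \leq \overline{\beta}_A(L_1) < 0$, contradicting positivity of the infimum), hence $L_2 = \mathbb{R}^d$ and $\underline{\beta}_A(\mathbb{R}^d) = \underline{\beta}_A(L_2) > 0$ by the ``moreover'' part of Lemma~\ref{lem:ED-characterization}.

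I do not anticipate a serious obstacle here: the lemma is essentially a bookkeeping consequence of the characterization in Lemma~\ref{lem:ED-characterization} together with the trivial comparison $\underline{\beta}_A(x_0) \le \overline{\beta}_A(x_0)$. The only point demanding a little care is making sure the ``moreover'' clause of Lemma~\ref{lem:ED-characterization} is applicable — i.e.\ that we are entitled to conclude the subspace-level Bohl exponent bounds hold \emph{for the specific splitting} we have identified as trivial — but that is precisely what that clause provides, so the argument closes.
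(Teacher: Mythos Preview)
Your proposal is correct and follows essentially the same approach as the paper: both arguments show that the hypothesis forces the exponential-dichotomy splitting to be trivial ($L_1=\mathbb{R}^d$, respectively $L_2=\mathbb{R}^d$), and then read off the conclusion from the ED estimate on that subspace. The only cosmetic difference is that the paper rules out a nontrivial $L_2$ by observing that $\overline{\beta}_A(x_0)<0$ for all $x_0$ makes every solution tend to zero (which is incompatible with the growth estimate \eqref{Dich2} on $L_2$), whereas you phrase the same contradiction directly in terms of the inequality $\overline{\beta}_A(x_0)\ge\underline{\beta}_A(x_0)\ge\underline{\beta}_A(L_2)>0$.
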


\begin{proof}
Let $L_1 \oplus L_2 = \mathbb{R}^d$ denote the splitting of the assumed exponential dichotomy of system \eqref{1}.
If $\sup_{x_{0}\in \mathbb{R}^{d}\setminus \{0\}}\overline{\beta }%
_{A}(x_{0})<0$, then
\begin{equation*}
\underset{n\rightarrow \infty }{\lim }\Phi_A (n,0)x_{0}=0
\end{equation*}%
for each $x_{0}\in \mathbb{R}^{d}$. This implies that for subspace $L_{1}$ from
the definition of ED we have $L_{1}=\mathbb{R}^{d}$ and it implies that $%
\overline{\beta }_{A}(\mathbb{R}^{d})<0$. The proof of the second statement
is analogous.
\end{proof}

\section{Applying the Millionshikov rotation method}

In this section we prepare those arguments for the proof of our main result which involve the Millionshikov rotation method.

\begin{lemma}[Exponential growth on subsequence via Bohl exponent]\hfill
\label{lem:double-sequence} Let $B\in \mathcal{L}^{\mathrm{Lya}}(\mathbb{N},%
\mathbb{R}^{k\times k})$ with $\overline{\beta }_{B}(\mathbb{R}^{k})\geq 0$
and $(\varepsilon _{\ell })_{\ell \in \mathbb{N}}$ be a decreasing null
sequence of positive numbers. Then there is $\big((s_{\ell },\tau _{\ell })%
\big)_{\ell \in \mathbb{N}}$ in $\mathbb{N}\times \mathbb{N}$, with
\begin{equation*}
\tau _{0}\geq 2,\qquad \tau _{\ell }<s_{\ell }<\tau _{\ell +1},\qquad
\lim_{\ell \rightarrow \infty }(s_{\ell }-\tau _{\ell })=\infty ,
\end{equation*}%
and
\begin{equation*}
\Vert \Phi _{B}(s_{\ell },\tau _{\ell })\Vert \geq \mathrm{e}^{-\varepsilon
_{\ell }(s_{\ell }-\tau _{\ell })},\qquad \ell \in \mathbb{N}.
\end{equation*}
\end{lemma}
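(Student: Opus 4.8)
The plan is to rewrite $\overline{\beta}_B(\mathbb{R}^k)$ as an exponential growth rate of the operator norm of the transition matrix and then to extract the required double sequence by a routine recursion. First I would exploit that all coefficients $B(n)$, and hence $\Phi_B(m,0)$, are invertible: as $x_0$ runs through $\mathbb{R}^k\setminus\{0\}$, the vector $y:=\Phi_B(m,0)x_0 = x(m,x_0)$ runs through all of $\mathbb{R}^k\setminus\{0\}$, while $x(n,x_0)=\Phi_B(n,m)\,y$. Since the spectral norm is the operator norm induced by the Euclidean norm, this shows that for every pair $n>m$ the inner supremum occurring in the definition of the upper Bohl exponent equals $\frac{1}{n-m}\ln\|\Phi_B(n,m)\|$. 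Substituting this identity into that definition yields
\[
\overline{\beta}_B(\mathbb{R}^k)=\inf_{N\in\mathbb{N}}\ \sup_{\substack{n,m\in\mathbb{N}\\ n-m>N,\ m>N}}\frac{1}{n-m}\ln\|\Phi_B(n,m)\|.
\]

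Next, since $\overline{\beta}_B(\mathbb{R}^k)$, being an infimum, is a lower bound for the quantities over which that infimum is taken, and $\overline{\beta}_B(\mathbb{R}^k)\ge 0$, every single tail supremum in the last display is nonnegative: for each $N\in\mathbb{N}$,
\[
\sup_{\substack{n,m\in\mathbb{N}\\ n-m>N,\ m>N}}\frac{1}{n-m}\ln\|\Phi_B(n,m)\|\ \ge\ 0.
\]
Hence for every $N\in\mathbb{N}$ and every $\delta>0$ there exist $m,n\in\mathbb{N}$ with $n-m>N$, $m>N$ and $\|\Phi_B(n,m)\|\ge\mathrm{e}^{-\delta(n-m)}$.

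Finally I would construct $\big((s_\ell,\tau_\ell)\big)_{\ell\in\mathbb{N}}$ recursively. Put $N_0:=2$, and once $s_{\ell-1}$ has been chosen put $N_\ell:=\max\{s_{\ell-1},\ell\}$ for $\ell\ge 1$. Applying the previous observation with $N=N_\ell$ and $\delta=\varepsilon_\ell$ furnishes $m,n\in\mathbb{N}$ with $n-m>N_\ell$, $m>N_\ell$ and $\|\Phi_B(n,m)\|\ge\mathrm{e}^{-\varepsilon_\ell(n-m)}$; set $\tau_\ell:=m$ and $s_\ell:=n$. Then $\tau_0>N_0=2$; for $\ell\ge 1$ one has $\tau_\ell>N_\ell\ge s_{\ell-1}$, so $s_{\ell-1}<\tau_\ell$; and $\tau_\ell<s_\ell$ in every step. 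Together these give $\tau_\ell<s_\ell<\tau_{\ell+1}$ for all $\ell$. Moreover $s_\ell-\tau_\ell=n-m>N_\ell\ge\ell$, so $s_\ell-\tau_\ell\to\infty$, and by construction $\|\Phi_B(s_\ell,\tau_\ell)\|\ge\mathrm{e}^{-\varepsilon_\ell(s_\ell-\tau_\ell)}$, which is the asserted estimate.

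The substantive point — and the one I view as the crux — is the first step: collapsing the inner supremum over initial conditions to $\ln\|\Phi_B(n,m)\|$, which is exactly where invertibility of the coefficient sequence enters, and then correctly reading off from $\overline{\beta}_B(\mathbb{R}^k)\ge 0$ that \emph{every} tail supremum, not merely their infimum, is nonnegative. Everything after that is bookkeeping; in fact the hypotheses that $(\varepsilon_\ell)_{\ell\in\mathbb{N}}$ is decreasing and a null sequence are not needed for this lemma, only positivity of the $\varepsilon_\ell$.
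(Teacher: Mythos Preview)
Your proof is correct and follows essentially the same route as the paper's: both collapse the inner supremum in the definition of $\overline{\beta}_B(\mathbb{R}^k)$ to $\frac{1}{n-m}\ln\|\Phi_B(n,m)\|$ via invertibility of $\Phi_B(m,0)$, deduce that every tail supremum is nonnegative, and then construct $(\tau_\ell,s_\ell)$ recursively (the paper uses $N=s_{\ell-1}+1$ where you use $N_\ell=\max\{s_{\ell-1},\ell\}$, a purely cosmetic difference). Your side remark that only positivity of the $\varepsilon_\ell$ is actually used here is also correct.
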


\begin{proof}
For $m,n \in \mathbb N$ with $n - m > 0$, it follows from $\Phi_B(n,0) = \Phi_B(n,m)\Phi_B(m,0)$ and from $\Phi_B(m,0)$ being invertible that
\begin{equation*}
    \sup \bigg\{\frac{1}{n-m}\ln\frac{\Vert\Phi_{B}(n,0)x_0\Vert}{\Vert\Phi_{B}(m,0)x_0\Vert} : x_0 \in \mathbb R^k\setminus\{0\}\bigg\}
    =
    \frac{1}{n-m}\ln \Vert\Phi_{B}(n,m)\Vert.
\end{equation*}
Hence for every $N \in \mathbb N$,
\begin{align*}
    0 \leq \overline\beta_B(\mathbb R^k) \leq \sup_{n-m>N,\,m>N} \frac{1}{n-m}\ln \Vert\Phi_{B}(n,m)\Vert.
\end{align*}
Hence for $\varepsilon > 0$ and every $N \in \mathbb N$, there are $m_{N,\varepsilon}, n_{N,\varepsilon} \in \mathbb N$ with
\begin{align*}
    &n_{N,\varepsilon} - m_{N,\varepsilon} > N, \qquad m_{N,\varepsilon} > N,
    \\
    &\frac{1}{n_{N,\varepsilon}-m_{N,\varepsilon}}\ln \Vert\Phi_{B}(n_{N,\varepsilon},m_{N,\varepsilon})\Vert > -\varepsilon.
\end{align*}
We define the sequences $\big((s_\ell,\tau_\ell)\big)_{\ell\in\mathbb N}$ recursively by setting
\begin{equation*}
    \tau_0 \coloneqq m_{2,\varepsilon_0}, \qquad s_0 \coloneqq n_{2,\varepsilon_0},
\end{equation*}
and for $\ell \in \mathbb N$ with $\ell \geq 1$ by setting
\begin{equation*}
    \tau_\ell \coloneqq m_{s_{\ell-1}+1,\varepsilon_\ell}, \qquad s_\ell \coloneqq n_{s_{\ell-1}+1,\varepsilon_\ell}.
    \qedhere
\end{equation*}
\end{proof}

\begin{lemma}[Exponential decay on subsequence via Bohl exponent]\hfill
\label{lem:double-sequence_lower} Let $B\in \mathcal{L}^{\mathrm{Lya}}(%
\mathbb{N},\mathbb{R}^{k\times k})$ with $\underline{\beta }_{B}(\mathbb{R}%
^{k})\leq -\delta <0$ and $(\varepsilon _{\ell })_{\ell \in \mathbb{N}}$ be
a decreasing null sequence of positive numbers. Then there is $\big((s_{\ell
},\tau _{\ell })\big)_{\ell \in \mathbb{N}}$ in $\mathbb{N}\times \mathbb{N}$%
, with
\begin{equation}
\tau _{0}\geq 2,\qquad \tau _{\ell }<s_{\ell }<\tau _{\ell +1},\qquad
\lim_{\ell \rightarrow \infty }(s_{\ell }-\tau _{\ell })=\infty ,\qquad \ell
\in \mathbb{N}, \label{_0}
\end{equation}
\begin{equation}
   \frac{1}{s_\ell-\tau_\ell} \ln\left(\frac 2{\sin\varepsilon
_\ell}\right) <\varepsilon_\ell,\qquad \ell \in \mathbb N, \label{_1}
\end{equation}
and
\begin{equation}
\left\Vert \Phi _{B}(\tau _{\ell },s_{\ell })\right\Vert ^{-1}\leq
\mathrm{e}^{\left( -\delta +\varepsilon _{\ell }\right) (s_{\ell }-\tau
_{\ell })},\qquad \ell \in \mathbb{N}. \label{_2}
\end{equation}
\end{lemma}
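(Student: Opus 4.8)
The plan is to imitate the proof of Lemma~\ref{lem:double-sequence}, replacing the operator-norm identity used there by its ``smallest singular value'' counterpart, and then to inflate the cut-off indices in the recursion so that the extra requirement \eqref{_1} becomes automatic.

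First I would record a norm identity. For $m,n\in\mathbb N$ with $m<n$ the map $x_0\mapsto x(m,x_0)=\Phi_B(m,0)x_0$ is a linear bijection of $\mathbb R^k$, so, substituting $y=\Phi_B(m,0)x_0$ and using $x(n,x_0)=\Phi_B(n,m)x(m,x_0)$,
\[
\inf_{x_0\in\mathbb R^k\setminus\{0\}}\frac{1}{n-m}\ln\frac{\Vert x(n,x_0)\Vert}{\Vert x(m,x_0)\Vert}
=\frac{1}{n-m}\ln\inf_{y\in\mathbb R^k\setminus\{0\}}\frac{\Vert\Phi_B(n,m)y\Vert}{\Vert y\Vert}
=-\frac{1}{n-m}\ln\Vert\Phi_B(m,n)\Vert,
\]
because the infimum of $\Vert\Phi_B(n,m)y\Vert/\Vert y\Vert$ over $y\neq0$ is the smallest singular value of $\Phi_B(n,m)$, which equals $\Vert\Phi_B(n,m)^{-1}\Vert^{-1}=\Vert\Phi_B(m,n)\Vert^{-1}$. (Here I use that $\ln$ and multiplication by $\tfrac1{n-m}>0$ are increasing, hence commute with the infimum.)

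Next I would read off the hypothesis. By definition $\underline{\beta}_B(\mathbb R^k)$ is the supremum over $N\in\mathbb N$ of the quantities $\inf_{n-m>N,\,m>N}\inf_{x_0\neq0}\tfrac1{n-m}\ln\frac{\Vert x(n,x_0)\Vert}{\Vert x(m,x_0)\Vert}$, so $\underline{\beta}_B(\mathbb R^k)\le-\delta$ forces each of these quantities to be $\le-\delta$; combined with the identity above, for every $N\in\mathbb N$ and every $\varepsilon>0$ there are $m_{N,\varepsilon}<n_{N,\varepsilon}$ in $\mathbb N$ with $n_{N,\varepsilon}-m_{N,\varepsilon}>N$, $m_{N,\varepsilon}>N$, and $\tfrac1{n_{N,\varepsilon}-m_{N,\varepsilon}}\ln\Vert\Phi_B(m_{N,\varepsilon},n_{N,\varepsilon})\Vert>\delta-\varepsilon$, equivalently $\Vert\Phi_B(m_{N,\varepsilon},n_{N,\varepsilon})\Vert^{-1}<\mathrm e^{(-\delta+\varepsilon)(n_{N,\varepsilon}-m_{N,\varepsilon})}$.

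Finally I would build the double sequence recursively, exactly as in Lemma~\ref{lem:double-sequence} but with the cut-offs enlarged. Writing $R_\ell:=\tfrac1{\varepsilon_\ell}\ln\!\bigl(\tfrac2{\sin\varepsilon_\ell}\bigr)>0$ (assuming, as \eqref{_1} implicitly does, that $\varepsilon_\ell\in(0,\pi)$ for all $\ell$), set $N_0:=\lceil\max\{2,R_0\}\rceil$, $\tau_0:=m_{N_0,\varepsilon_0}$, $s_0:=n_{N_0,\varepsilon_0}$, and for $\ell\ge1$ set $N_\ell:=\lceil\max\{s_{\ell-1},R_\ell\}\rceil$, $\tau_\ell:=m_{N_\ell,\varepsilon_\ell}$, $s_\ell:=n_{N_\ell,\varepsilon_\ell}$. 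Then $\tau_\ell>N_\ell\ge s_{\ell-1}$ (and $\tau_0>N_0\ge2$), so $\tau_0\ge2$ and $\tau_{\ell-1}<s_{\ell-1}<\tau_\ell<s_\ell$; since $(s_\ell)$ is therefore strictly increasing, $s_\ell-\tau_\ell>N_\ell\ge s_{\ell-1}\to\infty$, which gives \eqref{_0}; from $s_\ell-\tau_\ell>N_\ell\ge R_\ell$ one gets \eqref{_1}; and applying the previous paragraph with $(N,\varepsilon)=(N_\ell,\varepsilon_\ell)$ gives \eqref{_2} (in fact with strict inequality). The only ingredients that are not a verbatim transcription of Lemma~\ref{lem:double-sequence} are the identity $\sigma_{\min}(M)=\Vert M^{-1}\Vert^{-1}$ used to turn the infimum over directions into a reciprocal operator norm, and the padding of $N_\ell$ by $R_\ell$ needed for \eqref{_1}; neither is a genuine obstacle, so I do not anticipate any serious difficulty.
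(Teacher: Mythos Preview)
Your proof is correct and follows essentially the same route as the paper: the paper records the identity $\inf_{x_0\neq 0}\tfrac1{n-m}\ln\frac{\Vert\Phi_B(n,0)x_0\Vert}{\Vert\Phi_B(m,0)x_0\Vert}=\tfrac1{n-m}\ln\Vert\Phi_B(m,n)\Vert^{-1}$ (derived slightly differently, via $-\sup$, but equivalent to your smallest-singular-value computation) and then refers to the recursive construction of Lemma~\ref{lem:double-sequence}. Your treatment is in fact more complete, since you make explicit the padding of the cut-off indices by $R_\ell=\tfrac1{\varepsilon_\ell}\ln\bigl(\tfrac{2}{\sin\varepsilon_\ell}\bigr)$ needed to secure \eqref{_1}, a point the paper leaves to the reader.
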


\begin{proof}
The proof is similar to Lemma \ref{lem:double-sequence}, using that for $m,n \in \mathbb N$, $n - m \geq 1$,
\begin{align*}
    &\inf \bigg\{\frac{1}{n-m}\ln\frac{\Vert\Phi_{B}(n,0)x_0\Vert}{\Vert\Phi_{B}(m,0)x_0\Vert} : x_0 \in \mathbb R^k\setminus\{0\}\bigg\}
    \\
    &\quad =
    \inf \bigg\{-\frac{1}{n-m}\ln\frac{\Vert\Phi_{B}(m,0)x_0\Vert}{\Vert\Phi_{B}(n,0)x_0\Vert} : x_0 \in \mathbb R^k\setminus\{0\}\bigg\}
    \\
    &\quad =
    -\sup \bigg\{\frac{1}{n-m}\ln\frac{\Vert\Phi_{B}(m,n)\Phi_B(n,0)x_0\Vert}{\Vert\Phi_B(n,0)x_0\Vert} : x_0 \in \mathbb R^k\setminus\{0\}\bigg\}
    \\
    &\quad = \frac{1}{n-m}\ln\Vert\Phi_{B}(m,n)\Vert^{-1}.
\end{align*}
As a consequence, for every $N \in \mathbb N$,
\begin{align*}
    0 < -\delta \leq \underline\beta_B(\mathbb R^k) 
    \leq \inf_{n-m>N,\,m>N} \frac{1}{n-m}\ln \Vert\Phi_{B}(m,n)\Vert^{-1}.
\end{align*}
Hence for \(\varepsilon>0\) and every \(N \in \mathbb N\), there are \(m_{N,\varepsilon},n_{N,\varepsilon} \in \mathbb N\) with
\begin{align*}
    n_{N,\varepsilon} - m_{N,\varepsilon} > \max\bigg\{N,\frac{\ln\big(\frac 2{\sin\varepsilon}\big)}{\varepsilon}\bigg\}, \qquad m_{N,\varepsilon} > N,
    \\
    \frac 1{n_{N,\varepsilon} - m_{N,\varepsilon}} \ln\Vert\Phi_B(m_{N,\varepsilon}, n_{N,\varepsilon})\Vert^{-1} < -\delta + \varepsilon.
\end{align*}
We define the sequences \((s_\ell,\tau_\ell)_{\ell\in\mathbb N}\) recursively by setting
\begin{equation*}
    \tau_0 \coloneqq m_{2,\varepsilon_0},
    \qquad
    s_0 \coloneqq n_{2,\varepsilon_0},
\end{equation*}
and for \(\ell \in \mathbb N\) with \(\ell \geq 1\) by setting
\begin{equation*}
    \tau_\ell \coloneqq m_{s_{\ell-1}+1,\varepsilon_\ell},
    \qquad
    s_\ell \coloneqq n_{s_{\ell-1}+1,\varepsilon_\ell}.
    \qedhere
\end{equation*}
\end{proof}

The following two lemmas assume conditions for upper Bohl exponents assuring the existence of a perturbed system that has a solution with specific Bohl exponents.

We make the following observation first though:

\begin{remark}[\(\mathcal{L}^{\mathrm{Lya}}(\mathbb{N},\mathbb{R}^{k\times k})\) is open]\label{rem:Lya_open}
The set $\mathcal{L}^{\mathrm{Lya}}(\mathbb{N},\mathbb{R}^{k\times k})$ is an open subset of $\mathcal{L}^{\mathrm{\infty }}(\mathbb{N},\mathbb{R}^{k\times k})$.
Indeed, this can be proved for $B \in \mathcal{L}^{\mathrm{Lya}}(\mathbb{N},\mathbb{R}^{k\times k})$, $B' \in \mathcal{L}^{\mathrm{\infty }}(\mathbb{N},\mathbb{R}^{k\times k})$, with $\Vert B - B'\Vert_\infty$ is sufficiently small by
\begin{equation*}
    B'(n) = B(n)\big(I - B(n)^{-1}(B(n) - B'(n))\big),
    \qquad n \in \mathbb N
\end{equation*}
and representing the inverse of $I - B(n)^{-1}(B(n) - B'(n))$ by the Neumann series.
\end{remark}

\begin{lemma}[Perturbation for special solution I]
\label{lem:lya_perturbed}
Let $z_{0}\in \mathbb{R}^{k}\setminus \{0\}$ and $%
B\in \mathcal{L}^{\mathrm{Lya}}(\mathbb{N},\mathbb{R}^{k\times k}),$ $k\geq 2
$ with
\begin{equation*}
\sup_{x_{0}\in \mathbb{R}^{k}\setminus \{0\}}\overline{\beta }%
_{B}(x_{0})<0\qquad \text{and}\qquad \overline{\beta }_{B}(\mathbb{R}%
^{k})\geq 0.
\end{equation*}
Then there exists a $Q\in \mathcal{L}^{\infty }(\mathbb{N},\mathbb{R}%
^{k\times k})$ with

(i) $\lim_{\ell \rightarrow \infty }Q(\ell )=0$,

(ii) $B+Q\in \mathcal{L}^{\mathrm{Lya}}(\mathbb{N},\mathbb{R}^{k\times k})$,

(iii) $\underline{\beta }_{B+Q}(z_{0})<0$ and $\overline{\beta }%
_{B+Q}(z_{0})\geq 0$.
\end{lemma}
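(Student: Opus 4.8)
The plan is to construct the perturbation $Q$ supported on a sequence of pairwise disjoint finite \emph{rotation windows} $W_\ell$, $\ell\in\mathbb N$, whose left endpoints tend to $\infty$, with $Q\equiv0$ off $\bigcup_\ell W_\ell$, and on each $W_\ell$ to use the Millionshikov rotation method from the Appendix to steer the solution $(x(n,z_0))_{n\in\mathbb N}$ of the perturbed equation $x(n+1)=(B+Q)(n)x(n)$, $x(0)=z_0$. The solution should alternate between long stretches on which it barely changes in norm, which will force $\overline\beta_{B+Q}(z_0)\ge0$, and long stretches on which it decays at a fixed exponential rate, which will force $\underline\beta_{B+Q}(z_0)<0$: a stretch of the first kind is produced by steering the current solution direction, just before a window supplied by Lemma~\ref{lem:double-sequence}, onto the almost non-decaying direction that this lemma provides; a stretch of the second kind comes for free on a long interval of pure $B$-evolution, since $\sup_{x_0}\overline\beta_B(x_0)<0$ forces every individual solution of $B$ to decay — and it is the lack of uniformity in this decay, expressed by $\overline\beta_B(\mathbb R^k)\ge0$, that makes the stretches of the first kind available at all. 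As preparation, fix $a\coloneqq-\sup_{x_0\in\mathbb R^k\setminus\{0\}}\overline\beta_B(x_0)>0$, so $\overline\beta_B(x_0)\le-a$ for every $x_0\neq0$, and, using $\overline\beta_B(\mathbb R^k)\ge0$, apply Lemma~\ref{lem:double-sequence} with $\varepsilon_\ell\coloneqq1/(\ell+1)$ to obtain $\big((s_\ell,\tau_\ell)\big)_{\ell\in\mathbb N}$ with $\tau_\ell<s_\ell<\tau_{\ell+1}$, $s_\ell-\tau_\ell\to\infty$, together with, for each $\ell$, a unit vector $v^{(\ell)}$ such that $\Vert\Phi_B(s_\ell,\tau_\ell)v^{(\ell)}\Vert=\Vert\Phi_B(s_\ell,\tau_\ell)\Vert\ge\mathrm e^{-\varepsilon_\ell(s_\ell-\tau_\ell)}$. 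From the Appendix I use that, with $B$ fixed and $k\ge2$, over a window of any sufficiently large length one can realise an arbitrary rotation of the current solution direction by a perturbation supported on that window whose per-step norm can be made arbitrarily small by enlarging the window, and which keeps $B+Q$ bounded with uniformly bounded inverses.

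I then build the data recursively, keeping a time $t_\ell$ and the solution value $\xi_\ell=x(t_\ell,z_0)\neq0$ there, starting from $t_0\coloneqq0$, $\xi_0\coloneqq z_0$. Given $t_\ell$ and $\xi_\ell$, first choose a window length $w_\ell$ so large that the Millionshikov per-step perturbation over a window of length $w_\ell$ is $\le1/(\ell+1)$ and small enough that $B+Q$ retains bounded inverses; then, using $\tau_j\to\infty$, $s_j-\tau_j\to\infty$, $\varepsilon_j\to0$, choose an index $j$ with $\tau_j>t_\ell+w_\ell$, $s_j-\tau_j>\ell$ and $\varepsilon_j<1/(\ell+1)$, and write $p_\ell\coloneqq\tau_j$, $q_\ell\coloneqq s_j$, $v_\ell\coloneqq v^{(j)}$. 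Put $Q\equiv0$ on $[t_\ell,p_\ell-w_\ell)$; on $W_\ell\coloneqq[p_\ell-w_\ell,p_\ell)$ let $Q$ be the Millionshikov rotation arranged so that $x(p_\ell,z_0)=c_\ell v_\ell$ with $c_\ell\coloneqq\Vert\Phi_B(p_\ell,t_\ell)\xi_\ell\Vert>0$; and put $Q\equiv0$ again from $p_\ell$ onward, until the next window. Then $x(q_\ell,z_0)=c_\ell y_\ell$, where $y_\ell\coloneqq\Phi_B(q_\ell,p_\ell)v_\ell$ satisfies $\Vert y_\ell\Vert\ge\mathrm e^{-\varepsilon_j(q_\ell-p_\ell)}$. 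Since $z_\ell\coloneqq\Phi_B(0,q_\ell)y_\ell\neq0$ and $\overline\beta_B(z_\ell)\le-a$, pick $N_\ell\in\mathbb N$ with
\[
\Vert\Phi_B(n,0)z_\ell\Vert\le\mathrm e^{-\frac a2(n-m)}\Vert\Phi_B(m,0)z_\ell\Vert\qquad\text{whenever }n-m>N_\ell\text{ and }m>N_\ell;
\]
then set $m_\ell\coloneqq\max\{q_\ell,N_\ell\}+1$, $n_\ell\coloneqq m_\ell+N_\ell+\ell+1$, keep $Q\equiv0$ on $[q_\ell,n_\ell]$, and let $t_{\ell+1}\coloneqq n_\ell$, $\xi_{\ell+1}\coloneqq x(n_\ell,z_0)$. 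Because $t_\ell+w_\ell<p_\ell<q_\ell<m_\ell<n_\ell=t_{\ell+1}$, the $W_\ell$ are pairwise disjoint finite intervals with $\min W_\ell\to\infty$, $t_\ell\to\infty$, and $p_\ell$, $q_\ell-p_\ell$, $m_\ell$, $n_\ell-m_\ell$ all tend to $\infty$ while $\varepsilon_j\to0$ as $\ell\to\infty$.

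Now I verify the three properties. Since $\Vert Q(n)\Vert\le1/(\ell+1)$ for $n\in W_\ell$, $Q$ vanishes off $\bigcup_\ell W_\ell$, and the $W_\ell$ are disjoint with $\min W_\ell\to\infty$, we get $Q\in\mathcal L^\infty(\mathbb N,\mathbb R^{k\times k})$ and $Q(n)\to0$, which is (i); moreover $B+Q$ is bounded and has uniformly bounded inverses since $Q$ is small on the $W_\ell$ and vanishes elsewhere, so $B+Q\in\mathcal L^{\mathrm{Lya}}(\mathbb N,\mathbb R^{k\times k})$, which is (ii). For (iii) observe that neither $[p_\ell,q_\ell]$ nor $[m_\ell,n_\ell]\subseteq[q_\ell,t_{\ell+1}]$ meets a rotation window, so on the former $x(\cdot,z_0)=\Phi_B(\cdot,p_\ell)(c_\ell v_\ell)$ and on the latter $x(\cdot,z_0)=c_\ell\Phi_B(\cdot,0)z_\ell$; hence
\[
\frac{1}{q_\ell-p_\ell}\ln\frac{\Vert x(q_\ell,z_0)\Vert}{\Vert x(p_\ell,z_0)\Vert}=\frac{\ln\Vert y_\ell\Vert}{q_\ell-p_\ell}\ge-\varepsilon_j,\qquad\frac{1}{n_\ell-m_\ell}\ln\frac{\Vert x(n_\ell,z_0)\Vert}{\Vert x(m_\ell,z_0)\Vert}\le-\tfrac a2.
\]
Fix $N\in\mathbb N$. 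For all large $\ell$ the pair $(q_\ell,p_\ell)$ satisfies $q_\ell-p_\ell>N$, $p_\ell>N$, so the supremum occurring in the definition of $\overline\beta_{B+Q}(z_0)$ with parameter $N$ is $\ge-\varepsilon_j$; letting $\ell\to\infty$ (so $\varepsilon_j\to0$) this supremum is $\ge0$, and as $N$ was arbitrary, $\overline\beta_{B+Q}(z_0)\ge0$. Similarly, for all large $\ell$ the pair $(n_\ell,m_\ell)$ satisfies $n_\ell-m_\ell>N$, $m_\ell>N$, so the infimum occurring in the definition of $\underline\beta_{B+Q}(z_0)$ with parameter $N$ is $\le-a/2$; since this holds for every $N$, $\underline\beta_{B+Q}(z_0)\le-a/2<0$. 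This gives (iii).

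The step I expect to be the main obstacle is the recursive bookkeeping that makes the two conclusions in (iii) coexist: the decay rate $-a/2$ is available on $[m_\ell,n_\ell]$ only because $m_\ell>N_\ell$ and $n_\ell-m_\ell>N_\ell$, yet $N_\ell$ depends on the vector $z_\ell$, which itself is only pinned down once the rotation onto $v_\ell$ has been carried out; this dictates the order in which the data must be fixed — window length $w_\ell$ first, then $p_\ell$ and $q_\ell$, then the rotation, then $z_\ell$ and $N_\ell$, then $m_\ell$ and $n_\ell$, and only then the next window, placed beyond $n_\ell$. The other ingredient that has to be imported, from the Appendix, is precisely the claim that an arbitrary-angle Millionshikov rotation can be spread over a sufficiently long window so that its per-step perturbation is arbitrarily small while the Lyapunov property is preserved; granting that, the estimates above are routine.
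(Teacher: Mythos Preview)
Your overall architecture matches the paper's: build $Q$ recursively so that the solution through $z_0$ alternates between long stretches of near-zero exponential rate (forcing $\overline\beta_{B+Q}(z_0)\ge0$) and long stretches of decay at a fixed negative rate coming from $\sup_{x_0}\overline\beta_B(x_0)<0$ (forcing $\underline\beta_{B+Q}(z_0)<0$). The decay half of your argument is fine and essentially identical to the paper's ``odd $j$'' step.

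The gap is in the growth half. You write that you ``import from the Appendix'' the statement that an \emph{arbitrary-angle} rotation of the current solution direction can be realised over a long window with arbitrarily small per-step perturbation. No such statement is in the Appendix. Theorem~\ref{MRMC1} and Remark~\ref{rem:millionshikov_alg} only provide a \emph{single-point} perturbation $R$ with $\Vert R\Vert\le\varepsilon\Vert B(m)\Vert$ that rotates the solution by an angle at most $\varepsilon$; the conclusion is not that the solution lands on the exact maximiser $v_\ell$ of $\Vert\Phi_B(s_\ell,\tau_\ell)\Vert$, but only that it becomes $\varepsilon$-fast, i.e.
\[
\Vert z(s_\ell)\Vert\;\ge\;\frac{\sin\varepsilon}{2}\,\Vert\Phi_B(s_\ell,\tau_\ell)\Vert\,\Vert z(\tau_\ell)\Vert.
\]
So your step ``let $Q$ on $W_\ell$ be the Millionshikov rotation arranged so that $x(p_\ell,z_0)=c_\ell v_\ell$'' is not justified by the tools available (and the specific value $c_\ell=\Vert\Phi_B(p_\ell,t_\ell)\xi_\ell\Vert$ would in any case require the rotation window to be norm-preserving, which you do not verify).

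The paper avoids this entirely: in its ``even $j$'' step it perturbs at the \emph{single} time $\tau_{\ell_j}-1$ by some $R$ with $\Vert R\Vert\le\varepsilon_j b$, obtaining only the $\tfrac{\sin\varepsilon_j}{2}$-factor, and then absorbs this factor by choosing $\ell_j$ so large that $\tfrac{\sin\varepsilon_j}{2}\ge \mathrm e^{-\varepsilon_j(s_{\ell_j}-\tau_{\ell_j})}$; combined with $\Vert\Phi_B(s_{\ell_j},\tau_{\ell_j})\Vert\ge\mathrm e^{-\varepsilon_{\ell_j}(s_{\ell_j}-\tau_{\ell_j})}$ this yields the growth estimate $\ge\mathrm e^{-(\varepsilon_j+\varepsilon_{\ell_j})(s_{\ell_j}-\tau_{\ell_j})}$. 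Your proof is easily repaired along these lines: drop the window $W_\ell$ and the arbitrary-angle claim, perturb at the single time $p_\ell-1$ with the actual Appendix result, and add to your choice of the index $j$ the harmless extra condition $s_j-\tau_j\ge\varepsilon_j^{-1}\ln\!\big(2/\sin\varepsilon_j\big)$. With that change your argument becomes essentially the paper's.
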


To prove Lemma \ref{lem:lya_perturbed} we apply the Millionshikov Rotation Method in its algebraic form formulated in Remark \ref{rem:millionshikov_alg}(a).
In the proof of Lemma \ref{Lem:10} we will again apply the rotation method, but it is then more convenient to use the equivalent formulation of the rotation method given by Theorem \ref{MRMC1}.

\begin{proof}
We construct recursively a strictly increasing sequence $(T_{j})_{j\in
\mathbb{N}}$ in $\mathbb N$ and $Q\in \mathcal{L}^{\infty }(\mathbb{N},\mathbb{R}^{k\times
k})$ on $[T_{j},T_{j+1}-1]$ for $j\in \mathbb{N}$.
Let $(y(n,y_{0}))_{n\in\mathbb N}$
denote the solution of $y(n+1)=B(n)y(n)$, $n\in \mathbb{N}$, $y(0)=y_{0}$.

Let us fix $-\alpha \in \left( \sup_{x_{0}\in \mathbb{R}^{k}\setminus
\{0\}}\overline{\beta }_{B}(x_{0}),0\right) $.
Then for each $y_{0}\in
\mathbb{R}^{k}$, $\varepsilon >0$ there exists $N(\varepsilon ,y_{0})\in
\mathbb{N}$ such that for all $n,$ $m\in \mathbb{N}$, $n-m>N(\varepsilon
,y_{0})$ we have
\begin{equation}
\frac{\Vert y(n,y_{0})\Vert }{\Vert y(m,y_{0})\Vert }\leq \mathrm{e}^{\left(
-\alpha +\varepsilon \right) (n-m)}.  \label{eq:uniform_new}
\end{equation}

Since \(\mathcal{L}^{\mathrm{Lya}}(\mathbb{N},\mathbb{R}^{k\times k})\) is open by Remark \ref{rem:Lya_open}, let $\varepsilon' > 0$ be, such that $\Vert B - B'\Vert_\infty \leq \varepsilon'$ implies $B' \in \mathcal{L}^{\mathrm{Lya}}(\mathbb{N},\mathbb{R}^{k\times k})$ for $B' \in \mathcal{L}^{\mathrm{\infty }}(\mathbb{N},\mathbb{R}^{k\times k})$.
Let $b:=\max \{\Vert B\Vert _{\infty },\Vert B^{-1}\Vert _{\infty }\}$ and $%
\varepsilon _{\ell }:=\min \left\{\frac{1}{\ell +1}%
,\frac{\varepsilon^\prime}b\right\} $ for $\ell \in \mathbb{N}$. Using the
assumption $\overline{\beta }_{B}(\mathbb{R}^{k})\geq 0$, Lemma \ref%
{lem:double-sequence} yields a sequence $\big((s_{\ell },\tau _{\ell })\big)%
_{\ell \in \mathbb{N}}$ in $\mathbb{N}\times \mathbb{N}$, with $\tau _{0}>2$%
, $\tau _{\ell }<s_{\ell }<\tau _{\ell +1}$, $\lim_{\ell \rightarrow \infty
}(s_{\ell }-\tau _{\ell })=\infty $ and
\begin{equation}
\Vert \Phi _{B}(s_{\ell },\tau _{\ell })\Vert \geq \mathrm{e}^{-\varepsilon
_{\ell }(s_{\ell }-\tau _{\ell })},\qquad \ell \in \mathbb{N}.
\label{eq:transition_B_eps}
\end{equation}%
We define $T_{0}:=0$, $T_{1}:=1$ and $Q(\ell ):=0$ for $\ell \in
\lbrack T_{0},T_{1}-1]=\{0\}$. For $j\in \mathbb{N}$ with $j\geq 1$ assume
that $T_{0},\dots ,T_{j}$ and $Q(0),\dots ,Q(T_{j}-1)$ are defined. To
define $T_{j+1}$ and $Q(\ell )$ for $\ell \in \lbrack T_{j},T_{j+1}-1]$ we
distinguish case (a) $j$ is odd and case (b) $j$ is even.

Case (a) $j$ is odd.
Define
\begin{equation*}
v:=\big(B(T_{j}-1)+Q(T_{j}-1)\big)\cdots \big(B(0)+Q(0)\big)z_{0}\quad \text{%
and}\quad y_{0}:=\Phi _{B}(0,T_{j})v.
\end{equation*}%
Using \eqref{eq:uniform_new} for $y_{0}$ and $\varepsilon
=\varepsilon _{j}$, there exist $\rho _{j},$ $\sigma _{j}\in
\mathbb{N}$, $\rho _{j} > \sigma_j \geq T_j$, $\rho_j -\sigma _{j}>j$ such that
\begin{equation}
\frac{\Vert y(\rho _{j},y_{0})\Vert }{\Vert y(\sigma _{j},y_{0})\Vert }\leq
\mathrm{e}^{\left( -\alpha +\varepsilon _{j}\right) (\rho _{j}-\sigma _{j})}.
\label{eq:odd_perturbed}
\end{equation}
We set $T_{j+1}:=\rho _{j}$ and $Q(\ell ):=0$ for $\ell \in \lbrack
T_{j},T_{j+1}-1]$.

Case (b) $j$ is even.
Since $\lim_{\ell \rightarrow \infty
}\tau _{\ell }=\infty $ and $\lim_{\ell \rightarrow \infty }(s_{\ell }-\tau
_{\ell })=\infty $ there exists an $\ell _{j}\in \mathbb{N}$ with
\begin{equation*}
    \tau_{\ell _{j}}
    \geq
    T_{j}+2
\end{equation*}
and, since \(\lim_{\ell\to\infty}\mathrm e^{-\varepsilon_j(s_{\ell}-\tau_{\ell})} = 0\), with
\begin{equation}
\frac{\sin \varepsilon _{j}}{2}\geq \mathrm{e}^{-\varepsilon _{j}(s_{\ell
_{j}}-\tau _{\ell _{j}})}.  \label{eq:sin_eps}
\end{equation}%
We set $T_{j+1}:=s_{\ell _{j}}$ and $Q(\ell ):=0$ for $\ell \in \lbrack
T_{j},\tau _{\ell _{j}}-2]$. Applying the Millionshikov rotation method
Remark \ref{rem:millionshikov_alg}(a) with $\varepsilon =\varepsilon _{j}$, $m=\tau
_{\ell _{j}}-1$, $n=s_{\ell _{j}}-1$, and
\begin{equation*}
v=\big(B(\tau _{\ell _{j}}-2)+Q(\tau _{\ell _{j}}-2)\big)\cdots \big(%
B(0)+Q(0)\big)z_{0},
\end{equation*}%
yields an $R\in \mathbb{R}^{d\times d}$ with $\Vert R\Vert \leq \varepsilon
_{j}b$, $B(\tau _{\ell _{j}}-1)+R\in \mathrm{GL}(k)$ and
\begin{align*}
\Vert & B(s_{\ell _{j}}-1)\cdots B(\tau _{\ell _{j}})\big(B(\tau _{\ell
_{j}}-1)+R\big)v\Vert \\
& \geq \frac{\sin \varepsilon _{j}}{2}\Vert B(s_{\ell _{j}}-1)\cdots B(\tau
_{\ell _{j}})\Vert \cdot \Vert \big(B(\tau _{\ell _{j}}-1)+R\big)v\Vert .
\end{align*}%
Dividing and using \eqref{eq:transition_B_eps} and \eqref{eq:sin_eps} we
obtain
\begin{equation}
\begin{split}
\frac{\Vert B(s_{\ell _{j}}-1)\cdots B(\tau _{\ell _{j}})\big(B(\tau _{\ell
_{j}}-1)+R\big)v\Vert }{\Vert \big(B(\tau _{\ell _{j}}-1)+R)\big)v\Vert }&
\geq \frac{\sin \varepsilon _{j}}{2}\Vert B(s_{\ell _{j}}-1)\cdots B(\tau
_{\ell _{j}})\Vert \\
& \geq \mathrm{e}^{-\varepsilon _{j}(s_{\ell _{j}}-\tau _{\ell _{j}})}\cdot
\mathrm{e}^{-\varepsilon _{\ell _{j}}(s_{\ell _{j}}-\tau _{\ell _{j}})} \\
& =\mathrm{e}^{-(\varepsilon _{j}+\varepsilon _{\ell _{j}})(s_{\ell
_{j}}-\tau _{\ell _{j}})}.
\end{split}
\label{eq:even_perturbed}
\end{equation}%
We set
\begin{equation*}
Q(\ell ):=%
\begin{cases}
R, & \ell =\tau _{\ell _{j}}-1, \\
0, & \ell \in \lbrack \tau _{\ell _{j}},T_{j+1}-1],%
\end{cases}%
\end{equation*}%
and rewrite \eqref{eq:even_perturbed} as
\begin{equation}
\frac{\Vert \big(B(s_{\ell _{j}}-1)+Q(s_{\ell _{j}}-1)\big)\cdots \big(%
B(0)+Q(0)\big)z_{0}\Vert }{\Vert \big(B(\tau _{\ell _{j}}-1)+Q(\tau _{\ell
_{j}}-1)\big)\cdots \big(B(0)+Q(0)\big)z_{0}\Vert }\geq \mathrm{e}%
^{-(\varepsilon _{j}+\varepsilon _{\ell _{j}})(s_{\ell _{j}}-\tau _{\ell
_{j}})},  \label{eq:even_perturbed2}
\end{equation}%
which ends the discussion of case (b) and the recursive definition of $Q\in
\mathcal{L}^{\infty }(\mathbb{N},\mathbb{R}^{k\times k})$.

For $j\in \mathbb{N}$ and $\ell \in \lbrack T_{j},T_{j+1}-1]$, either $Q(\ell) = 0$ or $\Vert Q(\ell)\Vert \leq \varepsilon_jb$, wich implies $\lim_{\ell\to\infty}Q(\ell) = 0$ and $\Vert Q\Vert \leq \varepsilon'$, wich implies $B + Q \in \mathcal L^{\mathrm{Lya}}(\mathbb N,\mathbb R^{d\times d})$.
By \eqref{eq:odd_perturbed} $\underline{\beta }_{B+Q}(z_{0})\leq -\alpha <0$ .
By \eqref{eq:even_perturbed2} $\overline{\beta }_{B+Q}(z_{0})\geq 0$.
\end{proof}

In a similar way as we have proved Lemma \ref{lem:lya_perturbed}, the following Lemma \ref{lem:lya_perturbed_v2} can be proved.

\begin{lemma}[Perturbation for special solution II]
\label{lem:lya_perturbed_v2} Let $z_{0}\in \mathbb{R}^{k}\setminus \{0\}$
and $B\in \mathcal{L}^{\mathrm{Lya}}(\mathbb{N},\mathbb{R}^{k\times k}),$ $%
k\geq 2$ with
\begin{equation*}
\sup_{x_{0}\in \mathbb{R}^{k}\setminus \{0\}}\overline{\beta }%
_{B}(x_{0})\leq 0\qquad \text{and}\qquad \overline{\beta }_{B}(\mathbb{R}%
^{k})\geq 0.
\end{equation*}%
Then there exists a $Q\in \mathcal{L}^{\infty }(\mathbb{N},\mathbb{R}%
^{k\times k})$ with

(i) $\lim_{\ell \rightarrow \infty }Q(\ell )=0$,

(ii) $B+Q\in \mathcal{L}^{\mathrm{Lya}}(\mathbb{N},\mathbb{R}^{k\times k})$,

(iii) $\underline{\beta }_{B+Q}(z_{0})\leq 0$ and $\overline{\beta }%
_{B+Q}(z_{0})\geq 0$.
\end{lemma}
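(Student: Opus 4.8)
The plan is to rerun the recursive construction from the proof of Lemma~\ref{lem:lya_perturbed} almost verbatim, changing only the single step that exploited the strict sign of $\sup_{x_0}\overline{\beta}_B(x_0)$. As there, set $b:=\max\{\Vert B\Vert_\infty,\Vert B^{-1}\Vert_\infty\}$, choose $\varepsilon'>0$ so that $\Vert B-B'\Vert_\infty\le\varepsilon'$ forces $B'\in\mathcal L^{\mathrm{Lya}}(\mathbb N,\mathbb R^{k\times k})$, put $\varepsilon_\ell:=\min\{\tfrac{1}{\ell+1},\tfrac{\varepsilon'}{b}\}$, and use the hypothesis $\overline{\beta}_B(\mathbb R^k)\ge0$ together with Lemma~\ref{lem:double-sequence} to obtain $\big((s_\ell,\tau_\ell)\big)_{\ell\in\mathbb N}$ with $\Vert\Phi_B(s_\ell,\tau_\ell)\Vert\ge\mathrm e^{-\varepsilon_\ell(s_\ell-\tau_\ell)}$. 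Define $T_0:=0$, $T_1:=1$, $Q(0):=0$, and build $T_{j+1}$ together with $Q$ on $[T_j,T_{j+1}-1]$ recursively, splitting into $j$ even and $j$ odd exactly as before; write $(y(n,y_0))_n$ for the solution of $y(n+1)=B(n)y(n)$ with $y(0)=y_0$.

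For $j$ even nothing changes at all: since that case never used $\sup_{x_0}\overline{\beta}_B(x_0)$, one finds (using $\tau_\ell\to\infty$, $s_\ell-\tau_\ell\to\infty$) an $\ell_j$ with $\tau_{\ell_j}\ge T_j+2$ and $\tfrac{\sin\varepsilon_j}{2}\ge\mathrm e^{-\varepsilon_j(s_{\ell_j}-\tau_{\ell_j})}$, applies the Millionshikov rotation method in the form of Remark~\ref{rem:millionshikov_alg}(a) on $[\tau_{\ell_j}-1,s_{\ell_j}-1]$ to get $R$ with $\Vert R\Vert\le\varepsilon_j b$, sets $T_{j+1}:=s_{\ell_j}$, $Q(\tau_{\ell_j}-1):=R$ and $Q=0$ at the remaining indices of $[T_j,T_{j+1}-1]$, and arrives at \eqref{eq:even_perturbed2}. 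For $j$ odd the only modification is that, in place of fixing $-\alpha\in(\sup_{x_0}\overline{\beta}_B(x_0),0)$, we use $\overline{\beta}_B(y_0)\le\sup_{x_0}\overline{\beta}_B(x_0)\le0$ directly for $y_0:=\Phi_B(0,T_j)v$, $v:=\Phi_{B+Q}(T_j,0)z_0$: for each $\varepsilon>0$ there is $N(\varepsilon,y_0)$ with $\Vert y(n,y_0)\Vert/\Vert y(m,y_0)\Vert\le\mathrm e^{\varepsilon(n-m)}$ whenever $n-m>N(\varepsilon,y_0)$, the constraint $m>N$ being removed by the same routine splitting used in the proof of Lemma~\ref{lem:lya_perturbed}. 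Taking $\varepsilon=\varepsilon_j$ yields $\rho_j>\sigma_j\ge T_j$ with $\rho_j-\sigma_j>j$ and $\Vert y(\rho_j,y_0)\Vert/\Vert y(\sigma_j,y_0)\Vert\le\mathrm e^{\varepsilon_j(\rho_j-\sigma_j)}$ (the analogue of \eqref{eq:odd_perturbed} with $-\alpha+\varepsilon_j$ replaced by $\varepsilon_j$); we set $T_{j+1}:=\rho_j$ and $Q=0$ on $[T_j,T_{j+1}-1]$.

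It then remains to read off the conclusions. On every $[T_j,T_{j+1}-1]$ one has $Q(\ell)=0$ or $\Vert Q(\ell)\Vert\le\varepsilon_j b$, so $\lim_{\ell\to\infty}Q(\ell)=0$ and $\Vert Q\Vert_\infty\le\varepsilon'$, giving (i) and (ii). For (iii): since $Q$ vanishes on $[T_j,T_{j+1}-1]$ for odd $j$, $\Phi_{B+Q}(n,0)z_0=\Phi_B(n,T_j)v=y(n,y_0)$ for $n\in[T_j,T_{j+1}]$, hence $\tfrac{1}{\rho_j-\sigma_j}\ln\tfrac{\Vert\Phi_{B+Q}(\rho_j,0)z_0\Vert}{\Vert\Phi_{B+Q}(\sigma_j,0)z_0\Vert}\le\varepsilon_j$ for all odd $j$; because $\sigma_j\ge T_j\to\infty$ and $\rho_j-\sigma_j>j$, letting $j\to\infty$ through odd indices shows $\inf_{n-m>N,\,m>N}\tfrac{1}{n-m}\ln\tfrac{\Vert\Phi_{B+Q}(n,0)z_0\Vert}{\Vert\Phi_{B+Q}(m,0)z_0\Vert}\le0$ for every $N$, i.e.\ $\underline{\beta}_{B+Q}(z_0)\le0$, while \eqref{eq:even_perturbed2} yields $\overline{\beta}_{B+Q}(z_0)\ge0$ exactly as in Lemma~\ref{lem:lya_perturbed}. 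I do not expect a genuine obstacle: the argument is an almost mechanical transcription, and the sole reason the conclusion is the non-strict inequality $\underline{\beta}_{B+Q}(z_0)\le0$ is that the fixed decay rate $-\alpha$ of Lemma~\ref{lem:lya_perturbed} is here replaced by the null sequence $(\varepsilon_j)$. The one point worth flagging is that one cannot shortcut by applying Lemma~\ref{lem:lya_perturbed} to a rescaled system $B\mathrm e^{-\varepsilon}$ — that would require $\overline{\beta}_B(\mathbb R^k)>0$, whereas here $\overline{\beta}_B(\mathbb R^k)$ may equal $0$ — so the construction genuinely has to be rerun rather than quoted.
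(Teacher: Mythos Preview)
Your proposal is correct and follows precisely the approach the paper intends: the paper does not give a separate proof of Lemma~\ref{lem:lya_perturbed_v2} but simply states that it is proved ``in a similar way'' to Lemma~\ref{lem:lya_perturbed}, and your write-up carries out exactly that modification---replacing the fixed rate $-\alpha$ in the odd step by the null sequence $\varepsilon_j$, which is the sole change needed to pass from the strict hypothesis to the non-strict one and correspondingly from $\underline{\beta}_{B+Q}(z_0)<0$ to $\underline{\beta}_{B+Q}(z_0)\le 0$. Your closing remark that one cannot shortcut via $B\mathrm e^{-\varepsilon}$ (since $\overline{\beta}_B(\mathbb R^k)$ may equal~$0$) is also well taken.
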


The following observation will be useful.

\begin{remark}[Adjusting the norm of the perturbation]
\label{Rem:eps}
Let $B\in \mathcal{L}^{\mathrm{Lya}}(\mathbb{N},\mathbb{R}^{k\times k})$ and $\varepsilon' > 0$, such that $B + Q \in \mathcal{L}^{\mathrm{Lya}}(\mathbb{N},\mathbb{R}^{k\times k})$ if $\Vert Q\Vert_\infty < \varepsilon'$.
Then for $Q \in \mathcal{L}^{\infty }(\mathbb{N},\mathbb{R}^{k\times k})$ with $\Vert Q\Vert < \varepsilon'$ and $\lim_{\ell \rightarrow \infty }Q(\ell )=0$, and for any subspace $L$ of $\mathbb{R}^{k}$ and any $\varepsilon \in (0,\varepsilon')$, we have
\begin{equation*}
\overline{\beta }_{B+Q}(L)=\overline{\beta }_{B+Q_{\varepsilon }}(L)\text{
and }\underline{\beta }_{B+Q}(L)=\underline{\beta }_{B+Q_{\varepsilon }}(L),
\end{equation*}%
where $Q_{\varepsilon }$ is defined for $n \in \mathbb N$ by
\begin{equation*}
Q_{\varepsilon }(n)=\left\{
\begin{array}{l}
Q(n)\text{ if }\left\Vert Q(n)\right\Vert \leq \varepsilon,  \\
0\text{ otherwise.}%
\end{array}%
\right.
\end{equation*}%
This holds because the sequences $Q$ and $Q_{\varepsilon }$ differ only for
finitely many $n$. Also note that $\left\Vert Q_{\varepsilon}\right\Vert \leq \varepsilon$.
\end{remark}

Using Remark \ref{Rem:eps} and Corollary \ref{cor:notBD} we get from Lemmas %
\ref{lem:lya_perturbed} and \ref{lem:lya_perturbed_v2} the following corollary:

\begin{corollary}[Perturbation for special solution]
\label{C1}Under the assumption of Lemma \ref{lem:lya_perturbed} (Lemma \ref%
{lem:lya_perturbed_v2})  for each $\varepsilon >0$ and $z_0 \in \mathbb R^k \setminus \{0\}$ there exists a $%
Q_{\varepsilon }\in \mathcal{L}^{\infty }(\mathbb{N},\mathbb{R}^{k\times k})$
such that $\left\Vert Q_{\varepsilon }\right\Vert _{\infty }\leq \varepsilon
$, $B+Q_{\varepsilon }\in \mathcal{L}^{\mathrm{Lya}}(\mathbb{N},\mathbb{R}%
^{k\times k})$ and
\begin{equation*}
\underline{\beta }_{B+Q_{\varepsilon }}(z_{0})<0
\quad \text{and} \quad
\overline{\beta }%
_{B+Q_{\varepsilon }}(z_{0})\geq 0,
\end{equation*}%
\begin{equation*}
\big(
\underline{\beta }
_{B+Q_{\varepsilon }}(z_{0})\leq 0
\quad \text{and} \quad
\overline{\beta }%
_{B+Q_{\varepsilon }}(z_{0})\geq 0
\big).
\end{equation*}
In both cases $B+Q_{\varepsilon }\notin \mathrm{BD}^{k}.$
\end{corollary}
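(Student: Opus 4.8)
The plan is to feed the perturbation produced by Lemma \ref{lem:lya_perturbed} (resp.\ Lemma \ref{lem:lya_perturbed_v2}) through Remark \ref{Rem:eps}, which shrinks it to sup-norm at most $\varepsilon$ without altering the Bohl exponents of $z_0$, and then to invoke Corollary \ref{cor:notBD}.

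First I would apply Lemma \ref{lem:lya_perturbed} (in the second case Lemma \ref{lem:lya_perturbed_v2}) to obtain a $Q \in \mathcal{L}^\infty(\mathbb{N},\mathbb{R}^{k\times k})$ with $\lim_{\ell\to\infty}Q(\ell)=0$, $B+Q\in\mathcal{L}^{\mathrm{Lya}}(\mathbb{N},\mathbb{R}^{k\times k})$, and $\underline{\beta}_{B+Q}(z_0)<0$ (resp.\ $\le 0$) together with $\overline{\beta}_{B+Q}(z_0)\ge 0$. As observed in the proof of Lemma \ref{lem:lya_perturbed}, $\mathcal{L}^{\mathrm{Lya}}(\mathbb{N},\mathbb{R}^{k\times k})$ is open in $\mathcal{L}^\infty(\mathbb{N},\mathbb{R}^{k\times k})$, so there is $\varepsilon'>0$ with $B+Q'\in\mathcal{L}^{\mathrm{Lya}}(\mathbb{N},\mathbb{R}^{k\times k})$ whenever $\|Q'\|_\infty<\varepsilon'$; by the explicit construction in that proof we may arrange $\|Q\|_\infty<\varepsilon'$.

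Now, given $\varepsilon>0$, put $\hat\varepsilon:=\min\{\varepsilon,\varepsilon'/2\}\in(0,\varepsilon')$ and let $Q_{\hat\varepsilon}$ be the truncation of $Q$ from Remark \ref{Rem:eps}. Then $\|Q_{\hat\varepsilon}\|_\infty\le\hat\varepsilon\le\varepsilon$, hence $\|Q_{\hat\varepsilon}\|_\infty<\varepsilon'$ and $B+Q_{\hat\varepsilon}\in\mathcal{L}^{\mathrm{Lya}}(\mathbb{N},\mathbb{R}^{k\times k})$. Applying Remark \ref{Rem:eps} with the one-dimensional subspace $L=\operatorname{span}\{z_0\}$ gives $\underline{\beta}_{B+Q_{\hat\varepsilon}}(z_0)=\underline{\beta}_{B+Q}(z_0)$ and $\overline{\beta}_{B+Q_{\hat\varepsilon}}(z_0)=\overline{\beta}_{B+Q}(z_0)$, so $B+Q_{\hat\varepsilon}$ inherits the required inequalities. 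Setting $Q_\varepsilon:=Q_{\hat\varepsilon}$, we have in both cases $\underline{\beta}_{B+Q_\varepsilon}(z_0)\le 0\le\overline{\beta}_{B+Q_\varepsilon}(z_0)$ with $z_0\ne 0$, so Corollary \ref{cor:notBD} yields $B+Q_\varepsilon\notin\mathrm{BD}^k$.

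Once Lemmas \ref{lem:lya_perturbed}/\ref{lem:lya_perturbed_v2} and Remark \ref{Rem:eps} are available, this is bookkeeping rather than a substantial argument; the only point needing a little care is to match the openness threshold $\varepsilon'$ for $\mathcal{L}^{\mathrm{Lya}}$ with the size of $Q$ so that Remark \ref{Rem:eps} applies, and to remember that the truncation changes $Q$ at only finitely many indices, so that the Bohl exponents of $z_0$ are genuinely unaffected.
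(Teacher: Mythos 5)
Your proof is correct and follows exactly the route the paper intends: apply Lemma \ref{lem:lya_perturbed} (resp.\ \ref{lem:lya_perturbed_v2}) to produce a decaying perturbation $Q$ with the required Bohl-exponent properties at $z_0$, truncate via Remark \ref{Rem:eps} (which leaves the Bohl exponents of $\operatorname{span}\{z_0\}$ unchanged since $Q$ and $Q_{\hat\varepsilon}$ differ at only finitely many indices), and conclude with Corollary \ref{cor:notBD}. The paper gives this derivation only as a one-line remark, and your write-up is a faithful, correctly bookkept expansion of it.
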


The following two lemmas assume conditions for lower Bohl exponents assuring the existence of a perturbed system that has a solution with specific Bohl exponents.

\begin{lemma}[Perturbation with special solution I]
\label{lem:lya_perturbed_lower_v2_step1} Let $B\in \mathcal{L}^{\mathrm{Lya}%
}(\mathbb{N},\mathbb{R}^{k\times k})$ with
\begin{equation*}
\inf_{x_{0}\in \mathbb{R}^{k}\setminus \{0\}}\underline{\beta }%
_{B}(x_{0})>0\qquad \text{and}\qquad \underline{\beta }_{B}(\mathbb{R}%
^{k})\leq 0.
\end{equation*}%
Then for any $\varepsilon >0$ there exists a $Q\in \mathcal{L}^{\infty }(%
\mathbb{N},\mathbb{R}^{k\times k})$ with

(i) $\left\Vert Q\right\Vert _{\infty }<\varepsilon $,

(ii) $B+Q\in \mathcal{L}^{\mathrm{Lya}}(\mathbb{N},\mathbb{R}^{k\times k})$,

(iii) $\inf_{x_{0}\in \mathbb{R}^{k}\setminus \{0\}}\underline{\beta }%
_{B+Q}(x_{0})>0\qquad $

(iv) $\underline{\beta }_{B+Q}(\mathbb{R}^{k})<0$.
\end{lemma}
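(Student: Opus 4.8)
The plan is to bypass the Millionshikov rotation method entirely for this lemma and instead use a simple homothety of the coefficient sequence. For a parameter $a>0$ still to be chosen, I would put
\[
Q \coloneqq \big(\mathrm e^{-a}-1\big)B, \qquad\text{so that}\qquad B+Q = \mathrm e^{-a}B = B\mathrm e^{-a}.
\]
First I would record the two structural properties. Since $\Vert Q\Vert_\infty = (1-\mathrm e^{-a})\Vert B\Vert_\infty$ (and $\Vert B\Vert_\infty>0$ because every $B(n)$ is invertible), and since $(B\mathrm e^{-a})^{-1} = \mathrm e^{a}B^{-1}$ is bounded, I get (i) $\Vert Q\Vert_\infty<\varepsilon$ as soon as $a$ is small enough, and (ii) $B+Q\in\mathcal L^{\mathrm{Lya}}(\mathbb N,\mathbb R^{k\times k})$.

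Next I would transfer the information on Bohl exponents. This is exactly the computation already used in the discussion preceding the corollary that $\mathrm{BD}^{d}$ is neither open nor closed: from $\Phi_{B\mathrm e^{-a}}(n,m)=\mathrm e^{-a(n-m)}\Phi_B(n,m)$ one gets, for $n>m$ and any $x_0\ne 0$,
\[
\frac1{n-m}\ln\frac{\Vert\Phi_{B\mathrm e^{-a}}(n,0)x_0\Vert}{\Vert\Phi_{B\mathrm e^{-a}}(m,0)x_0\Vert}
= -a + \frac1{n-m}\ln\frac{\Vert\Phi_{B}(n,0)x_0\Vert}{\Vert\Phi_{B}(m,0)x_0\Vert},
\]
and the shift $-a$ does not depend on $x_0$, $n$, $m$. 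Taking the infimum over $x_0\in L\setminus\{0\}$, then the infimum over admissible $(n,m)$, then the supremum over $N$ in the definition of the lower Bohl exponent, I obtain $\underline{\beta}_{B+Q}(L)=\underline{\beta}_{B}(L)-a$ for every subspace $L\subseteq\mathbb R^k$; in particular $\underline{\beta}_{B+Q}(\mathbb R^k)=\underline{\beta}_B(\mathbb R^k)-a$ and $\underline{\beta}_{B+Q}(x_0)=\underline{\beta}_B(x_0)-a$ for every $x_0\ne 0$, whence $\inf_{x_0\ne 0}\underline{\beta}_{B+Q}(x_0)=\inf_{x_0\ne 0}\underline{\beta}_B(x_0)-a$.

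Finally I would fix $a$. Using the hypothesis $\inf_{x_0\ne 0}\underline{\beta}_B(x_0)>0$, I choose $a>0$ small enough that simultaneously $(1-\mathrm e^{-a})\Vert B\Vert_\infty<\varepsilon$ and $a<\inf_{x_0\ne 0}\underline{\beta}_B(x_0)$. Then (i) and (ii) hold as above, (iii) holds because $\inf_{x_0\ne 0}\underline{\beta}_{B+Q}(x_0)=\inf_{x_0\ne 0}\underline{\beta}_B(x_0)-a>0$, and (iv) holds because, using the second hypothesis $\underline{\beta}_B(\mathbb R^k)\le 0$, we get $\underline{\beta}_{B+Q}(\mathbb R^k)=\underline{\beta}_B(\mathbb R^k)-a\le -a<0$. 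I do not expect any real obstacle: the only point worth a remark is that, unlike the perturbations produced in Lemmas~\ref{lem:lya_perturbed} and~\ref{lem:lya_perturbed_v2}, this $Q$ does not converge to $0$, but the present statement only requires $\Vert Q\Vert_\infty<\varepsilon$, and it is precisely the uniformity in $x_0$ of the rescaling factor $\mathrm e^{-a(n-m)}$ that lets this single perturbation move $\underline{\beta}_B(\mathbb R^k)$ and all the $\underline{\beta}_B(x_0)$ by the same amount $a$.
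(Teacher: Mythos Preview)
Your proof is correct and is essentially identical to the paper's own argument: the paper also takes $Q(n) = B(n)(e^{-\delta}-1)$ so that $B+Q = e^{-\delta}B$, and uses the shift $\underline\beta_{B+Q}(\cdot) = \underline\beta_B(\cdot) - \delta$ to verify (iii) and (iv). In particular, the paper does \emph{not} use the Millionshikov rotation method for this lemma, so your remark about ``bypassing'' it is moot---you have simply rediscovered the intended proof.
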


\begin{proof}
Let us denote
\begin{equation*}
\inf_{x_{0}\in \mathbb{R}^{k}\setminus \{0\}}\underline{\beta }%
_{B}(x_{0})=\nu >0
\end{equation*}%
and fix $\varepsilon >0.$
Since $\mathcal{L}^{\mathrm{Lya}}(\mathbb{N},\mathbb{R}^{k\times k})$ is open by Remark \ref{rem:Lya_open}, there is \(\varepsilon' > 0\), such that for all $Q \in \mathcal{L}^{\mathrm{Lya}}(\mathbb{N},\mathbb{R}^{k\times k})$ with $\Vert Q\Vert_\infty \leq \varepsilon'$ we have $B+Q \in \mathcal{L}^{\mathrm{Lya}}(\mathbb{N},\mathbb{R}^{k\times k})$.
It suffices to prove the statement under the assumption $\varepsilon < \min(\Vert B\Vert_\infty,\varepsilon')$.
Let us take
\begin{equation*}
\delta \in \left( 0,\min \left\{ \nu ,-\ln \left( 1-\frac{\varepsilon }{%
\left\Vert B\right\Vert _{\infty }}\right) \right\} \right) ,
\end{equation*}%
then
\begin{equation*}
\left\vert 1-\mathrm e^{-\delta }\right\vert \leq \frac{\varepsilon }{\left\Vert
B\right\Vert _{\infty }}
\end{equation*}%
and therefore for
\begin{equation*}
Q(n)=B(n)\left(\mathrm e^{-\delta }-1\right).
\end{equation*}%
we have%
\begin{equation*}
\left\Vert Q\right\Vert _{\infty }\leq \varepsilon .
\end{equation*}%
Moreover, we have%
\begin{equation*}
B(n)+Q(n)=B(n)\mathrm  e^{-\delta }.
\end{equation*}%
The last relation implies%
\begin{equation*}
\underline{\beta }_{B+Q}(x_{0})=\underline{\beta }_{B}(x_{0})-\delta > \nu - \delta,
\end{equation*}%
for any $x_{0}\in \mathbb{R}^{k}\setminus \{0\}$ and
\begin{equation*}
\underline{\beta }_{B+Q}(\mathbb{R}^{k})=\underline{\beta }_{B}(\mathbb{R}%
^{k})-\delta.
\end{equation*}
Therefore
\begin{equation*}
\inf_{x_{0}\in \mathbb{R}^{k}\setminus \{0\}}\underline{\beta }%
_{B+Q}(x_{0})>0
\qquad\text{and}\qquad
\underline{\beta }_{B+Q}(\mathbb{R}^{k})\leq
-\delta \text{.}
\end{equation*}
\end{proof}

\begin{lemma}[Perturbation with special solution II]
\label{Lem:10}
Let $B \in \mathcal{L}^{\mathrm{Lya}}(\mathbb{N},\mathbb{R}^{k\times k})$ with $\underline{\beta}_{B}(\mathbb{R}^{k})<0$ and $k \geq 2$.
Then there exists a $Q\in \mathcal{L}^{\infty }(\mathbb{N},\mathbb{R}^{k\times k})$ with

(i) $\underset{\ell\rightarrow \infty }{\lim }Q(\ell)=0$,

(ii) $B+Q\in \mathcal{L}^{\mathrm{Lya}}(\mathbb{N},\mathbb{R}^{k\times k})$,

(iii) $\underset{x_{0}\in \mathbb{R}^{k}\setminus \{0\}}{\inf }\underline{%
\beta }_{B+Q}(x_{0})<0.$
\end{lemma}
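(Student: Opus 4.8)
The plan is to mimic the construction of Lemma \ref{lem:lya_perturbed}, but now perturbing so that \emph{some} direction acquires a nonpositive lower Bohl exponent, using the hypothesis $\underline\beta_B(\mathbb R^k) < 0$ together with Lemma \ref{lem:double-sequence_lower}. First I would fix $z_0 \in \mathbb R^k \setminus \{0\}$ arbitrarily; by Lemma \ref{lem:double-sequence_lower} applied with $\delta := -\underline\beta_B(\mathbb R^k) > 0$ (or any $\delta \in (0, -\underline\beta_B(\mathbb R^k))$) and a suitable decreasing null sequence $(\varepsilon_\ell)$, I obtain pairs $(s_\ell, \tau_\ell)$ with $\tau_\ell < s_\ell < \tau_{\ell+1}$, $s_\ell - \tau_\ell \to \infty$, the technical estimate \eqref{_1}, and $\|\Phi_B(\tau_\ell, s_\ell)\|^{-1} \le \mathrm e^{(-\delta + \varepsilon_\ell)(s_\ell - \tau_\ell)}$. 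I would choose the null sequence so small (as in Lemma \ref{lem:lya_perturbed}, $\varepsilon_\ell \le \min\{1/(\ell+1), \varepsilon'/b\}$ where $\varepsilon'$ controls the Lyapunov-admissibility ball around $B$ and $b = \max\{\|B\|_\infty, \|B^{-1}\|_\infty\}$) that the resulting perturbation automatically satisfies (i) and (ii).

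The core of the construction is a recursive definition of an increasing sequence $(T_j)_{j \in \mathbb N}$ and of $Q$ on the blocks $[T_j, T_{j+1}-1]$, where on the $j$-th block I insert a single rotation via the Millionshikov Rotation Method in the form of Theorem \ref{MRMC1}, chosen to boost the growth of the current vector $v := (B(T_j - 1) + Q(T_j - 1)) \cdots (B(0) + Q(0)) z_0$ along an interval $[\tau_{\ell_j}, s_{\ell_j}]$ where $\ell_j$ is picked large enough that $\tau_{\ell_j} \ge T_j + 2$ and $\mathrm e^{-\varepsilon_j(s_{\ell_j} - \tau_{\ell_j})} \le \frac{\sin\varepsilon_j}{2}$ (possible since $s_\ell - \tau_\ell \to \infty$ and $\tau_\ell \to \infty$). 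The rotation produces $R$ with $\|R\| \le \varepsilon_j b$ such that the propagated vector satisfies a lower bound of the shape
\begin{equation*}
\frac{\|(B(s_{\ell_j}-1)+Q(s_{\ell_j}-1))\cdots(B(0)+Q(0))z_0\|}{\|(B(\tau_{\ell_j}-1)+Q(\tau_{\ell_j}-1))\cdots(B(0)+Q(0))z_0\|} \ge \frac{\sin\varepsilon_j}{2}\,\|\Phi_B(s_{\ell_j},\tau_{\ell_j})\|.
\end{equation*}
The essential difference from Lemma \ref{lem:lya_perturbed} is that here I need a \emph{lower} bound on $\|\Phi_B(s_{\ell_j}, \tau_{\ell_j})\|$, i.e.\ an \emph{upper} bound on $\|\Phi_B(\tau_{\ell_j}, s_{\ell_j})\|^{-1}$ — which is exactly what \eqref{_2} supplies. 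Combining these yields, along the subsequence $(s_{\ell_j}, \tau_{\ell_j})_j$, an estimate forcing $\underline\beta_{B+Q}(z_0) \le -\delta + O(\varepsilon) < 0$, hence $\inf_{x_0}\underline\beta_{B+Q}(x_0) < 0$, giving (iii). Unlike Lemma \ref{lem:lya_perturbed}, I expect no alternating two-case structure is needed: every block carries a rotation, and I set $Q \equiv 0$ on the remainder of each block.

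The main obstacle is verifying the lower bound on the propagated vector carefully: the Millionshikov rotation only controls the norm of $\Phi_B(s_{\ell_j}, \tau_{\ell_j})$ applied to the \emph{rotated} vector, and I must make sure the estimate \eqref{_1} is used to absorb the factor $\frac{2}{\sin\varepsilon_\ell}$ into the exponential rate so that the $\sin\varepsilon_j/2$ loss and the $\|R\|$-perturbation of $B$ do not destroy the negativity of the lower Bohl exponent. Concretely, one needs to track that replacing $B(\tau_{\ell_j}-1)$ by $B(\tau_{\ell_j}-1) + R$ changes norms by at most a factor $(1 \pm \varepsilon_j b / (\text{something}))$, and that over the single modified step this contributes only a bounded multiplicative error, which is negligible against the exponentially growing $s_{\ell_j} - \tau_{\ell_j}$. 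Once that bookkeeping is in place, the facts that $\varepsilon_\ell \to 0$ and $s_\ell - \tau_\ell \to \infty$ close the argument exactly as in the proof of Lemma \ref{lem:lya_perturbed}, and properties (i), (ii) follow from $\|Q(\ell)\| \le \varepsilon_j b$ on $[T_j, T_{j+1}-1]$ together with $\|Q\|_\infty \le \varepsilon'$.
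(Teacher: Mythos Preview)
There is a genuine gap in the direction of your key estimate. To force $\underline\beta_{B+Q}(z_0)<0$ you need \emph{upper} bounds of the form $\|z(s,z_0)\|/\|z(\tau,z_0)\|\le \mathrm e^{-\delta'(s-\tau)}$ on arbitrarily long intervals, but the displayed inequality you write is a \emph{lower} bound on this ratio, which controls at best the upper Bohl exponent. Relatedly, your reading of \eqref{_2} is off: $\|\Phi_B(\tau_\ell,s_\ell)\|^{-1}=\|\Phi_B(s_\ell,\tau_\ell)^{-1}\|^{-1}$ is the smallest singular value of $\Phi_B(s_\ell,\tau_\ell)$, so \eqref{_2} tells you there is a strongly \emph{contracting} direction for the forward map, not that its norm is large.

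This is not merely a sign slip you can fix by ``rotating into the contracting direction'': the forward Millionshikov rotation (Theorem \ref{MRMC1}(a), Remark \ref{rem:millionshikov_alg}(a)) always produces an $\varepsilon$-\emph{fast} vector, and in general there is no $\varepsilon$-slow vector within an $\varepsilon$-cone of a given vector (take $F=\mathrm{diag}(1,\lambda)$ with $\lambda$ tiny and start from $e_1$). The tool that does give the needed contraction is the \emph{backward} rotation Theorem \ref{MRMC1}(b), applied with $k=\tau_q$ and $m=s_q$: it yields $\|z(\tau_q)\|\ge\frac{\sin\varepsilon}{2}\|\Phi_B(\tau_q,s_q)\|\,\|z(s_q)\|$, i.e.\ the right upper bound on $\|z(s_q)\|/\|z(\tau_q)\|$ in view of \eqref{_1} and \eqref{_2}. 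But this perturbs at time $s_q$ and fixes the solution only at times $\ge s_q+1$, so it alters the trajectory (and effectively the initial condition) at all earlier times; a simple forward block-recursive scheme as in Lemma \ref{lem:lya_perturbed} therefore breaks down. The paper resolves this by constructing, for each $\ell$, a perturbation $Q_\ell$ and initial value $z_{\ell,0}$ via backward rotations applied successively at $s_\ell,s_{\ell-1},\dots,s_1$ (so that each step is compatible with the later ones), and then extracting $Q$ and the special initial condition $v_0$ by a diagonal subsequence argument over $\ell$. That compactness/diagonal step is the missing idea in your plan.
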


In the proof of Lemma \ref{Lem:10} we apply the Millionshikov Rotation Method as formulated in Theorem \ref{MRMC1}.

\begin{proof}
Let $\varepsilon ^{\prime }>0$ such that $B^{\prime }\in \mathcal{L}^{\mathrm{Lya}}(\mathbb{N},\mathbb{R}^{k\times k})$ for all $B^{\prime }\in
\mathcal{L}^{\mathrm{\infty }}(\mathbb{N},\mathbb{R}^{k\times k}),$ $%
\left\Vert B-B^{\prime }\right\Vert_{\infty } <\varepsilon ^{\prime }$\emph{.} Let $%
b:=\max \{\Vert B\Vert _{\infty },\Vert B^{-1}\Vert _{\infty }\}$ and $%
\varepsilon _{\ell }:=\min \left\{\frac{1}{\ell+1}%
,\varepsilon ^{\prime }\right\} $ for $\ell \in \mathbb{N}$. Using the
assumption \underline{$\beta $}$_{B}(\mathbb{R}^{k})<0$, Lemma \ref%
{lem:double-sequence_lower} with any $-\delta \in \left( \underline{\beta }%
_{B}(\mathbb{R}^{k}),0\right) $ yields a sequence $\big((s_{\ell },\tau
_{\ell })\big)_{\ell \in \mathbb{N}}$ in $\mathbb{N}\times \mathbb{N}$ such
that \eqref{_0}, \eqref{_1} and \eqref{_2} are satisfied.

First, for each $\ell \in \mathbb{N}$ we will construct a perturbation $Q_{\ell } \in \mathcal L^\infty(\mathbb N,\mathbb R^{k\times k})$ and an initial condition $z_{\ell ,0}$ for the system
\begin{equation}
z(n+1)=(B(n)+Q_{\ell }(n))z(n),\quad \quad   \label{Perturbed_Bohl_blr10}
\end{equation}%
such that

\begin{enumerate}
\item $Q_{\ell }(n)=0$ for $n\neq s_{j},$ $j\in \mathbb{N}$,

\item $\Vert Q_{\ell }(s_{j})\Vert <\varepsilon _{j}b$ for $j\in \mathbb{N},$

\item the solution $(z_{\ell }(n,z_{\ell ,0}))_{n\in \mathbb{N}}$
of (\ref{Perturbed_Bohl_blr10}), satisfies
\begin{equation}
\Vert z_{\ell }(\tau _{q},z_{\ell ,0})\Vert \geq \frac{\sin \varepsilon
_{q}}{2}\;\Vert \Phi _{B}(\tau _{q},s_{q})\Vert \Vert z_{\ell }(s_{q},z_{\ell
,0})\Vert ,\qquad q=1,\ldots ,\ell .  \label{Perturbed_Bohl_blr11}
\end{equation}
\end{enumerate}

The perturbation and the initial value will be obtained as
\begin{equation}\label{eq:def_Q_l}
   Q_{\ell} \coloneqq Q_\ell^{(\ell)} + \dots + Q_\ell^{(1)} \text{ and } z_{\ell,0} \coloneqq \frac{z_{\ell,0}^{(1)}}{\Vert z_{\ell,0}^{(1)}\Vert}
\end{equation}
where $Q_\ell^{(j)}$ and $z_{\ell,0}^{(j)}$ are constructed for $j = \ell, \ell-1, \dots, 1$ such that

\begin{enumerate}
\item[(i)] $Q_{\ell }^{(j)}(n)=0$ for $n\neq s_{j},$ $j\in \mathbb{N}$,

\item[(ii)] $\Vert Q_{\ell }^{(j)}(s_{j})\Vert <\varepsilon _{j}b$ for $j\in \mathbb{N},$

\item[(iii)] the solution $(z_{\ell }^{(j)}(n,z_{\ell ,0}^{(j)}))_{n\in \mathbb{N}}$ of the system
\begin{equation*}
    z(n+1) = \big(B(n) + Q_\ell^{(\ell)}(n) + \dots + Q_\ell^{(j)}(n)\big)z(n),
\end{equation*}
satisfies
\begin{equation}
   \Vert z_\ell^{(j)}(\tau_q,z_{\ell,0}^{(j)})\Vert \geq \frac{\sin \varepsilon_q}2 \Vert \Phi_B(\tau_q,s_q)\Vert \Vert z_\ell(s_q,z_{\ell,0}^{(j)})\Vert,
   \qquad q=j,\dots,\ell.
   \label{Perturbed_Bohl_jblr11}
\end{equation}
\end{enumerate}

Applying the Millionshikov backward rotation method Theorem \ref{MRMC1}(b) to the sequence $A=B$ with
\begin{equation*}
   \varepsilon = \varepsilon_\ell,\, x_{0} = z_{0},\, k=\tau _\ell
   \qquad\text{and}\qquad m=s_{\ell },
\end{equation*}
we obtain a perturbation $Q_\ell^{(\ell)}$ and an initial value $z_{\ell,0}^{(\ell)}$ with the properties (i), (ii) and (iii).

$j+1\rightarrow j$:
Suppose now that we have constructed a perturbation $Q_\ell^{(j+1)}$ and an initial value $z_{\ell,0}^{(j+1)}$ satisfying (i), (ii) and (iii).
Applying the Millionshikov backward rotation method Theorem \ref{MRMC1}(b) to the sequence $A = B + Q_\ell^{(\ell)} + \dots + Q_\ell^{(j+1)}$ with
\begin{equation*}
   \varepsilon = \varepsilon_{j},\,
   x_{0} = z_{\ell,0}^{(j+1)},\,
   k = \tau_{j}\qquad
   \text{and}\qquad m = s_{j},
\end{equation*}
we obtain a sequence $Q_\ell^{(j)}$ and an initial value $z_{\ell,0}^{(j)}$ such that
\begin{equation}\label{eq:eq_sol_Mil_bcw}
   z_\ell^{(j)}(s_j+n,z_{\ell,0}^{(j)}) = z_\ell^{(j+1)}(s_j+n,z_{\ell,0}^{(j+1)}),\qquad n\geq 1
\end{equation}
and such that
\begin{equation*}
   \Vert z_\ell^{(j)}(\tau_{s_j},z_{\ell,0}^{(j)})
   \geq \frac 12 \sin\varepsilon_j \Vert \Phi_{B+Q_\ell^{(\ell)}+\dots+Q_\ell^{(j+1)}}(\tau_j,s_j)\Vert \Vert z_\ell^{(j)}(s_j,z_{\ell,0}^{(j)})\Vert.
\end{equation*}
That $Q_\ell^{(j)}$ satisfies (i) and (ii) follows from the Millionshikov rotation method.
We see that (iii) holds for $q = j+1,\dots, \ell$ from \eqref{eq:eq_sol_Mil_bcw}.
To see that (iii) holds for $q = j$ we note that $B(n) + Q_\ell^{(\ell)}(n) + \dots+Q_\ell^{(j)}(n) = B(n)$ for $n = 0,\dots,s_j-1$ and hence
\begin{equation*}
   \Phi_{B + Q_\ell^{(\ell)} + \dots + Q_\ell^{(j+1)}}(s_{j},\tau_{j}) = \Phi_{B}(s_{j},\tau_{j}).
\end{equation*}
We now define $Q_\ell$ and $z_{\ell,0}$ according to \eqref{eq:def_Q_l} and observe that 1., 2. and 3. hold.


Since the set $\{z_{\ell}(s_{1},z_{\ell,0}) \colon \ell \in \mathbb{N}\}$ is bounded as a consequence of the assumption $\left\Vert z_{\ell,0}\right\Vert =1$ for all $\ell \in \mathbb{N}$, and the set $\{Q_{\ell }\left( s_{1}\right) \colon \ell \in \mathbb{N}\}$ is bounded, there exists a sequence $(\ell_{j}^{(1)})_{j\in \mathbb{N}}$ of natural numbers such that the sequences
\begin{equation*}
   \left( z_{\ell_{j}^{(1)}}\left( s_{1},z_{\ell_{j}^{(1)},0}\right) \right) _{j\in\mathbb{N}}
   \text{ and }\left( Q_{\ell_{j}^{(1)}}(s_{1})\right) _{j\in \mathbb{N}}
\end{equation*}
are convergent.
Denote
\begin{equation*}
   v(s_{1}) = \underset{j\rightarrow \infty }{\lim }z_{\ell_{j}^{(1)}}\left( s_{1},z_{\ell_{j}^{(1)},0}\right)
   \text{ and }Q(s_{1}) = \underset{j\rightarrow \infty }{\lim }Q_{\ell_{j}^{(1)}}(s_{1}).
\end{equation*}
From the sequence $(\ell_{j}^{(1)})_{j\in \mathbb{N}}$ we choose a subsequence $(\ell_{j}^{(2)})_{j\in \mathbb{N}}$ such that the sequences
\begin{equation*}
   \left(z_{\ell_{j}^{(2)}}\left( s_{2},z_{\ell_{j}^{(2)},0}\right)\right)_{j\in\mathbb{N}}
   \text{ and }\left( Q_{\ell_{j}^{(2)}}(s_{2})\right)_{j\in \mathbb{N}}
\end{equation*}
are convergent and we denote
\begin{equation*}
   v(s_{2}) = \underset{j\rightarrow \infty }{\lim } z_{\ell_{j}^{(2)}}\left( s_{2},z_{\ell_{j}^{(2)},0}\right)
   \text{ and }Q(s_{2}) = \underset{j\rightarrow \infty }{\lim }Q_{\ell_{j}^{(2)}}(s_{2}).
\end{equation*}
We will continue this procedure for all $s_{i},$ $i\in \mathbb{N}$.
In this way we obtain sequences $\left(v(s_{i})\right)_{i\in\mathbb{N}}$ and $\left(Q(s_{i})\right) _{i \in \mathbb{N}}$.
We extend these sequences to sequences $\left(v(n)\right)_{n\in\mathbb{N}}$ and $\left(Q(n)\right)_{n\in \mathbb{N}}$ as follows
\begin{equation*}
   Q(n) =
   \begin{cases}
      &Q(s_{i})\text{ if }n=s_{i}\text{ for certain }i\in \mathbb{N},
      \\
      &0\text{ otherwise},
   \end{cases}
\end{equation*}
and
\begin{equation*}
   v(n) =
   \begin{cases}
      &B^{-1}\left( n\right) \dots B^{-1}\left( s_{1}-1\right) v(s_{1})\text{ for }n\in \left[ 0,s_{1}-1\right],
      \\
      &v(s_{i})\text{ if }n=s_{i}\text{ for certain }i\in \mathbb{N},\, i \geq 1,
      \\
      &B(n-1)\dots[B(s_{i}) + Q(s_{i})]v(s_{i})\text{ if }n\in \left(s_{i},s_{i+1}\right) \text{ for certain }i\in \mathbb{N},\, i \geq 1.
   \end{cases}
\end{equation*}
A computation shows that $\left(v(n)\right)_{n\in \mathbb{N}}$ is the solution of system
\begin{equation*}
v(n+1)=(B(n)+Q(n))v(n)
\end{equation*}
with initial condition
\begin{equation}
   v_{0} := B^{-1}\left( 0\right) \dots B^{-1}\left(s_{1}-1\right) v(s_1) \label{sol}.
\end{equation}%
Observe that $Q(n)=0\text{ for }n\neq s_{i},\text{ }i\in \mathbb{N}$ and that from (ii) of the properties of $Q_\ell^{(j)}$, $\ell\in\mathbb N$, $j\in\{1,\dots,\ell\}$, it follows that
\begin{equation*}
   \Vert Q(s_{i})\Vert \leq \varepsilon_{i}b\text{ for }i\in \mathbb{N}
\end{equation*}
and in particular, by definition of $\varepsilon_i$, we have $B + Q \in \mathcal L^{\mathrm{Lya}}(\mathbb N,\mathbb R^{k \times k})$ and
\begin{equation*}
   \underset{l\rightarrow \infty }{\lim }Q(l)=0.
\end{equation*}

We will show that for the solution $\left( v(n,v_{0})\right) _{n\in \mathbb{N}}$ the inequality
\begin{equation}
   \Vert v(\tau _{q},v_{0})\Vert \geq \frac{1}{2}\sin \varepsilon_{q}\;\Vert\Phi_{B}(\tau_{q},s_{q})\Vert \Vert v(s_{q},v_{0})\Vert,
   \label{Perturbed_Bohl_blr13}
\end{equation}
is satisfied for all $q\in \mathbb{N}$, $q\geq 1$.
Let us fix $q_{0}\in\mathbb{N}$, $q_{0}\geq 1$.
We have
\begin{equation}
   v(s_{q_{0}},v_{0}) = \underset{j\rightarrow \infty }{\lim}z_{\ell_{j}^{(q_{0})}}\left( s_{q_{0}},z_{\ell_{j}^{(q_{0})},0}\right).
   \label{Perturbed_Bohl_blr14}
\end{equation}%
Since $s_{q-1}<\tau _{q}<s_{q},$ $Q(i)=0$ for $i=\tau _{q},\dots,s_{q}-1$
and
\begin{align}
   v(\tau _{q_{0}},v_{0}) &= B^{-1}\left( \tau _{q_{0}}\right) \dots B^{-1}\left(s_{q_{0}}-1\right) v(s_{q_{0}},v_{0})
   \nonumber
   \\
   &= B^{-1}\left( \tau _{q_{0}}\right) \dots B^{-1}\left( s_{q_{0}}-1\right)\underset{j\rightarrow \infty }{\lim }z_{\ell_{j}^{(q_{0})}}\left(s_{q_{0}},z_{\ell_{j}^{(q_{0})},0}\right)
   \nonumber
   \\
   &= \underset{j\rightarrow \infty }{\lim }B^{-1}\left( \tau _{q_{0}}\right)\dots B^{-1}\left( s_{q_{0}}-1\right) z_{\ell_{j}^{(q_{0})}}\left(s_{q_{0}},z_{\ell_{j}^{(q_{0})},0}\right)
   \nonumber
   \\
   &= \underset{j\rightarrow \infty }{\lim}z_{\ell_{j}^{(q_{0})}}\left(\tau_{q_{0}},z_{\ell_{j}^{(q_{0})},0}\right).
   \label{Perturbed_Bohl_blr15}
\end{align}
By (\ref{Perturbed_Bohl_blr11}) we have
\begin{equation*}
   \Vert z_{\ell_{j}^{(q_{0})}}(\tau _{q_{0}},z_{\ell_{j}^{(q_{0})},0})\Vert
   \geq
   \frac{1}{2}\sin \varepsilon _{q_{0}}\;\Vert \Phi _{B}(\tau_{q_{0}},s_{q_{0}})\Vert \Vert z_{\ell_{j}^{(q_{0})}}(s_{q_{0}},z_{\ell_{j}^{(q_{0})},0})\Vert,
\end{equation*}%
for all $j\in \mathbb{N}$ such that $q_{0} \leq \ell_{j}^{(q_{0})}$.
Passing to the limit when $j$ tends to infinity in the last inequality and having in mind \eqref{Perturbed_Bohl_blr14} and \eqref{Perturbed_Bohl_blr15} we get \eqref{Perturbed_Bohl_blr13}.

Finally we will show that
\begin{equation}
   \underline{\beta }_{B+Q}(v_{0})<0,
   \label{Perturbed_Bohl_blr16}
\end{equation}
which proves (iii).
From \eqref{Perturbed_Bohl_blr13} we have
\begin{equation*}
   \frac{\Vert v(s_{q},v_{0})\Vert }{\Vert v(\tau _{q},v_{0})\Vert }\leq \frac{2}{\sin \varepsilon _{q}}\Vert \Phi _{B}^{-1}(s_{q},\tau _{q},)\Vert^{-1}
\end{equation*}
and using \eqref{_1} and \eqref{_2} we get
\begin{equation*}
   \frac{1}{s_{q}-\tau _{q}}\ln \frac{\Vert v(s_{q},v_{0})\Vert }{\Vert v(\tau_{q},v_{0})\Vert } \leq -\delta + 2\varepsilon_q,
   \qquad q \in \mathbb N.
\end{equation*}
The last inequality implies (\ref{Perturbed_Bohl_blr16}).
\end{proof}

Using Remark \ref{Rem:eps} we obtain from Lemma \ref{Lem:10} the following
result.
\begin{corollary}[Perturbation with special solution]
\label{C2}Under the assumption of Lemma \ref{Lem:10} for each $\varepsilon >0
$ there exists a $Q_{\varepsilon }\in \mathcal{L}^{\infty }(\mathbb{N},%
\mathbb{R}^{k\times k})$ such that $\left\Vert Q_{\varepsilon }\right\Vert
_{\infty }\leq \varepsilon $, $B+Q_{\varepsilon }\in \mathcal{L}^{\mathrm{Lya%
}}(\mathbb{N},\mathbb{R}^{k\times k})$ and $\underset{x_{0}\in \mathbb{R}%
^{k}\setminus \{0\}}{\inf }\underline{\beta }_{B+Q}(x_{0})<0$.
\end{corollary}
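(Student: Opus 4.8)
The plan is to derive the corollary directly from Lemma \ref{Lem:10} together with Remark \ref{Rem:eps}, the latter serving only to shrink the perturbation. Since the hypotheses here are exactly those of Lemma \ref{Lem:10}, that lemma yields a $Q\in\mathcal{L}^{\infty}(\mathbb{N},\mathbb{R}^{k\times k})$ with $\lim_{\ell\to\infty}Q(\ell)=0$, $B+Q\in\mathcal{L}^{\mathrm{Lya}}(\mathbb{N},\mathbb{R}^{k\times k})$, and $\inf_{x_{0}\in\mathbb{R}^{k}\setminus\{0\}}\underline{\beta}_{B+Q}(x_{0})<0$. This $Q$ need not be small in $\mathcal{L}^{\infty}$, so the remaining task is to replace it by an $\varepsilon$-small perturbation that still produces a Lyapunov sequence with the same (negative) infimum of lower Bohl exponents.

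To this end I would first fix, using the openness of $\mathcal{L}^{\mathrm{Lya}}(\mathbb{N},\mathbb{R}^{k\times k})$ in $\mathcal{L}^{\infty}(\mathbb{N},\mathbb{R}^{k\times k})$ established via a Neumann-series argument in the proof of Lemma \ref{lem:lya_perturbed}, a number $\varepsilon'>0$ such that $B+Q'\in\mathcal{L}^{\mathrm{Lya}}(\mathbb{N},\mathbb{R}^{k\times k})$ whenever $\Vert Q'\Vert_{\infty}<\varepsilon'$. Given the target $\varepsilon>0$, put $\eta:=\tfrac{1}{2}\min\{\varepsilon,\varepsilon'\}\in(0,\varepsilon')$ and let $Q_{\varepsilon}:=Q_{\eta}$ be the truncation of $Q$ at level $\eta$ in the sense of Remark \ref{Rem:eps}, that is, $Q_{\eta}(n)=Q(n)$ if $\Vert Q(n)\Vert\leq\eta$ and $Q_{\eta}(n)=0$ otherwise. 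Because $Q(\ell)\to 0$, the sequences $Q$ and $Q_{\eta}$ differ at only finitely many indices, so Remark \ref{Rem:eps} gives $\underline{\beta}_{B+Q_{\eta}}(L)=\underline{\beta}_{B+Q}(L)$ for every subspace $L\subseteq\mathbb{R}^{k}$, in particular for every one-dimensional $L$; hence $\underline{\beta}_{B+Q_{\varepsilon}}(x_{0})=\underline{\beta}_{B+Q}(x_{0})$ for all $x_{0}\in\mathbb{R}^{k}\setminus\{0\}$, and therefore $\inf_{x_{0}\in\mathbb{R}^{k}\setminus\{0\}}\underline{\beta}_{B+Q_{\varepsilon}}(x_{0})<0$. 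Moreover $\Vert Q_{\varepsilon}\Vert_{\infty}\leq\eta\leq\varepsilon$ and $\Vert Q_{\varepsilon}\Vert_{\infty}<\varepsilon'$, so $B+Q_{\varepsilon}\in\mathcal{L}^{\mathrm{Lya}}(\mathbb{N},\mathbb{R}^{k\times k})$. This $Q_{\varepsilon}$ has all the required properties.

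I do not expect a genuine obstacle: the substance is entirely in Lemma \ref{Lem:10} (the diagonal subsequence construction driven by the Millionshikov rotation method), and the step above is pure bookkeeping. The only point deserving a line of care is that Remark \ref{Rem:eps} is stated for $Q$ with $\Vert Q\Vert_{\infty}<\varepsilon'$, whereas the $Q$ from Lemma \ref{Lem:10} is only controlled termwise by $\Vert Q(s_{i})\Vert\leq\varepsilon_{i}b$ and may exceed that bound; but the invariance of the Bohl exponents under modification at finitely many times — which is all the remark actually invokes — needs only $Q(\ell)\to 0$, not smallness of $Q$ itself, and the smallness $\Vert Q_{\eta}\Vert_{\infty}<\varepsilon'$ of the truncation is exactly what makes $B+Q_{\eta}$ a Lyapunov sequence. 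So the argument goes through verbatim.
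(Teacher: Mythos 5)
Your proposal is correct and follows exactly the route the paper intends: the paper supplies no explicit proof of Corollary~\ref{C2}, but merely states that it follows from Lemma~\ref{Lem:10} together with the truncation argument in Remark~\ref{Rem:eps}, which is what you carry out. You also correctly notice and resolve a small mismatch the paper glosses over, namely that Remark~\ref{Rem:eps} is formally stated under the hypothesis $\Vert Q\Vert_\infty<\varepsilon'$, which the $Q$ produced by Lemma~\ref{Lem:10} need not satisfy; your observation that the invariance of Bohl exponents rests only on $Q$ and $Q_\eta$ differing at finitely many indices (guaranteed by $Q(\ell)\to 0$) together with $B+Q$ already being Lyapunov by Lemma~\ref{Lem:10}(ii), while the smallness $\Vert Q_\eta\Vert_\infty<\varepsilon'$ alone makes $B+Q_\eta$ Lyapunov, is precisely the right way to close this gap.
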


\section{Upper triangularization and subsystems}

In this section we define an equivalence relation between systems which preserves Bohl exponents and we show that each equivalence class contains an upper triangular system. This tool is important in the proof of our main result to repeatedly construct perturbations for upper triangular systems which then carry over their dynamic properties to equivalent systems.

\begin{definition}[Dynamic equivalence]\label{def:dynamic-equivalence}
Let $A, B \in \mathcal{L}^{\mathrm{Lya}}(\mathbb{N},\mathbb{R}^{d\times d})$. The two systems
\begin{equation}\label{two-systems}
      x(n+1) = A(n) x(n)
      \quad \text{and} \quad
      y(n+1) = B(n) y(n),
      \qquad
      n \in \mathbb{N},
\end{equation}
are called \emph{dynamically equivalent} (or \emph{kinematically similar}), if there exists $T \in \mathcal{L}^{\mathrm{Lya}}(\mathbb{N},\mathbb{R}^{d\times d})$ with
\begin{equation*}
   B(n)
   =
   T(n+1)^{-1} A(n) T(n),
   \quad
   n \in \mathbb{N}.
\end{equation*}
$T$ is called \emph{Lyapunov transformation} between the two systems \eqref{two-systems}. The two systems \eqref{two-systems} are said to be \emph{dynamically equivalent} (or \emph{kinematically similar) via $T$}.
\end{definition}

To prepare the construction of an upper triangular system consider a
$k$-dimensional subspace $L\neq \left\{ 0\right\} $ of $\mathbb{R}^{d}$ and
let $\left\{ l_{1},\dots,l_{k}\right\} $ be a base of $L.$ It is well-known that there are vectors $l_{k+1},\dots,l_{d}\in $ $\mathbb{R}^{d}$ such
that $\left\{ l_{1},\dots,l_{d}\right\} $ is a base of $\mathbb{R}^{d}.$ We
define
\begin{equation*}
l_{i}(n)=\Phi _{A}\left( n,0\right) l_{i},\quad n\in \mathbb{N},
i=1,\dots,d.
\end{equation*}%
Since the $A(n)$ are invertible,  the vectors $\left\{
l_{1}(n),\dots,l_{d}(n)\right\} $ form a base of $\mathbb{R}^{d}$ for each $%
n\in \mathbb{N}$. Now we will orthonormalize the base $\left\{
l_{1}(n),\dots,l_{d}(n)\right\} $ for each $n\in \mathbb{N}$ using the
Gram--Schmidt process. We define vectors $\overline{l}_{1}(n),\dots,%
\overline{l}_{d}(n)\in $ $\mathbb{R}^{d}$ as follows
\begin{equation*}
\widetilde{l}_{1}(n)=l_{1}(n), \quad \overline{l}_{1}(n)=\frac{\widetilde{l}%
_{1}(n)}{\big\Vert \widetilde{l}_{1}(n)\big\Vert },
\end{equation*}%
\begin{equation*}
\widetilde{l}_{i}(n)=l_{i}(n)-\sum_{j=1}^{i-1}\left\langle l_{i}(n),%
\overline{l}_{j}(n)\right\rangle \overline{l}_{j}(n),\quad \overline{l}%
_{i}(n)=\frac{\widetilde{l}_{i}(n)}{\big\Vert \widetilde{l}%
_{i}(n)\big\Vert }, i=2,\dots,d.
\end{equation*}%
Then we have
\begin{equation}
l_{1}(n)=\overline{l}_{1}(n)\big\Vert \widetilde{l}_{1}(n)\big\Vert ,
\label{GS2}
\end{equation}%
\begin{equation}
l_{i}(n)=\overline{l}_{i}(n)\big\Vert \widetilde{l}_{i}(n)\big\Vert
+\sum_{j=1}^{i-1}\left\langle l_{i}(n),\overline{l}_{j}(n)\right\rangle
\overline{l}_{j}(n).  \label{GS3}
\end{equation}%
It is well-known that the vectors $\left\{ \overline{l}_{1}(n),\dots,%
\overline{l}_{d}(n)\right\}$ form an orthonormal base of $\mathbb{R}^{d}.
$ If we define $V(n)$ and $U(n)$ to be the matrices whose columns are
\begin{equation*}
l_{1}(n),\dots,l_{d}(n)
\quad\text{and}\quad
\overline{l}_{1}(n),\dots,\overline{l}_{d}(n),
\end{equation*}%
respectively,
then (\ref{GS2})-(\ref{GS3}) may be rewritten in the following form%
\begin{equation*}
V(n)=U(n)C(n),
\end{equation*}%
where
\begin{equation*}
C(n)=\left[
\begin{array}{cccc}
\big\Vert \widetilde{l}_{1}(n)\big\Vert  & \left\langle l_{2}(n),%
\overline{l}_{1}(n)\right\rangle  & \dots & \left\langle l_{d}(n),\overline{l}%
_{1}(n)\right\rangle  \\
0 & \big\Vert \widetilde{l}_{2}(n)\big\Vert  & \dots & \left\langle
l_{d}(n),\overline{l}_{2}(n)\right\rangle  \\
\vdots  &  & \ddots & \vdots  \\
0 & \dots & \dots & \big\Vert \widetilde{l}_{d}(n)\big\Vert
\end{array}%
\right].
\end{equation*}%
In particular we have
\begin{equation}
\operatorname{span}\left\{ l_{1}(n),\dots,l_{i}(n)\right\} =\operatorname{span}\left\{ \overline{l}%
_{1}(n),\dots,\overline{l}_{i}(n)\right\} \quad, i=1,\dots,d.
\label{SG3.5}
\end{equation}%
By the definition of $U(n)$ and $C(n)$ it is clear that $U(n)$ is
orthonormal and $C(n)$ is upper triangular. Consider the sequence $B=\left(
B(n)\right) _{n\in \mathbb{N}}$ with
\begin{equation}
B(n)\coloneqq U^{T}(n+1)A(n)U(n)=U^{-1}(n+1)A(n)U(n).  \label{GS3.6}
\end{equation}%
Since $A\in \mathcal{L}^{\mathrm{Lya}}(\mathbb{N},\mathbb{R}^{d\times d})$
and $U(n)$ is orthonormal, $B\in \mathcal{L}^{\mathrm{Lya}}(\mathbb{N},%
\mathbb{R}^{d\times d}).$ Moreover
\begin{align*}
C(n+1)&=U^{-1}(n+1)V(n+1)=U^{-1}(n+1)A(n)V(n)
\\
&= U^{-1}(n+1)A(n)U(n)C(n)=B(n)C(n)
\end{align*}%
and therefore
\begin{equation*}
B(n)=C(n+1)C^{-1}(n).
\end{equation*}%
Consequently $B(n)$ is upper triangular due to the fact that $C(n)$ is upper triangular.

The above considerations show that system (\ref{1}) is dynamically
equivalent to the upper triangular system
\begin{equation}
   y(n+1)=B(n)y(n)
   \label{GS4}
\end{equation}
and the transformation
\begin{equation*}
   x(n)=U(n)y(n)
\end{equation*}
establishes this equivalence and also maps the subspace $L \subseteq \mathbb{R}^d$ of \eqref{1} to the  subspace $\mathbb{R}^k \times \{0\} \subseteq \mathbb{R}^d$ of system \eqref{GS4}, see also Lemma \ref{L-systemInvariance}.
Denote $B(n)=\left[ b_{ij}(n)\right] _{i,j=1,..,d}$, define
\begin{equation*}
   B_{1}(n) \coloneqq \left[ b_{ij}(n)\right] _{i,j=1,..,k}
\end{equation*}
and consider the system
\begin{equation}
   y_1(n+1)=B_{1}(n)y_1(n).
   \label{GS4.5}
\end{equation}

\begin{definition}
System (\ref{GS4.5}) with coefficient matrix $A_L \coloneqq B_1 \in \mathcal{L}^{\mathrm{Lya}}(\mathbb{N}, \mathbb{R}^{k \times k})$ is called $L$-subsystem of system (\ref{1}).
\end{definition}

The following two lemmas show that the sequence $(L, \Phi_A(1,0)L, \Phi_A(2,0)L, \dots)$ of subspaces of $\mathbb{R}^d$ is mapped by the dynamic equivalence to the constant sequence $(\mathbb{R}^k \times \{0\},\mathbb{R}^k \times \{0\},\mathbb{R}^k \times \{0\},\dots)$ which renders $\mathbb{R}^k \times \{0\}$ as an invariant subspace for the upper triangular system \eqref{GS4}. The restriction of \eqref{GS4} to this invariant subspace yields the $L$-subsystem \eqref{GS4.5} of \eqref{1}.

\begin{lemma}[$L$ becomes invariant under dynamic equivalence]
\label{L-systemInvariance}
We have
\begin{equation}
U^{-1}(n)\Phi _{A}\left( n,0\right) L=\operatorname{span}\left\{ e_{1},\dots,e_{k}\right\} ,
\label{GS6}
\end{equation}%
and%
\begin{equation}
\Phi _{B}\left( n,0\right) U^{-1}(0)L=\operatorname{span}\left\{ e_{1},\dots,e_{k}\right\}
\label{GS7}
\end{equation}%
for each $n\in \mathbb{N}$.
\end{lemma}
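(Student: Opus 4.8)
The plan is to reduce both identities to the Gram--Schmidt relation \eqref{SG3.5} and to the defining property of the orthogonalizing matrix $U(n)$, so that no real computation is needed.

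First I would prove \eqref{GS6}. Since $\Phi_A(n,0)$ is linear and $\{l_1,\dots,l_k\}$ is a base of $L$, the very definition $l_i(n) = \Phi_A(n,0)l_i$ gives $\Phi_A(n,0)L = \operatorname{span}\{l_1(n),\dots,l_k(n)\}$. Applying \eqref{SG3.5} with $i = k$ rewrites this as $\operatorname{span}\{\overline{l}_1(n),\dots,\overline{l}_k(n)\}$. Finally, because $U(n)$ is the invertible (in fact orthogonal) matrix whose $i$-th column is $\overline{l}_i(n)$, we have $U(n)e_i = \overline{l}_i(n)$, hence $U^{-1}(n)\overline{l}_i(n) = e_i$ for $i = 1,\dots,d$. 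Applying the linear isomorphism $U^{-1}(n)$ to the span of the first $k$ of the $\overline{l}_i(n)$ therefore yields $\operatorname{span}\{e_1,\dots,e_k\}$, which is \eqref{GS6}.

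For \eqref{GS7} I would first record the conjugacy of transition matrices obtained by telescoping the defining relation $B(n) = U^{-1}(n+1)A(n)U(n)$ from \eqref{GS3.6}: the internal factors $U(m)U^{-1}(m)$ cancel and one gets $\Phi_B(n,0) = U^{-1}(n)\Phi_A(n,0)U(0)$ for all $n \in \mathbb{N}$ (the case $n=0$ being the trivial $\Phi_B(0,0) = I = U^{-1}(0)IU(0)$). Then
\begin{equation*}
\Phi_B(n,0)U^{-1}(0)L = U^{-1}(n)\Phi_A(n,0)U(0)U^{-1}(0)L = U^{-1}(n)\Phi_A(n,0)L,
\end{equation*}
and the right-hand side equals $\operatorname{span}\{e_1,\dots,e_k\}$ by \eqref{GS6}, giving \eqref{GS7}.

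I do not expect any step to pose a genuine obstacle; the only care required is bookkeeping in the telescoping product (matching the index shift in \eqref{GS3.6}) and noting that $U(n)$ is invertible, so that $U^{-1}(n)$ is well defined and carries the orthonormal frame $\overline{l}_1(n),\dots,\overline{l}_d(n)$ to the standard basis $e_1,\dots,e_d$. Since every step is uniform in $n$, both identities hold for each $n \in \mathbb{N}$.
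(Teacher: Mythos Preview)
Your proof is correct and follows essentially the same approach as the paper: you use \eqref{SG3.5} with $i=k$ together with $U^{-1}(n)\overline{l}_i(n)=e_i$ to obtain \eqref{GS6}, and then deduce \eqref{GS7} from \eqref{GS6} via the conjugacy relation $\Phi_B(n,0)=U^{-1}(n)\Phi_A(n,0)U(0)$. The only difference is that you spell out the telescoping argument for this conjugacy, whereas the paper simply states the relation.
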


\begin{proof}
\bigskip Since $\left\{ l_{1},\dots,l_{k}\right\} $ is a base of $L$,
\begin{equation*}
L=\operatorname{span}\left\{ l_{1},\dots,l_{k}\right\}
\end{equation*}%
and
\begin{align*}
\Phi _{A}\left( n,0\right) L&=\operatorname{span}\left\{ \Phi _{A}\left( n,0\right)
l_{1},\dots,\Phi _{A}\left( n,0\right) l_{k}\right\}
\\
&=
\operatorname{span}\{ l_{1}(n),\dots,l_{k}(n)\}
\\
&=
\operatorname{span}\{ \overline{l}_{1}(n),\dots,\overline{l}_{k}(n)\}.
\end{align*}%
The last equality follows from (\ref{SG3.5}) with $i=k$. Finally, using the
fact that $U^{-1}(n)\overline{l}_{i}(n)=e_{i}$, $i=1,\dots,d,$ $n\in \mathbb{N}
$, we get
\begin{align*}
U^{-1}(n)\Phi _{A}\left( n,0\right) L&=\operatorname{span}\left\{ U^{-1}(n)\overline{l}%
_{1}(n),\dots,U^{-1}(n)\overline{l}_{k}(n)\right\}
\\
&=
\operatorname{span}\left\{ e_{1},\dots,e_{k}\right\} .
\end{align*}%
This proves (\ref{GS6}). Equality (\ref{GS7}) follows from (\ref{GS6}) and
the relation%
\begin{equation*}
U^{-1}(n)\Phi _{A}\left( n,0\right) U(0)=\Phi _{B}\left( n,0\right) .
\end{equation*}
\end{proof}

\begin{lemma}[$L$ becomes invariant under dynamic equivalence II]
\label{GS9}Each solution $\left( y\left( n,y_{0}\right) \right) _{n\in
\mathbb{N}}$ of system (\ref{GS4}) with $y_{0}\in \operatorname{span}\left\{
e_{1},\dots,e_{k}\right\} $ has the form%
\begin{equation}
y\left( n,y_{0}\right) =\left[
\begin{array}{c}
y_{1}\left( n,y_{0,1}\right)  \\
0
\end{array}%
\right] ,  \label{GS8}
\end{equation}%
where $\left( y_{1}\left( n,y_{0,1}\right) \right) _{n\in \mathbb{N}}$
is a solution of (\ref{GS4.5}) and $0$ is the zero vector of $\mathbb{R%
}^{d-k}$ and vice versa, if $\left( y_{1}\left( n,y_{0,1}\right) \right)
_{n\in \mathbb{N}}$ is a solution of (\ref{GS4.5}), then the formula (\ref%
{GS8}) gives the solution of (\ref{GS4}) with initial condition
\begin{equation}
y_{0}=\left[
\begin{array}{c}
y_{0,1} \\
0
\end{array}%
\right] .  \label{GS10}
\end{equation}
\end{lemma}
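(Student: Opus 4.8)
The plan is to prove both implications by a single induction on $n$, using only that the upper triangular matrix $B(n)$ is, a fortiori, block upper triangular with respect to the splitting $\mathbb{R}^d = \operatorname{span}\{e_1,\dots,e_k\} \oplus \operatorname{span}\{e_{k+1},\dots,e_d\}$.

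First I would record the block decomposition. Since $B(n) = [b_{ij}(n)]_{i,j=1,\dots,d}$ is upper triangular, $b_{ij}(n) = 0$ whenever $i > j$; in particular, for $i \in \{k+1,\dots,d\}$ and $j \in \{1,\dots,k\}$ we have $i > j$, so the lower-left $(d-k)\times k$ block of $B(n)$ vanishes and
\begin{equation*}
   B(n) = \begin{bmatrix} B_1(n) & \ast \\ 0 & \ast \end{bmatrix}, \qquad n \in \mathbb{N},
\end{equation*}
where $B_1(n) = [b_{ij}(n)]_{i,j=1,\dots,k}$ is the coefficient matrix of \eqref{GS4.5}.

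For the forward direction, let $y_0 \in \operatorname{span}\{e_1,\dots,e_k\}$, write $y_0 = [y_{0,1}^\top, 0^\top]^\top$ with $y_{0,1} \in \mathbb{R}^k$, and let $(y_1(n,y_{0,1}))_{n\in\mathbb{N}}$ be the solution of \eqref{GS4.5}. I claim \eqref{GS8} holds for all $n$, by induction. For $n = 0$ both sides equal $[y_{0,1}^\top, 0^\top]^\top$. Assuming $y(n,y_0) = [y_1(n,y_{0,1})^\top, 0^\top]^\top$, the block structure gives
\begin{equation*}
   y(n+1,y_0) = B(n)\, y(n,y_0) = \begin{bmatrix} B_1(n)\, y_1(n,y_{0,1}) \\ 0 \end{bmatrix} = \begin{bmatrix} y_1(n+1,y_{0,1}) \\ 0 \end{bmatrix},
\end{equation*}
where the entries above the block $B_1(n)$ annihilate the zero lower block and the lower-left block of $B(n)$ is zero. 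This completes the induction. The converse is the same computation read in the other direction: given a solution $(y_1(n,y_{0,1}))_{n\in\mathbb{N}}$ of \eqref{GS4.5}, define $(y(n))_{n\in\mathbb{N}}$ by the right-hand side of \eqref{GS8}; the identical one-line induction shows $y(n+1) = B(n)\, y(n)$ for all $n$ and $y(0) = [y_{0,1}^\top, 0^\top]^\top$, so $(y(n))_{n\in\mathbb{N}}$ is precisely the solution of \eqref{GS4} with the initial condition \eqref{GS10}.

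I do not expect a genuine obstacle here: the statement is a routine consequence of block-triangularity, so the argument is essentially the above induction. The only point deserving a word of justification is that the lower-left block of $B(n)$ vanishes, which is immediate from $B(n)$ being upper triangular as established just before the lemma; nothing about the particular Gram--Schmidt construction is needed beyond that fact.
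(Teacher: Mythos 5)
Your proof is correct and takes essentially the same approach as the paper: both rest on the block upper-triangular form of $B(n)$. The only cosmetic difference is that you run an explicit one-step induction in $n$, whereas the paper instead observes that the transition matrix $\Phi_B(n,m)$ inherits the block upper-triangular structure and reads off the forward direction from $y(n,y_0)=\Phi_B(n,0)y_0$, while proving the converse by the same one-step computation you use.
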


\begin{proof}
Suppose that $y_{0}\in \operatorname{span}\left\{ e_{1},\dots,e_{k}\right\} $, then $\left(
y\left( n,y_{0}\right) \right) _{n\in \mathbb{N}}$ is a solution of system
(\ref{GS4}). It is clear that $y_{0}$ has the following form
\begin{equation*}
\left[
\begin{array}{c}
y_{0,1} \\
0
\end{array}%
\right] .
\end{equation*}%
with $y_{0,1}\in \mathbb{R}^{k}.$ Let us denote
\begin{equation*}
B(n)=\left[
\begin{array}{cc}
B_{1}(n) & B_{12}(n) \\
0 & B_{2}(n)%
\end{array}%
\right] ,
\end{equation*}%
then $\Phi _{B}\left( n,m\right) $ has the following form
\begin{equation*}
\Phi _{B}\left( n,m\right) =\left[
\begin{array}{cc}
\Phi _{B_{1}}\left( n,m\right)  & \Psi _{12}(n,m) \\
0 & \Psi _{2}(n,m)%
\end{array}%
\right]
\end{equation*}%
and therefore
\begin{align*}
y\left( n,y_{0}\right) &=\left[
\begin{array}{cc}
\Phi _{B_{1}}\left( n,0\right)  & \Psi _{12}(n,0) \\
0 & \Psi _{2}(n,0)%
\end{array}%
\right] \left[
\begin{array}{c}
y_{0,1} \\
0
\end{array}%
\right]
\\
&=
\left[
\begin{array}{c}
\Phi _{B_{1}}\left( n,0\right) y_{0,1} \\
0
\end{array}%
\right] =\left[
\begin{array}{c}
y_{1}\left( n,y_{0,1}\right)  \\
0
\end{array}%
\right] .
\end{align*}%
Conversely, if $\left( y_{1}\left( n,y_{0,1}\right) \right) _{n\in
\mathbb{N}}$ is a solution of (\ref{GS4.5}) and we define
\begin{equation*}
y( n) =\left[
\begin{array}{c}
y_{1}\left( n,y_{0,1}\right)  \\
0
\end{array}%
\right], \quad n\in \mathbb{N}\text{,}
\end{equation*}%
then $y_{0}:=y(0)\in \operatorname{span}\left\{ e_{1},\dots,e_{k}\right\} $ and
\begin{align*}
y(n+1)&=\left[
\begin{array}{c}
y_{1}\left( n+1,y_{0,1}\right)  \\
0
\end{array}%
\right]
\\ &=
\left[
\begin{array}{c}
B_{1}(n)y_{1}\left( n,y_{0,1}\right)  \\
0
\end{array}%
\right] =\left[
\begin{array}{cc}
B_{1}(n) & B_{12}(n) \\
0 & B_{2}(n)%
\end{array}%
\right] \left[
\begin{array}{c}
y_{1}\left( n,y_{0,1}\right)  \\
0
\end{array}%
\right]
\\ &=
B(n)y(n).
\end{align*}%
\end{proof}

The following lemma states that the upper triangular normal form of system \eqref{1}, as well as an associated $L$-subsystem have Bohl exponents which are preserved under the dynamic equivalence and projection, respectively.

\begin{lemma}[Bohl exponents of $L$-subsystem]
\label{GS11}
We have
\begin{equation*}
   \overline{\beta }_{A_L}(\mathbb{R}^{\dim L}) = \overline{\beta }_{A}(L),
   \qquad
   \underline{\beta }_{A_L}(\mathbb{R}^{\dim L})=\underline{\beta }_{A}(L)
\end{equation*}
and
\begin{equation*}
   \overline{\beta }_{A_L}(y_{0,1})
   = \overline{\beta }_{B}(y_{0})
   = \overline{\beta }_{A}(U(0)y_{0}),
   \qquad
   \underline{\beta }_{A_L}(y_{0,1})=\underline{\beta }_{B}(y_{0})
   = \underline{\beta }_{A}(U(0)y_{0})
\end{equation*}
for any $y_{0}\in \operatorname{span}\left\{ e_{1},\dots,e_{k}\right\} $ and $y_{0,1}$ given
by (\ref{GS10}).
\end{lemma}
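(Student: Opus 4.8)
The plan is to reduce everything to the fact that the Gram--Schmidt matrices $U(n)$ are orthonormal, hence norm-preserving, and then to read off all six identities from Lemmas \ref{L-systemInvariance} and \ref{GS9}. First I would record the relation $\Phi _{B}(n,0)=U(n)^{-1}\Phi _{A}(n,0)U(0)$ for all $n\in \mathbb{N}$, which is exactly the identity used at the end of the proof of Lemma \ref{L-systemInvariance} and follows from $B(n)=U(n+1)^{-1}A(n)U(n)$ by induction. Consequently, for every $y_{0}\in \mathbb{R}^{d}$ the solutions of \eqref{1} and \eqref{GS4} are linked by $x(n,U(0)y_{0})=U(n)y(n,y_{0})$, and since each $U(n)$ is orthonormal,
\begin{equation*}
   \Vert x(n,U(0)y_{0})\Vert =\Vert y(n,y_{0})\Vert ,\qquad n\in \mathbb{N}.
\end{equation*}
In particular every difference quotient $\tfrac{1}{n-m}\ln \tfrac{\Vert x(n,x_{0})\Vert }{\Vert x(m,x_{0})\Vert }$ occurring in the definition of $\overline{\beta }_{A}$ and $\underline{\beta }_{A}$ coincides with the corresponding quotient for $B$ once $x_{0}=U(0)y_{0}$.

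Next I would treat the subspace identities. By Lemma \ref{L-systemInvariance} with $n=0$ we have $U(0)\operatorname{span}\{e_{1},\dots ,e_{k}\}=L$, so $y_{0}\mapsto U(0)y_{0}$ is a linear bijection of $\operatorname{span}\{e_{1},\dots ,e_{k}\}\setminus \{0\}$ onto $L\setminus \{0\}$ under which, by the displayed equality, all norm ratios are preserved exactly. Taking the suprema and infima that define the Bohl exponents therefore gives $\overline{\beta }_{A}(L)=\overline{\beta }_{B}(\operatorname{span}\{e_{1},\dots ,e_{k}\})$ and $\underline{\beta }_{A}(L)=\underline{\beta }_{B}(\operatorname{span}\{e_{1},\dots ,e_{k}\})$; applying the same reasoning to the one-dimensional subspace $\operatorname{span}\{U(0)y_{0}\}$ yields $\overline{\beta }_{A}(U(0)y_{0})=\overline{\beta }_{B}(y_{0})$ and $\underline{\beta }_{A}(U(0)y_{0})=\underline{\beta }_{B}(y_{0})$ for $y_{0}\in \operatorname{span}\{e_{1},\dots ,e_{k}\}\setminus \{0\}$.

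Finally I would pass from $B$ to $A_{L}=B_{1}$. Lemma \ref{GS9} says that the map $y_{0,1}\mapsto y_{0}$ given by \eqref{GS10} is a bijection of $\mathbb{R}^{k}=\mathbb{R}^{\dim L}$ onto $\operatorname{span}\{e_{1},\dots ,e_{k}\}$ which sends a solution of \eqref{GS4.5} to the solution of \eqref{GS4} obtained by padding with zeros; in particular $\Vert y(n,y_{0})\Vert =\Vert y_{1}(n,y_{0,1})\Vert $ for all $n$. Arguing with the suprema and infima exactly as above yields $\overline{\beta }_{A_{L}}(y_{0,1})=\overline{\beta }_{B}(y_{0})$, $\underline{\beta }_{A_{L}}(y_{0,1})=\underline{\beta }_{B}(y_{0})$, and likewise $\overline{\beta }_{A_{L}}(\mathbb{R}^{\dim L})=\overline{\beta }_{B}(\operatorname{span}\{e_{1},\dots ,e_{k}\})$, $\underline{\beta }_{A_{L}}(\mathbb{R}^{\dim L})=\underline{\beta }_{B}(\operatorname{span}\{e_{1},\dots ,e_{k}\})$. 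Chaining these with the identities from the previous paragraph gives all six claimed equalities.

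There is no serious obstacle here; the argument is pure bookkeeping. The only point worth a moment's care is that the two maps in play — $U(0)$ restricted to $\operatorname{span}\{e_{1},\dots ,e_{k}\}$ and the zero-padding $y_{0,1}\mapsto y_{0}$ of \eqref{GS10} — are linear bijections onto the relevant subspaces and preserve the norms of the corresponding solutions \emph{exactly}, which is precisely where orthonormality of the $U(n)$ and the block-triangular structure from Lemma \ref{GS9} enter, so that the suprema and infima defining $\overline{\beta }$ and $\underline{\beta }$ transport without introducing spurious constants. (A general Lyapunov transformation would preserve the exponents only up to bounded factors, which would still suffice since such factors are annihilated by the $\tfrac{1}{n-m}$ normalisation; here the cleaner statement comes for free.)
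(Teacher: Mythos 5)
Your argument is correct and follows essentially the same two-step structure as the paper: first relate the Bohl exponents of $A$ to those of the triangular normal form $B$, then relate $B$ to the $L$-subsystem $A_L=B_1$ via Lemma \ref{GS9}. The one place where you diverge from the paper is the first step: the paper dispatches it by citing Lemma 31 of \cite{CzornikEtal2023} (which asserts invariance of Bohl exponents under Lyapunov transformations), whereas you re-derive it directly from the identity $\Phi_B(n,0)=U(n)^{-1}\Phi_A(n,0)U(0)$ and orthonormality of the $U(n)$, which gives exact equality of the norm ratios entering the definitions. Your remark in parentheses — that a general Lyapunov transformation would only preserve ratios up to bounded factors, still killed by the $\tfrac{1}{n-m}$ normalisation — is precisely the content of the cited Lemma 31, so you have correctly identified why the black-box citation works, and your orthonormal shortcut makes the present lemma self-contained at the small cost of not covering the general Lyapunov case that the reference handles.
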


\begin{proof}
By Lemma 31 in \cite{CzornikEtal2023} we have
\begin{equation*}
\overline{\beta }_{A}(L)=\overline{\beta }_{B}(U^{-1}(0)L),\qquad
\underline{\beta }_{B_{1}}(\mathbb{R}^{k})=\underline{\beta }_{A}(L)
\end{equation*}%
and%
\begin{equation*}
\overline{\beta }_{B}(y_{0})=\overline{\beta }_{A}(U(0)y_{0}),\qquad%
\underline{\beta }_{B}(y_{0})=\underline{\beta }_{A}(U(0)y_{0}).
\end{equation*}%
Equality (\ref{GS7}) implies that
\begin{equation*}
U^{-1}(0)L=\operatorname{span}\left\{ e_{1},\dots,e_{k}\right\} .
\end{equation*}%
Using Lemma \ref{GS9} we conclude that
\begin{equation*}
\left\Vert y\left( n,y_{0}\right) \right\Vert = \Vert y_{1}(n,y_{0,1}) \Vert ,
\end{equation*}%
where $\left( y\left( n,y_{0}\right) \right) _{n\in \mathbb{N}}$ and $%
\left( y_{1}\left( n,y_{0,1}\right) \right) _{n\in \mathbb{N}}$ are solutions of (\ref{GS4}) and (\ref{GS4.5}), respectively, $y_{0}\in
\operatorname{span}\left\{ e_{1},\dots,e_{k}\right\} $ and $y_{0,1}$ is given by (\ref{GS10}%
). Now the conclusion of the lemma follows directly from the definition of
upper and lower Bohl exponents.
\end{proof}

\begin{lemma}[Dichotomies of $L$-subsystems]
\label{Lem:GS}
Let $A \in \mathrm{BD}^{d}$ and $L_{1} \oplus L_{2} = \mathbb{R}^d$ be the corresponding Bohl dichotomy splitting. Then

(i) $A_{L_{1}} \in \mathrm{BD}^{\dim L_{1}}$ and $A_{L_{2}} \in \mathrm{BD}^{\dim L_{2}}$,

(ii) if $A \notin \mathrm{ED}^{d}$ then $A_{L_{1}} \notin \mathrm{ED}^{\dim L_{1}}$ or $A_{L_{2}} \notin \mathrm{ED}^{\dim L_{2}}$.
\end{lemma}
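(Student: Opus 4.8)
The plan is to derive both parts by transferring the Bohl-exponent inequalities that characterise the dichotomies of $A$ down to the $L_i$-subsystems via Lemma~\ref{GS11}, and, for part~(ii), back up again. By the ``moreover'' clause of Lemma~\ref{lem:BD-characterization}, the Bohl dichotomy splitting $L_1 \oplus L_2 = \mathbb{R}^d$ of $A$ satisfies $\sup_{x_0 \in L_1 \setminus \{0\}} \overline{\beta}_A(x_0) < 0$ and $\inf_{x_0 \in L_2 \setminus \{0\}} \underline{\beta}_A(x_0) > 0$. I would fix bases of $L_1$ and $L_2$, form the corresponding upper triangular normal forms as in the construction preceding Lemma~\ref{GS11}, and write $k_i = \dim L_i$. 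Since $U(0)$ restricts to a linear isomorphism $\operatorname{span}\{e_1,\dots,e_{k_1}\} \to L_1$ (by \eqref{SG3.5} with $n=0$, using $\Phi_A(0,0)=I$) while the projection $y_0 \mapsto y_{0,1}$ is a linear isomorphism $\operatorname{span}\{e_1,\dots,e_{k_1}\} \to \mathbb{R}^{k_1}$, Lemma~\ref{GS11} turns the two inequalities above into $\sup_{x_0 \in \mathbb{R}^{k_1} \setminus \{0\}} \overline{\beta}_{A_{L_1}}(x_0) < 0$ and $\inf_{x_0 \in \mathbb{R}^{k_2} \setminus \{0\}} \underline{\beta}_{A_{L_2}}(x_0) > 0$. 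For part~(i) I would apply Lemma~\ref{lem:BD-characterization} in the reverse direction: the first inequality is precisely condition~(ii) of that lemma for $A_{L_1}$ with the trivial splitting $\mathbb{R}^{k_1} \oplus \{0\}$ (the infimum over $\{0\}\setminus\{0\}$ being $+\infty$), so $A_{L_1} \in \mathrm{BD}^{k_1}$; symmetrically $A_{L_2} \in \mathrm{BD}^{k_2}$ via the splitting $\{0\} \oplus \mathbb{R}^{k_2}$.

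For part~(ii) I would argue the contrapositive: assume $A_{L_1} \in \mathrm{ED}^{k_1}$ and $A_{L_2} \in \mathrm{ED}^{k_2}$ and deduce $A \in \mathrm{ED}^d$. Combining $\sup_{x_0 \in \mathbb{R}^{k_1}\setminus\{0\}} \overline{\beta}_{A_{L_1}}(x_0) < 0$ with the assumed exponential dichotomy of $A_{L_1}$, Lemma~\ref{sup_ED}(i) yields $\overline{\beta}_{A_{L_1}}(\mathbb{R}^{k_1}) < 0$, hence $\overline{\beta}_A(L_1) < 0$ by Lemma~\ref{GS11}; dually, Lemma~\ref{sup_ED}(ii) together with Lemma~\ref{GS11} yields $\underline{\beta}_A(L_2) > 0$. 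Then Lemma~\ref{lem:ED-characterization}(ii), applied to $A$ with the splitting $L_1 \oplus L_2 = \mathbb{R}^d$, gives $A \in \mathrm{ED}^d$, which is the contrapositive of (ii).

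The proof is essentially an assembly of the preceding results, so I do not expect a genuine obstacle; the one step that needs care is the identification of the supremum (resp.\ infimum) of the Bohl exponents over the whole state space of a subsystem with the corresponding quantity over $L_1$ (resp.\ $L_2$) for $A$, which is exactly what the isomorphism statements built into the construction around Lemma~\ref{GS11} provide. I would note that the argument assumes $L_1,L_2 \neq \{0\}$; if one of them is trivial, the other subsystem is dynamically equivalent to $A$, and then both claims are immediate from the characterisations of $\mathrm{BD}$ and $\mathrm{ED}$ in terms of Bohl exponents.
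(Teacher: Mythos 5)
Your proof is correct and follows essentially the same route as the paper: invoke the ``moreover'' clause of Lemma~\ref{lem:BD-characterization} to get the pointwise Bohl-exponent inequalities on $L_1,L_2$, push them through Lemma~\ref{GS11} to the subsystems to obtain (i), and for (ii) argue by contradiction using Lemma~\ref{sup_ED} and Lemma~\ref{GS11} to feed back into Lemma~\ref{lem:ED-characterization}(ii). Your extra care about the isomorphism underlying Lemma~\ref{GS11} and the degenerate case $L_i=\{0\}$ (which the paper implicitly excludes since the $L$-subsystem is only constructed for $L\neq\{0\}$) is a reasonable addition but not a deviation in approach.
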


\begin{proof}
From Lemma \ref{lem:BD-characterization}(ii) we know that
\begin{equation*}
   \sup_{x_{0}\in L_{1}\setminus \{0\}}\overline{\beta }_{A}(x_{0})<0
   \qquad\text{and}\qquad
   \inf_{x_{0}\in L_{2}\setminus \{0\}}\underline{\beta }_{A}(x_{0})>0.
\end{equation*}
The last two inequalities together with Lemma \ref{GS11} yield
\begin{equation*}
   \sup_{x_{0} \in \mathbb{R}^{\dim L_{1}}\setminus \{0\}}\overline{\beta }_{A_{L_{1}}}(x_{0}) < 0
   \qquad\text{and}\qquad
   \inf_{x_{0} \in \mathbb{R}^{\dim L_{2}}\setminus \{0\}}\underline{\beta }_{A_{L_{2}}}(x_{0}) > 0.
\end{equation*}
Using again Lemma \ref{lem:BD-characterization}(ii) we conclude that $A_{L_{1}} \in \mathrm{BD}^{\dim L_{1}}$ and $A_{L_{2}} \in \mathrm{BD}^{\dim L_{2}}$.

Suppose that (ii) does not hold, i.e.
\begin{equation*}
   A_{L_{1}} \in \mathrm{ED}^{\dim L_{1}}
   \qquad\text{and}\qquad
   A_{L_{2}} \in \mathrm{ED}^{\dim L_{2}}.
\end{equation*}
This implies by Lemma \ref{sup_ED} that
\begin{equation*}
   \overline{\beta }_{A_{L_{1}}}(\mathbb{R}^{\dim L_{1}}) < 0
   \qquad \text{and} \qquad
   \underline{\beta }_{A_{L_{2}}}(\mathbb{R}^{\dim L_{2}}) > 0
\end{equation*}
and by Lemma \ref{GS11} we have
\begin{equation*}
\overline{\beta }_{A}(L_{1})<0
\qquad \text{and} \qquad
\underline{\beta }_{A}(L_{2})>0.
\end{equation*}%
The last inequalities, together with Lemma \ref%
{lem:ED-characterization}(ii) means that $A\in \mathrm{ED}^{d}.$ The obtained
contradiction completes the proof.
\end{proof}

\begin{lemma}[Lifting a perturbation from $A_L$ to $A$, I]
\label{lem:subsystem_0}
Let $A\in \mathcal{L}^{\mathrm{Lya}}(\mathbb{N},%
\mathbb{R}^{d\times d})$ and $L$ is a $k$-dimensional subspace of $\mathbb{R}%
^{d}$ such that there exist $Q_{1}\in \mathcal{L}^{\infty }(\mathbb{N},%
\mathbb{R}^{k\times k})$ and $x_{0,1}\in \mathbb{R}^{k}\backslash \{0\}$ with

(i) $\underset{l\rightarrow \infty }{\lim }Q_{1}(l)=0$,

(ii) $A_{L}+Q_{1}\in \mathcal{L}^{\mathrm{Lya}}(\mathbb{N}%
,\mathbb{R}^{k\times k})$,

(iii) $\underline{\beta }_{A_{L} + Q_1}(x_{0,1})\leq 0\leq
\overline{\beta }_{A_{L} + Q_1}(x_{0,1}).$

Then there exist $Q\in \mathcal{L}^{\infty }(\mathbb{N},\mathbb{R}^{d\times
d})$ and $x_{0}\in \mathbb{R}^{d}\backslash \{0\}$ with

(i) $\underset{l\rightarrow \infty }{\lim }Q(l)=0$,

(ii) $A+Q\in \mathcal{L}^{\mathrm{Lya}}(\mathbb{N},\mathbb{R}^{d\times d})$,

(iii) $\underline{\beta }_{A+Q}(x_{0})\leq 0\leq \overline{\beta }_{A+Q}(x_{0}).$

Moreover, if one of the inequalities in assumption (iii) is strict, then the
appropriate inequality in thesis (iii) is also strict.
\end{lemma}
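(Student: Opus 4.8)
The plan is to transport the perturbation $Q_{1}$ of the $L$-subsystem back to the full system by undoing the upper-triangularization. Recall from the construction leading to \eqref{GS4} that system \eqref{1} is dynamically equivalent, via the orthogonal Lyapunov transformation $U$, to the block upper triangular system \eqref{GS4} with coefficient $B(n)=\left(\begin{smallmatrix}B_{1}(n)&B_{12}(n)\\0&B_{2}(n)\end{smallmatrix}\right)$, where $B_{1}=A_{L}$. First I would pad $Q_{1}$ with zeros to a perturbation $\widetilde{Q}_{1}$ of $B$, setting $\widetilde{Q}_{1}(n)\coloneqq\left(\begin{smallmatrix}Q_{1}(n)&0\\0&0\end{smallmatrix}\right)\in\mathbb{R}^{d\times d}$, and then pull it back to a perturbation of $A$ by
\begin{equation*}
Q(n)\coloneqq U(n+1)\widetilde{Q}_{1}(n)U^{-1}(n),\qquad n\in\mathbb{N},
\end{equation*}
so that $A(n)+Q(n)=U(n+1)\big(B(n)+\widetilde{Q}_{1}(n)\big)U^{-1}(n)$ for all $n$. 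Finally set $\xi_{0}\coloneqq\left(\begin{smallmatrix}x_{0,1}\\0\end{smallmatrix}\right)\in\mathbb{R}^{d}$ and $x_{0}\coloneqq U(0)\xi_{0}$, which is nonzero because $U(0)$ is invertible and $x_{0,1}\neq 0$.

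For thesis (i): since every $U(n)$ is orthogonal the spectral norm is invariant under the conjugation, hence $\Vert Q(n)\Vert=\Vert\widetilde{Q}_{1}(n)\Vert=\Vert Q_{1}(n)\Vert\to 0$ by assumption (i). For thesis (ii) it suffices to show $B+\widetilde{Q}_{1}\in\mathcal{L}^{\mathrm{Lya}}(\mathbb{N},\mathbb{R}^{d\times d})$, since conjugating a Lyapunov sequence by the (bounded, boundedly invertible) orthogonal transformation $U$ produces a Lyapunov sequence, and this makes $A+Q$ dynamically equivalent to $B+\widetilde{Q}_{1}$ via $U$ in the sense of Definition \ref{def:dynamic-equivalence}. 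Now $B+\widetilde{Q}_{1}=\left(\begin{smallmatrix}B_{1}+Q_{1}&B_{12}\\0&B_{2}\end{smallmatrix}\right)$ is bounded; it is invertible at each $n$ because $B_{1}(n)+Q_{1}(n)=A_{L}(n)+Q_{1}(n)$ is invertible by assumption (ii) and $B_{2}(n)$ is invertible --- the latter because $B(n)$ is invertible and $B(n)^{-1}$ is again block upper triangular with bottom-right block $B_{2}(n)^{-1}$, whose norm is bounded uniformly in $n$ since $B^{-1}\in\mathcal{L}^{\infty}$. The explicit block-triangular inverse
\begin{equation*}
\big(B(n)+\widetilde{Q}_{1}(n)\big)^{-1}=\begin{pmatrix}(A_{L}(n)+Q_{1}(n))^{-1}&-(A_{L}(n)+Q_{1}(n))^{-1}B_{12}(n)B_{2}(n)^{-1}\\0&B_{2}(n)^{-1}\end{pmatrix}
\end{equation*}
is then bounded, using that $(A_{L}+Q_{1})^{-1}$ is bounded by (ii), that $B_{12}$ is bounded since $B$ is, and that $B_{2}^{-1}$ is bounded. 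Hence $B+\widetilde{Q}_{1}\in\mathcal{L}^{\mathrm{Lya}}(\mathbb{N},\mathbb{R}^{d\times d})$, and therefore so is $A+Q$.

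For thesis (iii) and the supplement on strict inequalities: the subspace $\operatorname{span}\{e_{1},\dots,e_{k}\}$ is invariant under $B+\widetilde{Q}_{1}$ and the dynamics it induces there is governed by $B_{1}+Q_{1}=A_{L}+Q_{1}$; indeed the proof of Lemma \ref{GS9} uses only the block upper triangular shape with vanishing lower-left block, which $B+\widetilde{Q}_{1}$ shares with $B$. Consequently the solution of $y(n+1)=(B(n)+\widetilde{Q}_{1}(n))y(n)$ with $y(0)=\xi_{0}$ has $\Vert y(n)\Vert=\Vert\Phi_{A_{L}+Q_{1}}(n,0)x_{0,1}\Vert$ for every $n$, and since $U$ is orthogonal and $A+Q$ is dynamically equivalent to $B+\widetilde{Q}_{1}$ via $U$, the solution of $x(n+1)=(A(n)+Q(n))x(n)$ with $x(0)=x_{0}=U(0)\xi_{0}$ has exactly the same norms. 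Therefore
\begin{equation*}
\underline{\beta}_{A+Q}(x_{0})=\underline{\beta}_{A_{L}+Q_{1}}(x_{0,1})\qquad\text{and}\qquad\overline{\beta}_{A+Q}(x_{0})=\overline{\beta}_{A_{L}+Q_{1}}(x_{0,1}),
\end{equation*}
so thesis (iii) follows at once from assumption (iii), and any strict inequality in the assumption passes verbatim to the conclusion.

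The argument is essentially bookkeeping and I do not expect a genuine obstacle. The two points needing a little care are: extracting the uniform bound on $B_{2}^{-1}$ from $B\in\mathcal{L}^{\mathrm{Lya}}$ via the block-triangular inverse formula (so that $B+\widetilde{Q}_{1}$ really is a Lyapunov sequence), and noting that Lemma \ref{GS9} --- stated for $B$ --- applies verbatim to $B+\widetilde{Q}_{1}$ because padding $Q_{1}$ with zeros preserves the block upper triangular structure with vanishing lower-left block. Everything goes through cleanly precisely because $U$ is orthogonal, hence isometric, so that solution norms, and with them the Bohl exponents, are preserved exactly rather than only up to bounded multiplicative factors.
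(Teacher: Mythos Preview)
Your proof is correct and follows essentially the same route as the paper: pad $Q_{1}$ with zeros to a block-diagonal perturbation of the upper-triangular system $B$, then conjugate back via the orthogonal $U$ and set $x_{0}=U(0)\xi_{0}$. You are in fact more careful than the paper in verifying thesis (ii) via the explicit block-triangular inverse, whereas the paper leaves the Lyapunov property of $A+Q$ implicit and appeals to Lemma~\ref{GS11} and an external lemma for the preservation of Bohl exponents where you argue directly from the isometry of $U$.
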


\begin{proof}
Consider system (\ref{GS4.5}).
We know that it is upper triangular and that $U$ establishes the dynamical equivalence between $A$ and $B$.
Let us perturb system (\ref{GS4.5}) by the perturbation $Q_{2}=\left( Q_{2}\left( n\right)\right) _{n\in \mathbb{N}}$, where
\begin{equation*}
Q_{2}\left( n\right) =\left[
\begin{array}{cc}
Q_{1}(n) & 0 \\
0 & 0
\end{array}
\right] .
\end{equation*}
It is clear that
\begin{equation*}
\lim_{\ell \rightarrow \infty }Q_{2}(\ell )=0.
\end{equation*}
By Lemma \ref{GS9} we know that $\left( y\left( n,y_{0}\right) \right)
_{n\in \mathbb{N}},$ where%
\begin{equation*}
y_{0}=\left[
\begin{array}{c}
x_{0,1} \\
0
\end{array}%
\right] \text{ and }y\left( n,y_{0}\right) =\left[
\begin{array}{c}
y_{1}\left( n,x_{0,1}\right)  \\
0
\end{array}%
\right] ,
\end{equation*}%
is a solution of system (\ref{GS4.5}).
From Lemma \ref{GS11} we get by the assumption (iii) that
\begin{equation}
\underline{\beta }_{B+Q_{2}}(y_{0})\leq 0\text{ and }\overline{\beta }%
_{B+Q_{2}}(y_{0})\geq 0.  \label{GS16}
\end{equation}
The Lyapunov transformation $U^{-1}$ establishes the dynamical equivalence
between $B+Q_{2}$ and $A+Q,$ where $Q=\left( Q(n)\right) _{n\in \mathbb{N}}$
and
\begin{equation}
Q(n)=U\left( n+1\right) Q_{2}(n)U^{-1}(n).  \label{GS22}
\end{equation}%
Observe that $\lim_{\ell \rightarrow \infty }Q(\ell )=0.$ Moreover by Lemma
31 in \cite{CzornikEtal2023} we have
\begin{equation*}
\underline{\beta }_{A+Q}(U(0)y_{0})=\underline{\beta }_{B+Q_{2}}(y_{0})\leq 0
\end{equation*}%
and%
\begin{equation*}
\overline{\beta }_{A+Q}(U(0)y_{0})=\overline{\beta }_{B+Q_{2}}(y_{0})\geq 0.
\end{equation*}%
Therefore the statement of the lemma is true with $Q$ defined by (\ref{GS22}%
) and $x_{0}=U(0)y_{0}.$ The proof of the case when one of the inequalities
in assumption (iii) is strict is analogical.
\end{proof}

The proof of the next Lemma is analogous to the proof of Lemma \ref{lem:subsystem_0}.

\begin{lemma}[Lifting a perturbation from $A_L$ to $A$, II]
\label{lem:subsystem_eps}
Let $A\in \mathcal{L}^{\mathrm{Lya}}(\mathbb{N},%
\mathbb{R}^{d\times d})$ and $L$ is a $k$-dimensional subspace of $\mathbb{R}%
^{d}$ such that for any $\varepsilon >0$ there exist $Q_{1}\in \mathcal{L}%
^{\infty }(\mathbb{N},\mathbb{R}^{k\times k})$ and $x_{0,1}\in \mathbb{R}%
^{k}\backslash \{0\}$ with

(i) $\left\Vert Q_{1}\right\Vert _{\infty }<\varepsilon $,

(ii) $A_{L} +Q_{1}\in \mathcal{L}^{\mathrm{Lya}}(\mathbb{N}%
,\mathbb{R}^{k\times k})$,

(iii) $\underline{\beta }_{A_{L}+Q_{1}}(x_{0,1})\leq 0\leq
\overline{\beta }_{A_{L}+Q_{1}}(x_{0,1}).$

Then for any $\varepsilon >0$ there exist $Q\in \mathcal{L}^{\infty }(%
\mathbb{N},\mathbb{R}^{d\times d})$ and $x_{0}\in \mathbb{R}^{d}\backslash
\{0\}$ with

(i) $\left\Vert Q\right\Vert _{\infty }<\varepsilon $,

(ii) $A+Q\in \mathcal{L}^{\mathrm{Lya}}(\mathbb{N},\mathbb{R}^{d\times d})$,

(iii) $\underline{\beta }_{A+Q}(x_{0})\leq 0\leq \overline{\beta }_{A+Q}(x_{0}).$

Moreover, if one of the inequalities in assumption (iii) is strict, then the
appropriate inequality in thesis (iii) is also strict.
\end{lemma}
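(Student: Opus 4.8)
The plan is to rerun the three-step argument from the proof of Lemma~\ref{lem:subsystem_0}, this time carrying the quantifier ``for any $\varepsilon>0$'' through each step. This works because the Lyapunov transformation $U$ used to bring \eqref{1} into the upper triangular form \eqref{GS4} consists of orthonormal matrices, so the conjugations performed below leave the spectral norm, and hence the size of the perturbation, unchanged.

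Fix $\varepsilon>0$ and apply the hypothesis to obtain $Q_{1}\in\mathcal{L}^{\infty}(\mathbb{N},\mathbb{R}^{k\times k})$ and $x_{0,1}\in\mathbb{R}^{k}\setminus\{0\}$ with $\|Q_{1}\|_{\infty}<\varepsilon$, $A_{L}+Q_{1}\in\mathcal{L}^{\mathrm{Lya}}(\mathbb{N},\mathbb{R}^{k\times k})$, and $\underline{\beta}_{A_{L}+Q_{1}}(x_{0,1})\leq 0\leq\overline{\beta}_{A_{L}+Q_{1}}(x_{0,1})$. As in the proof of Lemma~\ref{lem:subsystem_0}, perturb the upper triangular system \eqref{GS4} by
\[
   Q_{2}(n)=\left[
   \begin{array}{cc}
   Q_{1}(n) & 0\\
   0 & 0
   \end{array}
   \right],\qquad n\in\mathbb{N}.
\]
Then $B+Q_{2}$ is block upper triangular with $k\times k$ and $(d-k)\times(d-k)$ diagonal blocks $A_{L}+Q_{1}$ and $B_{2}$, where $B_{2}$ is the lower-right diagonal block of $B$ as in the proof of Lemma~\ref{GS9}. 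Since $(A_{L}+Q_{1})^{-1}$ is bounded by hypothesis and $B_{2}^{-1}$ is bounded (being a diagonal block of the bounded sequence $B^{-1}$), the block-triangular inversion formula yields $B+Q_{2}\in\mathcal{L}^{\mathrm{Lya}}(\mathbb{N},\mathbb{R}^{d\times d})$, and $\|Q_{2}\|_{\infty}=\|Q_{1}\|_{\infty}<\varepsilon$. Let $y_{0}$ be the vector with upper block $x_{0,1}$ and lower block $0$. By Lemma~\ref{GS9} the $(B+Q_{2})$-solution through $y_{0}$ has the same norm as the $(A_{L}+Q_{1})$-solution through $x_{0,1}$, so Lemma~\ref{GS11} turns assumption~(iii) into $\underline{\beta}_{B+Q_{2}}(y_{0})\leq 0\leq\overline{\beta}_{B+Q_{2}}(y_{0})$.

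Finally, transfer back to the coordinates of \eqref{1}: set $x_{0}=U(0)y_{0}$, which is nonzero because $y_{0}\neq 0$ and $U(0)$ is invertible, and put $Q(n)=U(n+1)Q_{2}(n)U^{-1}(n)$ for $n\in\mathbb{N}$. Orthonormality of each $U(n)$ makes this conjugation norm-preserving, so $\|Q(n)\|=\|Q_{2}(n)\|$ and $\|Q\|_{\infty}<\varepsilon$, which is~(i). The transformation $U^{-1}$ is a Lyapunov transformation establishing dynamic equivalence between $B+Q_{2}$ and $A+Q$, hence $A+Q\in\mathcal{L}^{\mathrm{Lya}}(\mathbb{N},\mathbb{R}^{d\times d})$, which is~(ii). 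By Lemma~31 in \cite{CzornikEtal2023} dynamic equivalence preserves Bohl exponents, so $\underline{\beta}_{A+Q}(x_{0})=\underline{\beta}_{B+Q_{2}}(y_{0})\leq 0$ and $\overline{\beta}_{A+Q}(x_{0})=\overline{\beta}_{B+Q_{2}}(y_{0})\geq 0$, which is~(iii); moreover if one of the inequalities in the hypothesis is strict, the corresponding equality of Bohl exponents makes the conclusion strict as well. Since $\varepsilon>0$ was arbitrary, the lemma follows. There is no genuine obstacle: the only points beyond transcribing the proof of Lemma~\ref{lem:subsystem_0} are the check that $B+Q_{2}$ is again a Lyapunov sequence and the remark that conjugation by the orthonormal matrices $U(n+1)$, $U(n)^{-1}$ does not change $\|Q_{2}(n)\|$, which is precisely what lets the bound $\|Q\|_{\infty}<\varepsilon$ come out with the same $\varepsilon$ as in the hypothesis.
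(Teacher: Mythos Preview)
Your proof is correct and follows exactly the approach the paper intends: it reruns the argument of Lemma~\ref{lem:subsystem_0} while tracking the $\varepsilon$-bound, using orthonormality of $U(n)$ to ensure $\|Q(n)\|=\|Q_2(n)\|=\|Q_1(n)\|$. The paper itself gives no separate proof, simply stating that it is analogous to Lemma~\ref{lem:subsystem_0}; your write-up just makes the two extra checks (that $B+Q_2$ is Lyapunov and that conjugation by orthogonal matrices preserves the norm) explicit.
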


\begin{lemma}[Perturbation with no Bohl dichotomy]
\label{lem:lya_perturbed_lower_v2_step3}Let $B\in \mathcal{L}^{\mathrm{Lya}}(%
\mathbb{N},\mathbb{R}^{k\times k})$ be such that

(i) $B\in \mathrm{BD}^{k}\mathrm{\backslash ED}^{k}$

(ii)$\underset{x_{0}\in \mathbb{R}^{k}\setminus \{0\}}{\inf }\overline{\beta
}_{B}(x_{0})<0$

(iii) $\underset{x_{0}\in \mathbb{R}^{k}\setminus \{0\}}{\sup }\underline{%
\beta }_{B}(x_{0})>0.$

Then for any $\varepsilon >0$ there exists a $Q\in \mathcal{L}^{\infty }(%
\mathbb{N},\mathbb{R}^{k\times k})$ with

(i) $\left\Vert Q\right\Vert _{\infty }<\varepsilon $,

(ii) $B+Q\in \mathcal{L}^{\mathrm{Lya}}(\mathbb{N},\mathbb{R}^{k\times k})$,

(iii) $B+Q\notin \mathrm{BD}^{k}\mathrm{.}$
\end{lemma}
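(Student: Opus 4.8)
The plan is to exploit the hypotheses (ii) and (iii) to produce, via the machinery already developed, a one-dimensional subspace whose upper and lower Bohl exponents straddle $0$ after an arbitrarily small perturbation; Corollary~\ref{cor:notBD} then yields (iii). Concretely, since $B\in\mathrm{BD}^k\setminus\mathrm{ED}^k$, let $L_1\oplus L_2=\mathbb R^k$ be the Bohl dichotomy splitting. By Lemma~\ref{lem:BD-characterization}(ii), $\sup_{x_0\in L_1\setminus\{0\}}\overline\beta_B(x_0)<0$ and $\inf_{x_0\in L_2\setminus\{0\}}\underline\beta_B(x_0)>0$, and by Lemma~\ref{Lem:GS}(ii) at least one of the subsystems $B_{L_1}$, $B_{L_2}$ fails to have an exponential dichotomy. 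Assume first $B_{L_1}\notin\mathrm{ED}^{\dim L_1}$. Since $B_{L_1}\in\mathrm{ED}$ would be forced if $\overline\beta_{B_{L_1}}(\mathbb R^{\dim L_1})<0$ (by Lemmas~\ref{GS11}, \ref{sup_ED}, \ref{lem:ED-characterization}), we must have $\overline\beta_{B_{L_1}}(\mathbb R^{\dim L_1})\geq 0$; and from $\sup_{x_0}\overline\beta_{B_{L_1}}(x_0)<0$ (Lemma~\ref{GS11}) together with assumption~(ii) we are exactly in the hypotheses of Lemma~\ref{lem:lya_perturbed}. If instead $\dim L_1\geq 2$ fails --- i.e.\ $\dim L_1=1$ --- then $\overline\beta_{B_{L_1}}(\mathbb R^1)=\overline\beta_{B_{L_1}}(x_0)<0$, contradicting $B_{L_1}\notin\mathrm{ED}$; so automatically $\dim L_1\geq 2$ in this case, and Corollary~\ref{C1} applies to $B_{L_1}$: for each $\varepsilon>0$ there is $Q_1$ with $\|Q_1\|_\infty\leq\varepsilon$, $B_{L_1}+Q_1\in\mathcal L^{\mathrm{Lya}}$, and some $x_{0,1}$ with $\underline\beta_{B_{L_1}+Q_1}(x_{0,1})<0\leq\overline\beta_{B_{L_1}+Q_1}(x_{0,1})$.

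Next I would lift this perturbation from the subsystem to the full system. Lemma~\ref{lem:subsystem_eps}, applied with $A=B$ and the subspace $L=L_1$, takes the family $\{Q_1\}$ just constructed and produces, for each $\varepsilon>0$, a perturbation $Q\in\mathcal L^\infty(\mathbb N,\mathbb R^{k\times k})$ with $\|Q\|_\infty<\varepsilon$, $B+Q\in\mathcal L^{\mathrm{Lya}}$, and $x_0\in\mathbb R^k\setminus\{0\}$ with $\underline\beta_{B+Q}(x_0)<0\leq\overline\beta_{B+Q}(x_0)$; moreover the moreover-clause preserves the strict inequality on the lower side. By Corollary~\ref{cor:notBD} this forces $B+Q\notin\mathrm{BD}^k$, which is exactly the conclusion.

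It remains to treat the complementary case $B_{L_2}\notin\mathrm{ED}^{\dim L_2}$. Here I would argue symmetrically using the "lower Bohl exponent" branch. Since $B_{L_2}\notin\mathrm{ED}$, Lemmas~\ref{GS11}, \ref{sup_ED}, \ref{lem:ED-characterization} force $\underline\beta_{B_{L_2}}(\mathbb R^{\dim L_2})\leq 0$; combined with $\inf_{x_0}\underline\beta_{B_{L_2}}(x_0)>0$ (from the Bohl splitting via Lemma~\ref{GS11}) and $\dim L_2\geq 2$ (forced as above, since $\dim L_2=1$ would give $\underline\beta_{B_{L_2}}(\mathbb R^1)>0$, contradicting $B_{L_2}\notin\mathrm{ED}$), this is the hypothesis of Lemma~\ref{lem:lya_perturbed_lower_v2_step1} followed by Lemma~\ref{Lem:10}. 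Chaining these (through Corollary~\ref{C2}) yields, for each $\varepsilon>0$, a perturbation $Q_1$ of $B_{L_2}$ with small norm, Lyapunov, and $\inf_{x_0}\underline\beta_{B_{L_2}+Q_1}(x_0)<0$; picking an $x_{0,1}$ realizing a negative lower Bohl exponent and noting $\overline\beta_{B_{L_2}+Q_1}(x_{0,1})\geq\underline\beta_{B_{L_2}+Q_1}(x_{0,1})$ is not quite enough --- instead one observes that on $L_2$ the original $\underline\beta_B\geq$ some positive number, so one should apply Lemma~\ref{lem:lya_perturbed_lower_v2_step1} and then \ref{Lem:10} to reach $\underline\beta_{B_{L_2}+Q_1}(x_{0,1})<0$, and since $B_{L_2}+Q_1$ is a small perturbation of a system with $\sup_{x_0}\underline\beta>0$, by assumption~(ii)-type reasoning $\overline\beta_{B_{L_2}+Q_1}(x_{0,1})\geq 0$ can be secured. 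Then Lemma~\ref{lem:subsystem_eps} with $L=L_2$ lifts to a perturbation of $B$ producing an $x_0$ with straddling exponents, and Corollary~\ref{cor:notBD} finishes.

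The main obstacle I anticipate is the second case: ensuring that after the lower-exponent perturbation of $B_{L_2}$ one genuinely obtains a \emph{single} direction $x_{0,1}$ on which both $\underline\beta\leq 0$ and $\overline\beta\geq 0$ hold, rather than merely $\inf_{x_0}\underline\beta<0$. Lemma~\ref{Lem:10} delivers only the infimum statement, so one must either extract the specific $v_0$ from its proof (where $\underline\beta_{B+Q}(v_0)<0$ is shown for an explicit $v_0$) and separately verify $\overline\beta_{B+Q}(v_0)\geq 0$ using that $Q$ is asymptotically small and the unperturbed system has $\sup_{x_0}\underline\beta_B(x_0)>0$ on that subspace --- hence no solution can decay faster than a fixed exponential rate in a Bohl sense, giving $\overline\beta\geq$ something $>-\infty$, which after a further harmless rescaling by $\mathrm e^{-\delta}$ can be pushed to be $\geq 0$ --- or, more cleanly, combine Lemmas~\ref{lem:lya_perturbed_lower_v2_step1} and~\ref{Lem:10} so that the resulting system still has $\inf_{x_0}\underline\beta_{B+Q}(x_0)>0$ failing only on the distinguished direction while $\overline\beta$ there is nonnegative by a growth lower bound. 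Getting this bookkeeping right, and confirming that $\dim L_i\geq 2$ in the relevant case so that the $k\geq 2$ hypotheses of the perturbation lemmas are met, is the delicate part; everything else is assembly of the cited results.
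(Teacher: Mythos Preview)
Your first case ($B_{L_1}\notin\mathrm{ED}$) is correct and coincides with the paper's argument: Corollary~\ref{C1} applies to $B_{L_1}$ since $\sup_{x_0}\overline\beta_{B_{L_1}}(x_0)<0$, $\overline\beta_{B_{L_1}}(\mathbb R^{\dim L_1})\geq 0$, and $\dim L_1\geq 2$, after which Lemma~\ref{lem:subsystem_eps} lifts the perturbation to $B$.

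The second case ($B_{L_2}\notin\mathrm{ED}$) has a genuine gap, which you correctly flag but do not close. Chaining Lemma~\ref{lem:lya_perturbed_lower_v2_step1} with Lemma~\ref{Lem:10} on $B_{L_2}$ produces only a direction $v_0$ with $\underline\beta_{B_{L_2}+Q}(v_0)<0$; nothing in those statements controls $\overline\beta_{B_{L_2}+Q}(v_0)$. Your suggested repairs fail: the fact that the unperturbed $B_{L_2}$ has $\inf_{x_0}\underline\beta>0$ gives no lower bound on the upper Bohl exponent of the \emph{perturbed} solution through $v_0$ (indeed Lemma~\ref{Lem:10} is exactly the statement that decaying perturbations can move individual Bohl exponents across zero), and a uniform rescaling by $\mathrm e^{-\delta}$ shifts $\underline\beta$ and $\overline\beta$ by the same amount, so it can never manufacture a sign change between them where none existed.

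The paper takes a structurally different route in this case. Rather than attacking $B_{L_2}$ with the lower-exponent lemmas, it sets $B^{(1)}:=B_{L_2}$, takes \emph{its} Bohl dichotomy splitting $L_1^{(1)}\oplus L_2^{(1)}$, and reapplies Lemma~\ref{Lem:GS}: if the $L_1^{(1)}$-subsystem fails ED one is back in case~(i) at a deeper level (apply Corollary~\ref{C1} there and lift via Lemma~\ref{lem:subsystem_eps} twice); otherwise one recurses into $B^{(2)}:=(B^{(1)})_{L_2^{(1)}}$, and so on. Termination is argued by strict decrease of $\dim L_2^{(\mu)}$, using that on one-dimensional spaces $\mathrm{BD}=\mathrm{ED}$; once case~(i) is reached at level $\mu$, Lemma~\ref{lem:subsystem_eps} is applied $\mu+1$ times to return to $B$. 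The idea you are missing is this recursive reduction to case~(i), not a direct construction of straddling exponents on the unstable side.
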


\begin{proof}
Let us fix $\varepsilon >0$ and let $L_{1} \oplus L_{2}$ be the decomposition of $\mathbb{R}^{k}$ from the definition of Bohl dichotomy for system $B$.
By the assumptions (ii) and (iii), the dimension of $L_1$ and of $L_2$ is greater or equal then 1.

To prove the lemma, we use a recursive argument and define $B^{(0)} := B$ and $L_1^{(0)} := L_1$, $L_2^{(0)} := L_2$.
Then by Lemma \ref{Lem:GS}, $B_{L_1^{(0)}}^{(0)}$ and $B_{L_2^{(0)}}^{(0)}$ have a Bohl dichotomy, and $B_{L_1^{(0)}}^{(0)}$ or $B_{L_2^{(0)}}^{(0)}$ has no exponential dichotomy.
Now for $\mu \in \mathbb N$ suppose that $B^{(\mu)}$ has a Bohl dichotomy with splitting $L_1^{(\mu)} \oplus L_2^{(\mu)}$ and at least one of the following two cases holds:

(i) $B_{L_1^{(\mu)}}^{(\mu)}$ has no exponential dichotomy or

(ii) $B_{L_2^{(\mu)}}^{(\mu)}$ has no exponential dichotomy.

Note that by Lemma \ref{Lem:GS}(i), the fact that $B^{(\mu)}$ has a Bohl dichotomy with splitting $L_1^{(\mu)} \oplus L_2^{(\mu)}$ implies that $B_{L_1^{(\mu)}}^{(\mu)}$ and $B_{L_2^{(\mu)}}^{(\mu)}$ also have a Bohl dichotomy.

In case (i) $r := \dim L_1^{(\mu)} \geq 2$, since the notions of Bohl and exponential dichotomy on one-dimensional subspaces coincide, $\underset{x_{0} \in \mathbb{R}^{r}\backslash \{0\}}{\sup }\overline{\beta }_{B_{L_{1}^{(\mu)}}^{(\mu)}}(x_{0}) < 0$ by Lemma \ref{lem:BD-characterization} and Lemma \ref{GS11} and $\overline{\beta}_{B_{L_{1}^{(\mu)}}^{(\mu)}}(\mathbb{R}^{r})\geq 0$ by Lemma \ref{lem:ED-characterization}.
Hence we can apply Corollary \ref{C1} to $B_{L_{1}}^{(\mu)}$ and any $x_{0,1} \in \mathbb R^r \setminus \{0\}$ and construct $Q_{\varepsilon }\in  \mathcal{L}^{\infty }(\mathbb{N},\mathbb{R}^{r\times r})$ such that $\left\Vert Q_{\varepsilon }\right\Vert _{\infty}\leq \varepsilon $, $B_{L_{1}^{(\mu)}}^{(\mu)}+Q_{\varepsilon }\in
\mathcal{L}^{\mathrm{Lya}}(\mathbb{N},\mathbb{R}^{r\times r})$ and
\begin{equation*}
   \underline{\beta }_{B_{L_{1}}^{(\mu)}+Q_{\varepsilon}}(x_{0,1}) \leq 0
   \qquad\text{and}\qquad
   \overline{\beta }_{B_{L_{1}}^{(\mu)}+Q_{\varepsilon }}(x_{0,1})\geq 0.
\end{equation*}
Next using Lemma \ref{lem:subsystem_eps} $\mu+1$ times we can construct $Q\in \mathcal{L}^{\infty }(\mathbb{N},\mathbb{R}^{k\times k})$ and $x_{0}\in\mathbb{R}^{k}$ such that $\left\Vert Q\right\Vert _{\infty }\leq \varepsilon $, $B+Q \in \mathcal{L}^{\mathrm{Lya}}(\mathbb{N},\mathbb{R}^{k \times k})$ and
\begin{equation*}
   \underline{\beta }_{B+Q}(x_{0})\leq 0
   \qquad\text{and}\qquad
   \overline{\beta }_{B+Q}(x_{0})\geq 0.
\end{equation*}
The last inequalities imply, in the light of Corollary \ref{cor:notBD}, that $B+Q\notin \mathrm{BD}^{k}$, which concludes the proof.

In case (ii), we set $B^{(\mu+1)} := B_{L_2^{(\mu)}}^{(\mu)}$ and continue with the recursion.
However, the recursion terminates eventually with case (i) since $\operatorname{dim} L_2^{(0)} > \operatorname{dim} L_2^{(1)} > \ldots > 1$, whereby the case $\operatorname{dim} L_2^{(\mu)} = 1$ cannot occur, since on one-dimensional spaces the notion of Bohl and exponential dichotomy coincide and $B^{(\mu)}$ has a Bohl dichotomy.
\end{proof}

\begin{lemma}[Decaying perturbation and exponential dichotomy]
\label{lem:ED_and_lim0}
Let $B\in \mathcal{L}^{\mathrm{Lya}}(\mathbb{N},\mathbb{R}^{k\times k})$ and $Q \in \mathcal{L}^\infty(\mathbb{N},\mathbb{R}^{k\times k})$, such that $B + Q \in \mathcal{L}^{\mathrm{Lya}}(\mathbb{N},\mathbb{R}^{k\times k})$ and $\lim\limits_{n\to\infty}Q(n) = 0$.
If $B \notin \mathrm{ED}^k$, then $B + Q \notin \mathrm{ED}^k$.
\end{lemma}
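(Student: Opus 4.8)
The plan is to prove the contrapositive: if $B+Q\in\mathrm{ED}^{k}$ then $B\in\mathrm{ED}^{k}$. This is the classical phenomenon that exponential dichotomy is not destroyed by a perturbation vanishing at infinity, and I would obtain it from two elementary properties of exponential dichotomy combined with a time shift. For $C\in\mathcal{L}^{\mathrm{Lya}}(\mathbb{N},\mathbb{R}^{k\times k})$ and $N\in\mathbb{N}$ write $\sigma^{N}C\coloneqq(C(n+N))_{n\in\mathbb{N}}$, which again lies in $\mathcal{L}^{\mathrm{Lya}}(\mathbb{N},\mathbb{R}^{k\times k})$ and satisfies $\Phi_{\sigma^{N}C}(n,m)=\Phi_{C}(n+N,m+N)$.

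The first property is that $C$ has an exponential dichotomy \emph{if and only if} $\sigma^{N}C$ has one. For the forward direction one checks directly from Definition \ref{ED}, using the displayed identity for $\Phi_{\sigma^{N}C}$, that if $C$ has an exponential dichotomy with constants $K,\alpha$ and splitting $L_{1}\oplus L_{2}$, then $\sigma^{N}C$ has an exponential dichotomy with the \emph{same} constants $K,\alpha$ and splitting $\Phi_{C}(N,0)L_{1}\oplus\Phi_{C}(N,0)L_{2}$. For the converse one pulls the splitting of $\sigma^{N}C$ back by the invertible matrix $\Phi_{C}(0,N)$ and absorbs the finitely many initial steps into the dichotomy constant, using that $\sup_{0\leq m\leq n\leq N}\Vert\Phi_{C}(n,m)\Vert<\infty$ because $C\in\mathcal{L}^{\mathrm{Lya}}$. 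The second property is the classical roughness theorem for exponential dichotomy (see \cite{CzornikEtal2023} and the references therein): if $C\in\mathcal{L}^{\mathrm{Lya}}(\mathbb{N},\mathbb{R}^{k\times k})$ has an exponential dichotomy with constants $K,\alpha$, and $R\in\mathcal{L}^{\infty}(\mathbb{N},\mathbb{R}^{k\times k})$ is such that $C+R\in\mathcal{L}^{\mathrm{Lya}}(\mathbb{N},\mathbb{R}^{k\times k})$ and $\Vert R\Vert_{\infty}<\varepsilon_{0}$, then $C+R\in\mathrm{ED}^{k}$, where the threshold $\varepsilon_{0}>0$ depends only on $K$, $\alpha$, $\Vert C\Vert_{\infty}$ and $\Vert C^{-1}\Vert_{\infty}$.

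With these two tools the argument is short. Assume $B+Q\in\mathrm{ED}^{k}$, say with constants $K,\alpha$. By the forward direction of the first property, every shifted system $\sigma^{N}(B+Q)$ has an exponential dichotomy with the same constants $K,\alpha$, and moreover $\Vert\sigma^{N}(B+Q)\Vert_{\infty}\leq\Vert B+Q\Vert_{\infty}$ and $\Vert(\sigma^{N}(B+Q))^{-1}\Vert_{\infty}\leq\Vert(B+Q)^{-1}\Vert_{\infty}$; hence the roughness threshold $\varepsilon_{0}$ provided by the second property can be chosen \emph{independently of} $N$. Since $Q(n)\to0$, pick $N$ with $\Vert\sigma^{N}Q\Vert_{\infty}=\sup_{n\geq N}\Vert Q(n)\Vert<\varepsilon_{0}$. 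Then $\sigma^{N}B=\sigma^{N}(B+Q)+(-\sigma^{N}Q)$ is a sup-norm perturbation of size $<\varepsilon_{0}$ of the exponentially dichotomic system $\sigma^{N}(B+Q)$, and $\sigma^{N}B\in\mathcal{L}^{\mathrm{Lya}}(\mathbb{N},\mathbb{R}^{k\times k})$ because $B\in\mathcal{L}^{\mathrm{Lya}}(\mathbb{N},\mathbb{R}^{k\times k})$; so $\sigma^{N}B\in\mathrm{ED}^{k}$ by roughness, and then $B\in\mathrm{ED}^{k}$ by the converse direction of the first property.

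The one point that genuinely needs care — and the reason the naive application of roughness to $B=(B+Q)-Q$ fails — is that $Q$ is only assumed to be a null sequence, not small in the sup norm; the shift is precisely what turns the decay of $Q$ into smallness, and this forces one to check that the roughness threshold is uniform over the shifts $\sigma^{N}(B+Q)$, which in turn uses that these shifts share the constants $K,\alpha$ and obey uniform norm bounds. If one wished to avoid quoting roughness as a black box, the same shift-then-perturb scheme can be carried out directly at the level of the dichotomy projections, but that essentially reproduces the proof of the roughness theorem.
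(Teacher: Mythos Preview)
Your proof is correct and rests on the same two ingredients as the paper's: roughness (openness) of $\mathrm{ED}^{k}$ and the fact that exponential dichotomy depends only on the tail of the coefficient sequence. The packaging differs, however. The paper argues the contrapositive without shifting: it fixes the roughness radius $\varepsilon$ around $B+Q$ itself, truncates $Q$ to $Q_{\varepsilon}$ by setting it to zero past the index $n_{0}$ where $\Vert Q(n)\Vert<\varepsilon$, observes that $B+Q_{\varepsilon}$ is a finite-time modification of $B$ (hence shares its non-ED status), and then gets a contradiction from $\Vert(B+Q)-(B+Q_{\varepsilon})\Vert_{\infty}<\varepsilon$. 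This sidesteps the one delicate point in your argument, namely the uniformity of the roughness threshold over all shifts $\sigma^{N}(B+Q)$; since the paper applies openness only once, at $B+Q$, no uniformity is needed. Your version is perfectly sound, but the paper's truncation trick is a shade more economical.
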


\begin{proof}
Suppose by contradiction that $B + Q \in \mathrm{ED}^k$.
The set $\mathrm{ED}^{k}$ is open in $\left( \mathcal{L}^{\mathrm{Lya}}(\mathbb{N},\mathbb{R}^{k\times k}),\left\Vert\cdot\right\Vert_{\infty}\right) $ (see e.g. \cite{B2}, Theorem 2.4).
Hence there exists $\varepsilon > 0$ such that $\Vert B + Q - \tilde B\Vert \leq \varepsilon$ implies $\tilde B \in \mathrm{ED}^k$.
Let $n_0 \in \mathbb N$, such that $\Vert Q(n)\Vert < \varepsilon$ for $n > n_0$ and define
\begin{equation*}
    Q_\varepsilon(n)
    :=
    \begin{cases}
       Q(n), &n \leq n_0,
       \\
       0, &n > n_0.
    \end{cases}
\end{equation*}
Then $B + Q_\varepsilon \notin \mathrm{ED}^k$, since $Q_\varepsilon(n)$ is different from zero for only finitely many $n \in \mathbb N$, but $\Vert B + Q - (B + Q_\varepsilon)\Vert = \Vert Q - Q_\varepsilon\Vert < \varepsilon$ implies $B + Q_\varepsilon \in \mathrm{ED}^k$.
\end{proof}

\section{Main result}

In this section we prove the main result of this paper which states that the
interior of $\mathrm{BD}^{d} \subset \mathcal{L}^{\infty }(\mathbb{N},\mathbb{R}^{d\times d})$ equals $\mathrm{ED}^{d}$.

\begin{theorem}\label{INT_BD=ED}
$\operatorname{int} \mathrm{BD}^{d}=\mathrm{ED}^{d}$.
\end{theorem}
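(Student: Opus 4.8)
The plan is to prove the two inclusions $\mathrm{ED}^{d} \subseteq \operatorname{int}\mathrm{BD}^{d}$ and $\operatorname{int}\mathrm{BD}^{d} \subseteq \mathrm{ED}^{d}$ separately. The first inclusion is the ``easy'' direction and follows from the known roughness of exponential dichotomy: if $A \in \mathrm{ED}^{d}$, then every $\tilde A$ with $\Vert A - \tilde A\Vert_\infty$ sufficiently small still has an exponential dichotomy (this is classical; alternatively it follows from the Bohl-exponent characterization in Lemma \ref{lem:ED-characterization} together with the fact that Bohl exponents of subspaces depend continuously on $A$ under small perturbations, using that $\mathcal{L}^{\mathrm{Lya}}$ is open in $\mathcal{L}^{\infty}$). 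Hence $\mathrm{ED}^{d}$ is open and, since $\mathrm{ED}^{d} \subseteq \mathrm{BD}^{d}$, we get $\mathrm{ED}^{d} \subseteq \operatorname{int}\mathrm{BD}^{d}$.

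For the reverse inclusion, I would argue by contraposition: suppose $A \in \mathrm{BD}^{d} \setminus \mathrm{ED}^{d}$ and show $A \notin \operatorname{int}\mathrm{BD}^{d}$, i.e.\ that arbitrarily small perturbations of $A$ leave $\mathrm{BD}^{d}$. Let $L_1 \oplus L_2 = \mathbb{R}^d$ be the Bohl dichotomy splitting of $A$. By Lemma \ref{Lem:GS}, both $L$-subsystems $A_{L_1}$ and $A_{L_2}$ have a Bohl dichotomy, and at least one of them — say $A_{L_1}$ (the case $A_{L_2}$ being handled symmetrically after passing to the time-reversed / inverse system, or by the analogous lower-exponent lemmas) — fails to have an exponential dichotomy. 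Writing $B := A_{L_1} \in \mathcal{L}^{\mathrm{Lya}}(\mathbb{N},\mathbb{R}^{k\times k})$ with $k = \dim L_1$, we have $B \in \mathrm{BD}^{k}\setminus\mathrm{ED}^{k}$; since Bohl dichotomy on a one-dimensional space coincides with exponential dichotomy, $k \geq 2$. The structure of the preparatory lemmas suggests the route: from $B \in \mathrm{BD}^{k}$ we have a splitting for $B$, and one wants to reduce, via Lemma \ref{lem:BD-characterization} and Lemma \ref{lem:ED-characterization} applied to the relevant sub-subsystem, to a situation where one of Corollary \ref{C1} or Corollary \ref{C2} applies to produce a small perturbation $Q_\varepsilon$ with $\Vert Q_\varepsilon\Vert_\infty \leq \varepsilon$ and a vector $x_0$ with $\underline\beta_{B+Q_\varepsilon}(x_0) \leq 0 \leq \overline\beta_{B+Q_\varepsilon}(x_0)$; by Corollary \ref{cor:notBD} this means $B + Q_\varepsilon \notin \mathrm{BD}^{k}$. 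Then Lemma \ref{lem:subsystem_eps}, applied iteratively along the chain of subsystems back up to dimension $d$ (this is exactly the recursion carried out in Lemma \ref{lem:lya_perturbed_lower_v2_step3}), lifts this to a perturbation $Q$ of $A$ with $\Vert Q\Vert_\infty \leq \varepsilon$ and $A + Q \notin \mathrm{BD}^{d}$. Since $\varepsilon$ was arbitrary, $A \notin \operatorname{int}\mathrm{BD}^{d}$.

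The subtle point is that Corollaries \ref{C1} and \ref{C2} require hypotheses on the $\sup$/$\inf$ of pointwise Bohl exponents over $\mathbb{R}^{k}\setminus\{0\}$ together with a sign condition on $\overline\beta_B(\mathbb{R}^k)$ or $\underline\beta_B(\mathbb{R}^k)$, and these need not hold outright for $B$ — one may first have to normalize the exponents (multiplying by a scalar sequence $\mathrm e^{\pm\delta}$ as in Lemma \ref{lem:lya_perturbed_lower_v2_step1}) and possibly descend further into subsystems using Lemma \ref{Lem:GS} before the hypotheses are met. Organizing this descent so that it terminates — the dimension strictly drops at each step where one stays in the ``good'' branch, and the one-dimensional case is excluded because there $\mathrm{BD} = \mathrm{ED}$ — is the main bookkeeping obstacle, and it is essentially the content of Lemma \ref{lem:lya_perturbed_lower_v2_step3}. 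I would therefore structure the proof of $\operatorname{int}\mathrm{BD}^{d}\subseteq\mathrm{ED}^{d}$ as: (1) contrapositive setup and reduction via Lemma \ref{Lem:GS} to a subsystem $B$ in $\mathrm{BD}\setminus\mathrm{ED}$; (2) invoke Lemma \ref{lem:lya_perturbed_lower_v2_step3} (or its $\overline\beta$-analogue) to get, for each $\varepsilon>0$, a perturbation of $B$ out of $\mathrm{BD}$; (3) lift via Lemma \ref{lem:subsystem_eps}; (4) conclude $A \notin \operatorname{int}\mathrm{BD}^{d}$, and combine with the first inclusion.
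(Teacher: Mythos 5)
Your overall skeleton matches the paper: the easy inclusion $\mathrm{ED}^{d}\subseteq\operatorname{int}\mathrm{BD}^{d}$ via openness of $\mathrm{ED}^{d}$, and the hard inclusion by taking $A\in\mathrm{BD}^{d}\setminus\mathrm{ED}^{d}$, passing via Lemma \ref{Lem:GS} to an $L$-subsystem, producing a small perturbation that leaves $\mathrm{BD}$, and lifting it with Lemma \ref{lem:subsystem_eps}. That much is right. But there is a real gap in how you propose to handle the $L_2$-subsystem.

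First, the two cases are not symmetric and cannot be collapsed by ``passing to the time-reversed / inverse system.'' Time reversal has no meaning for one-sided systems on $\mathbb{N}$, and the paper does not use it. In the $L_1$-case you have $\sup_{x_0}\overline{\beta}_{A_{L_1}}(x_0)<0$ together with $\overline{\beta}_{A_{L_1}}(\mathbb{R}^k)\geq 0$, which is exactly the hypothesis of Lemma \ref{lem:lya_perturbed}; Corollary \ref{C1} then yields a vector with $\underline{\beta}\leq 0\leq\overline{\beta}$, so Corollary \ref{cor:notBD} applies and you are done. The $L_2$-case requires an entirely different chain: Lemma \ref{lem:lya_perturbed_lower_v2_step1} to normalize from $\underline{\beta}_{A_{L_2}}(\mathbb{R}^k)\leq 0$ to strictly negative, then Lemma \ref{Lem:10} (the diagonal Millionshikov argument) to get a vector with $\underline{\beta}<0$.

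Second — and this is the bigger issue — your step ``one of Corollary \ref{C1} or Corollary \ref{C2} applies to produce $x_0$ with $\underline{\beta}_{B+Q_\varepsilon}(x_0)\leq 0\leq\overline{\beta}_{B+Q_\varepsilon}(x_0)$'' is false for Corollary \ref{C2}. That corollary only delivers $\inf_{x_0}\underline{\beta}_{B+Q}(x_0)<0$; it says nothing about the upper Bohl exponent of that $x_0$, so you cannot conclude $B+Q\notin\mathrm{BD}^{k}$ from Corollary \ref{cor:notBD}. After Lemma \ref{Lem:10}, the perturbed subsystem may well still be in $\mathrm{BD}^{k}$ with a nontrivial splitting, and the paper's proof spends its longest passage precisely on this: it checks $\overline{\beta}_{A_{L_2}+Q_4+Q_5}(y_{0,2})$, invokes Lemma \ref{lem:ED_and_lim0} to see the perturbed system is still outside $\mathrm{ED}$, and then branches into subcases in which Lemma \ref{lem:lya_perturbed_v2} or Lemma \ref{lem:lya_perturbed_lower_v2_step3} must be applied before Corollary \ref{cor:notBD} becomes usable. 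Lemma \ref{lem:lya_perturbed_lower_v2_step3} is not a general-purpose descent device as you describe it — its hypotheses (both $\inf\overline{\beta}<0$ and $\sup\underline{\beta}>0$) force a nontrivial splitting and thus exclude the trivial-splitting subsystem $A_{L_2}$ you start from. Without this post-$Q_5$ case analysis, the $L_2$-branch of your argument does not close.
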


The following result is a reformulation of Theorem \ref{INT_BD=ED}, since $\mathrm{ED}^d \subseteq \mathrm{BD}^d$.

\begin{corollary}
The boundary $\partial \mathrm{BD}^{d}$ equals $(\operatorname{cl}\mathrm{BD}^{d})\backslash \mathrm{ED}^{d}.$
\end{corollary}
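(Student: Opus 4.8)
The plan is to obtain this as a purely formal consequence of Theorem~\ref{INT_BD=ED} together with standard point-set topology; no new analytic input is required, since all the substantive work has already gone into establishing $\operatorname{int}\mathrm{BD}^{d}=\mathrm{ED}^{d}$.

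First I would invoke the identity valid for any subset $S$ of a topological space, namely
\begin{equation*}
\partial S = \operatorname{cl} S \cap \operatorname{cl}(S^{c}) = \operatorname{cl} S \cap (\operatorname{int} S)^{c} = (\operatorname{cl} S)\setminus(\operatorname{int} S),
\end{equation*}
where the middle step uses $\operatorname{cl}(S^{c}) = (\operatorname{int} S)^{c}$. Taking $S=\mathrm{BD}^{d}$ inside $\mathcal{L}^{\infty}(\mathbb{N},\mathbb{R}^{d\times d})$ gives $\partial\mathrm{BD}^{d} = (\operatorname{cl}\mathrm{BD}^{d})\setminus(\operatorname{int}\mathrm{BD}^{d})$. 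Then I would substitute $\operatorname{int}\mathrm{BD}^{d}=\mathrm{ED}^{d}$ from Theorem~\ref{INT_BD=ED}, obtaining $\partial\mathrm{BD}^{d} = (\operatorname{cl}\mathrm{BD}^{d})\setminus\mathrm{ED}^{d}$, which is exactly the assertion.

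To justify the word ``reformulation'' I would also record the converse implication: from $\partial\mathrm{BD}^{d}=(\operatorname{cl}\mathrm{BD}^{d})\setminus\mathrm{ED}^{d}$ together with the dual identity $\operatorname{int} S = (\operatorname{cl} S)\setminus\partial S$ one recovers $\operatorname{int}\mathrm{BD}^{d} = (\operatorname{cl}\mathrm{BD}^{d})\setminus\big((\operatorname{cl}\mathrm{BD}^{d})\setminus\mathrm{ED}^{d}\big) = (\operatorname{cl}\mathrm{BD}^{d})\cap\mathrm{ED}^{d} = \mathrm{ED}^{d}$, where the last equality uses the inclusion $\mathrm{ED}^{d}\subseteq\mathrm{BD}^{d}\subseteq\operatorname{cl}\mathrm{BD}^{d}$ noted immediately after Definition~\ref{bohl}. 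There is no genuine obstacle in this argument; the only points requiring (minimal) care are to apply the boundary/interior set identities correctly and to use $\mathrm{ED}^{d}\subseteq\mathrm{BD}^{d}$, the entire content residing in Theorem~\ref{INT_BD=ED} itself.
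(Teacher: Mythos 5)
Your argument is correct and is exactly what the paper intends: the corollary is stated there without proof as a ``reformulation'' of Theorem~\ref{INT_BD=ED}, relying implicitly on the identity $\partial S=(\operatorname{cl}S)\setminus(\operatorname{int}S)$ and on $\mathrm{ED}^{d}\subseteq\mathrm{BD}^{d}$, which you make explicit. Nothing is missing.
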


The set $\mathrm{BD}^{d}$ is not closed. Indeed,
consider the sequence $\left( A_{k}\right) _{k\in \mathbb{N}}$ in $\mathcal{L}^{\mathrm{Lya}}(\mathbb{N},\mathbb{R}^{d\times d})$ where $A_{k}$
is a constant sequence given by
\begin{equation*}
A_{k}=\operatorname{diag}\big[\mathrm e^{\frac{1}{k+1}},\dots,\mathrm e^{\frac{1}{k+1}}\big] .
\end{equation*}%
Each of the systems $A_{k}$ has a Bohl dichotomy (each of them has also
an exponential dichotomy), however, the limit system does not have a
Bohl dichotomy.

If \(d = 1\), then \(\mathrm{ED}^d = \mathrm{BD}^d\) (cf.\ \cite[Remark 24]{CzornikEtal2023}).
For \(d > 1\), however, we have the proper inclusion \(\mathrm{ED}^d \subsetneq \mathrm{BD}^d\).
For an example of a system \(A \in \mathrm{BD}^2 \setminus \mathrm{ED}^2\), confer\ \cite[Section 4]{Czornik2019}, and for \(d > 2\) consider for \(A \in \mathrm{BD}^2 \setminus \mathrm{ED}^2\) the system
\begin{equation*}
    \begin{pmatrix}
        A & 0 \\ 0 & \operatorname{diag}[2,\ldots,2]
    \end{pmatrix}
    \in \mathrm{BD}^d \setminus \mathrm{ED}^d.
\end{equation*}

We summarize the above considerations in the following Corollary.
\begin{corollary}
The set $\mathrm{BD}^{d}$ is not closed and $\operatorname{cl} \mathrm{ED}%
^{d}\nsubseteq \mathrm{BD}^{d}.$
If \(d > 1\), then $\mathrm{BD}^{d}$ is not open.
\end{corollary}

\begin{proof}
We have already mentioned that $\mathrm{BD}^{d}$ is not closed and to see $\operatorname{cl}\mathrm{ED}^{d} \nsubseteq \mathrm{BD}^{d}$, consider the sequence \((A_k)_{k\in\mathbb N}\) in \(\mathrm{ED}^d\) constructed above.
Lastly, let \(d > 1\) and assume to the contrary that \(\mathrm{BD}^d\) is open.
Then \(\mathrm{BD}^d = \operatorname{int}\mathrm{BD}^d = \mathrm{ED}^d\) by Theorem \ref{INT_BD=ED}.
This however, contradicts the proper inclusion \(\mathrm{ED}^d \subsetneq \mathrm{BD}^d\).
\end{proof}

\begin{proof}[Proof of Theorem \ref{INT_BD=ED}]
Let $A\in \mathrm{BD}^{d}\backslash \mathrm{ED}^{d}$ \ and let $L_{1}$ and $%
L_{2}$ be the subspaces from the definition of Bohl dichotomy. Then either $%
\underline{\beta }_{A}(L_{2})\leq 0$ or $\overline{\beta }_{A}(L_{1})$ $\geq
0,$ since otherwise $A\in \mathrm{ED}^{d}.$ We will show that in any neighborhood of $%
A$ there is a system which does not have a Bohl dichotomy. To do this let us
fix $\varepsilon >0.$ We will construct $Q\in \mathcal{L}^{\infty }(\mathbb{N%
},\mathbb{R}^{d\times d})$ such that $\left\Vert Q\right\Vert _{\infty }\leq
\varepsilon $ and the system $A+Q$ does not have a Bohl dichotomy.

If $\overline{\beta }_{A}(L_{1})$ $\geq 0,$ then denote by $k$ the
dimension of $L_{1}$ and consider the $L_{1}$-subsystem of (\ref{1}):
\begin{equation}
   y_{1}(n+1)=A_{L_1}(n)y_{1}(n).
   \label{GS14}
\end{equation}
By Lemma \ref{GS11} we know that
\begin{equation}
   \overline{\beta }_{A_{L_1}}(\mathbb{R}^{k})\geq 0.
   \label{GS12}
\end{equation}
By Lemma \ref{lem:BD-characterization}(ii), we know that
\begin{equation*}
   \sup_{x_{0}\in L_{1}\setminus \{0\}}\overline{\beta }_{A}(x_{0})<0.
\end{equation*}
The last inequality together with Lemma \ref{GS11} implies that
\begin{equation}
   \sup_{y_{0}\in \mathbb{R}^{k}\setminus \{0\}}\overline{\beta }_{A_{L_1}}(y_{0})<0.
   \label{GS13}
\end{equation}
Inequalities (\ref{GS12}) and (\ref{GS13}) show that system (\ref{GS14}) satisfies the assumptions of Lemma \ref{lem:lya_perturbed}.
When we fix a $y_{0,1}\in \mathbb{R}^{k}\setminus \{0\}$ and apply this Lemma to system \eqref{GS14} we obtain $Q_{1}\in \mathcal{L}^{\infty}(\mathbb{N},\mathbb{R}^{k\times k})$ with the following properties:
\begin{equation*}
   \lim_{\ell \rightarrow \infty }Q_{1}(\ell )=0,
   \qquad
   A_{L_1}+Q_{1}\in\mathcal{L}^{\mathrm{Lya}}(\mathbb{N},\mathbb{R}^{k\times k}),
\end{equation*}
and
\begin{equation}
   \underline{\beta }_{A_{L_1}+Q_{1}}(y_{0,1})<0
   \qquad\text{and}\qquad
   \overline{\beta }_{A_{L_1}+Q_{1}}(y_{0,1})\geq 0.
   \label{GS15}
\end{equation}
Now Lemma \ref{lem:subsystem_eps} gives us $Q\in \mathcal{L}^{\infty }(%
\mathbb{N},\mathbb{R}^{d\times d})$ and $x_{0}\in \mathbb{R}^{d}\backslash
\{0\}$ with $\left\Vert Q\right\Vert _{\infty }<\varepsilon $, $A+Q\in
\mathcal{L}^{\mathrm{Lya}}(\mathbb{N},\mathbb{R}^{k\times k})$ and
\begin{equation*}
\underline{\beta }_{A}(x_{0})\leq 0\leq \overline{\beta }_{A}(x_{0}).
\end{equation*}%
The last inequalities imply, in the light of Corollary \ref{cor:notBD}, that
$A+Q\notin \mathrm{BD}^{k}$.

Suppose now, that $\underline{\beta }_{A}(L_{2})\leq 0$. Denote by $k$
the dimension of $L_{2}$ and consider the $L_{2}$-subsystem of (\ref{1})%
\begin{equation}
   y_{2}(n+1)=A_{L_2}(n)y_{2}(n).
   \label{GS17}
\end{equation}
By Lemma \ref{GS11} we know that
\begin{equation}
   \underline{\beta }_{A_{L_2}}(\mathbb{R}^{k})\leq 0.
   \label{GS18}
\end{equation}
By Lemma \ref{lem:BD-characterization}(ii), we know that
\begin{equation*}
   \underset{x_{0}\in L_{2}\setminus \{0\}}{\inf }\underline{\beta }_{A}(x_{0}) > 0.
\end{equation*}
The last inequality together with Lemma \ref{GS11} implies that
\begin{equation}
   \underset{y_{0}\in \mathbb{R}^{k}\setminus \{0\}}{\inf }\underline{\beta}_{A_{L_2}}(y_{0})>0.
   \label{GS19}
\end{equation}
Inequalities \eqref{GS18} and \eqref{GS19} show that system \eqref{GS17} satisfies the assumptions of Lemma \ref{lem:lya_perturbed_lower_v2_step1}.
Applying this lemma we get $Q_{4}\in \mathcal{L}^{\infty }(\mathbb{N},\mathbb{R}^{k\times k})$ such that
\begin{equation*}
   \left\Vert Q_{4}\right\Vert _{\infty }<\frac{\varepsilon }{3},
   \qquad
   A_{L_2}+Q_{4}\in \mathcal{L}^{\mathrm{Lya}}(\mathbb{N},\mathbb{R}^{k\times k}),
\end{equation*}
and
\begin{equation}
   \inf_{x_{0}\in \mathbb{R}^{k}\setminus \{0\}}\underline{\beta }_{A_{L_2}+Q_{4}}(x_{0})>0
    \qquad\text{and}\qquad
   \underline{\beta }_{A_{L_2}+Q_{4}}(\mathbb{R}^{k})<0.
   \label{GS20}
\end{equation}
Observe that by Lemmas \ref{lem:BD-characterization} and \ref{lem:ED-characterization} the inequalities (\ref{GS20}) imply that $A_{L_2}+Q_{4}\in \mathrm{BD}^{k}\backslash \mathrm{ED}^{k}$.
The sequence $A_{L_2}+Q_{4}$ satisfies also the assumptions of Lemma \ref{Lem:10}, therefore there exists a $Q_{5}=\left( Q_{5}\left( n\right) \right)_{n\in \mathbb{N}}\in \mathcal{L}^{\infty }(\mathbb{N},\mathbb{R}^{k\times k})$ and $y_{0,2}\in \mathbb{R}^{k}\setminus \{0\}$ such that
\begin{equation*}
   \underset{l\rightarrow \infty }{\lim }Q_{5}(l)=0,
   \quad
   A_{L_2}+Q_4+Q_{5}\in \mathcal{L}^{\mathrm{Lya}}(\mathbb{N},\mathbb{R}^{k\times k})
\end{equation*}
and
\begin{equation}
   \underline{\beta }_{A_{L_2}+Q_{4}+Q_{5}}(y_{0,2}) < 0.
   \label{GS21}
\end{equation}
Without loss of generality we may assume that
\begin{equation*}
\left\Vert Q_{5}(n)\right\Vert <\frac{\varepsilon }{3},\qquad n\in \mathbb{N}.
\end{equation*}

Observe that $A_{L_2}+Q_{4}+Q_{5}\notin \mathrm{ED}^{k}$ by Lemma \ref{lem:ED_and_lim0}.
If
\begin{equation*}
\overline{\beta }_{A_{L_2}+Q_{4}+Q_{5}}(y_{0,2})\geq 0,
\end{equation*}
then the assumptions of Lemma \ref{lem:subsystem_eps} are satisfied and
therefore there exist $Q\in \mathcal{L}^{\infty }(\mathbb{N},\mathbb{R}%
^{d\times d})$ and $x_{0}\in \mathbb{R}^{d}\backslash \{0\}$ with $%
\left\Vert Q\right\Vert _{\infty }<\varepsilon $, $A+Q\in \mathcal{L}^{%
\mathrm{Lya}}(\mathbb{N},\mathbb{R}^{d\times d})$ and
\begin{equation*}
\underline{\beta }_{A}(x_{0})\leq 0\leq \overline{\beta }_{A}(x_{0})
\end{equation*}%
and in this case it follows from Corollary \ref{cor:notBD} that $A+Q\notin
\mathrm{BD}^{d}.$

So suppose that
\begin{equation*}
   \overline{\beta }_{A_{L_2}+Q_{4}+Q_{5}}(y_{0,2})<0.
\end{equation*}
If $A_{L_2}+Q_{4}+Q_{5}\notin \mathrm{BD}^{k}$, then we can obtain the desired perturbation by applying Lemma \ref{lem:subsystem_eps}.
Now consider the case $A_{L_2}+Q_{4}+Q_{5} \in \mathrm{BD}^{k} \setminus \mathrm{ED}^k$.
Then there are two subcases

1. For all $y_{0}\in \mathbb{R}^{k}\setminus \{0\}$ we have $\overline{\beta}_{A_{L_2}+Q_{4}+Q_{5}}(y_{0})<0.$

2. There exists $\overline{y}_{0}\in \mathbb{R}^{k}\setminus \{0\}$ such
that
\begin{equation}
   \overline{\beta }_{A_{L_2}+Q_{4}+Q_{5}}(\overline{y}_{0})>0.  \label{GS24}
\end{equation}

Note that we do not have to consider the case when there exists $\overline{y}_{0}\in
\mathbb{R}^{k}\setminus \{0\}$ such that $\overline{\beta }%
_{A_{L_2}+Q_{4}+Q_{5}}(\overline{y}_{0})=0,$ because then by Corollary \ref%
{cor:notBD} $A_{L_2}+Q_{4}+Q_{5}\notin \mathrm{BD}^{k}$ and we may again apply
Lemma \ref{lem:subsystem_eps} to get the desired $Q$.

In the first subcase
\begin{equation}
   \underset{y_{0}\in \mathbb{R}^{k}\setminus \{0\}}{\sup }\overline{\beta }_{A_{L_2}+Q_{4}+Q_{5}}(y_{0})\leq 0.  \label{GS25}
\end{equation}
The relation $A_{L_2}+Q_{4}+Q_{5}\notin \mathrm{ED}^{k}$ implies that
\begin{equation}
   \overline{\beta }_{A_{L_2}+Q_{4}+Q_{5}}\left( \mathbb{R}^{k}\right) \geq 0.
   \label{GS26}
\end{equation}

Inequalities (\ref{GS25}) and (\ref{GS26}) mean that $A_{L_2}+Q_{4}+Q_{5}$ satisfies the assumptions of Lemma \ref{lem:lya_perturbed_v2}.
Applying this lemma with $z_{0}=y_{0,2}$ we get  $Q_{6}\in \mathcal{L}^{\infty }(\mathbb{N},\mathbb{R}^{k\times k})$ with $\lim_{\ell \rightarrow \infty }Q_{6}(\ell )=0$, $A_{L_2}+Q_{4}+Q_{5}+Q_{6}\in \mathcal{L}^{\mathrm{Lya}}(\mathbb{N},\mathbb{R}^{k\times k})$ and
\begin{equation*}
   \underline{\beta }_{A_{L_2}+Q_{4}+Q_{5}+Q_{6}}(y_{0,2})\leq 0
   \qquad\text{and}\qquad
   \overline{\beta }_{A_{L_2}+Q_{4}+Q_{5}+Q_{6}}(y_{0,2})\geq 0.
\end{equation*}
Without loss of generality we may assume that
\begin{equation*}
   \left\Vert Q_{6}\right\Vert _{\infty }\leq \frac{\varepsilon }{3}.
\end{equation*}
System $A_{L_2}+Q_{4}+Q_{5}+Q_{6}$ satisfies the assumptions of Lemma \ref{lem:subsystem_eps} and therefore there exist $Q\in \mathcal{L}^{\infty }(\mathbb{N},\mathbb{R}^{d\times d})$ and $x_{0}\in \mathbb{R}^{d}\backslash
\{0\}$ with $\left\Vert Q\right\Vert _{\infty }<\varepsilon $, $A+Q \in \mathcal{L}^{\mathrm{Lya}}(\mathbb{N},\mathbb{R}^{d\times d})$ and
\begin{equation*}
   \underline{\beta }_{A}(x_{0})\leq 0\leq \overline{\beta }_{A}(x_{0}).
\end{equation*}
In this case it follows from Corollary \ref{cor:notBD} that $A+Q\notin\mathrm{BD}^{d}$.

Consider now the second case, i.e.\ the case when there exists $\overline{y}_{0}\in \mathbb{R}^{k}\setminus \{0\}$ such that (\ref{GS24}) is satisfied.
Note that $\underline{\beta}_{A_{L_2}+Q_4+Q_5}(\overline y_0) > 0$, as otherwise we would have a contradiction to $A_{L_2}+Q_4+Q_5 \in \mathrm{BD}^k$.
Hence $A_{L_2}+Q_{4}+Q_{5}$ satisfies the assumptions of Lemma \ref{lem:lya_perturbed_lower_v2_step3}.
Therefore there exists a $Q_{7} \in \mathcal{L}^{\infty }(\mathbb{N},\mathbb{R}^{k\times k})$ with $\left\Vert Q_{7}\right\Vert _{\infty}<\frac{\varepsilon }{3}$, $A_{L_2}+Q_{4}+Q_{5}+Q_{7}\in \mathcal{L}^{\mathrm{Lya}}(\mathbb{N},\mathbb{R}^{k\times k})$ and $A_{L_2}+Q_{4}+Q_{5}+Q_{7}\notin \mathrm{BD}^{k}$.
The desired perturbation may be now found by Lemma \ref{lem:subsystem_eps}.

In this way we have shown that in any neighborhood of a system $A\in \mathrm{BD}^{d}\backslash \mathrm{ED}^{d}$ there is a system that does not belong to $\mathrm{BD}^{d}$.
It implies that each system in $\mathrm{BD}^{d}\backslash\mathrm{ED}^{d}$ does not belong to $\operatorname{int}\mathrm{BD}^{d}$.
On the other hand the set $\mathrm{ED}^{d}$ is open in $\left( \mathcal{L}^{\mathrm{Lya}}(\mathbb{N},\mathbb{R}^{d\times d}),\left\Vert\cdot\right\Vert_{\infty}\right) $ (see e.g. \cite{B2}, Theorem 2.4) and therefore $\operatorname{int}\mathrm{BD}^{d}=\mathrm{ED}^{d}$.
\end{proof}

In \cite{CzornikEtal2023} the notion of Bohl Dichotomy Spectrum
\begin{equation*}
   \Sigma _{\mathrm{BD}}(A)
   \coloneqq
   \left\{ \gamma \in \mathbb{R}:x(n+1)=e^{-\gamma }A(n)x(n)%
    \text{ has no Bohl dichotomy}\right\}
\end{equation*}
and the corresponding resolvent $\varrho _{\mathrm{ED}}(A) \coloneqq \mathbb R\setminus \Sigma _{\mathrm{ED}}(A)$ is introduced.
Its relation to the Exponential Dichotomy Spectrum
\begin{equation*}
   \Sigma _{\mathrm{ED}}(A)
   \coloneqq
   \left\{ \gamma \in \mathbb{R} : x(n+1)=e^{-\gamma }A(n)x(n)%
    \text{ has no exponential dichotomy} \right\} 
\end{equation*}
and the corresponding resolvent $\varrho _{\mathrm{BD}}(A) \coloneqq \mathbb R\setminus \Sigma _{\mathrm{BD}}(A)$, is studied.
The following Corollary is an approximation result of the exponential dichotomy spectrum by the Bohl dichtomy spectrum.

\begin{corollary}[Approximating exponential by Bohl dichotomy spectra]
We have%
\begin{equation}
\underset{\varepsilon >0}{\bigcap }\text{ }\underset{%
\begin{array}{c}
Q\in \mathcal{L}^{\mathrm{\infty }}(\mathbb{N},\mathbb{R}^{d\times d}) \\
\left\Vert Q\right\Vert _{\infty }<\varepsilon
\end{array}%
}{\bigcup }\Sigma _{\mathrm{BD}}(A+Q)=\Sigma _{\mathrm{ED}}(A).  \label{c_1}
\end{equation}
\end{corollary}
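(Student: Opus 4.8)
The plan is to deduce \eqref{c_1} directly from Theorem~\ref{INT_BD=ED} by exploiting the homothety $B \mapsto \mathrm e^{-\gamma}B$ on $\mathcal L^{\infty}(\mathbb N,\mathbb R^{d\times d})$. Fix $\gamma\in\mathbb R$. For $Q\in\mathcal L^{\infty}(\mathbb N,\mathbb R^{d\times d})$ one has $\mathrm e^{-\gamma}(A+Q)=\mathrm e^{-\gamma}A+\mathrm e^{-\gamma}Q$, and by definition $\gamma\in\Sigma_{\mathrm{BD}}(A+Q)$ if and only if $\mathrm e^{-\gamma}A+\mathrm e^{-\gamma}Q\notin\mathrm{BD}^{d}$. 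Since $Q\mapsto\mathrm e^{-\gamma}Q$ is a linear bijection of $\mathcal L^{\infty}(\mathbb N,\mathbb R^{d\times d})$ that maps the open $\varepsilon$-ball around $0$ onto the open $\mathrm e^{-\gamma}\varepsilon$-ball around $0$, the set $\{\mathrm e^{-\gamma}(A+Q):\|Q\|_{\infty}<\varepsilon\}$ is precisely the open ball of radius $\mathrm e^{-\gamma}\varepsilon$ around $\mathrm e^{-\gamma}A$ in $\mathcal L^{\infty}(\mathbb N,\mathbb R^{d\times d})$.

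Combining these observations, $\gamma$ lies in the left-hand side of \eqref{c_1} if and only if for every $\varepsilon>0$ the open ball of radius $\mathrm e^{-\gamma}\varepsilon$ around $\mathrm e^{-\gamma}A$ fails to be contained in $\mathrm{BD}^{d}$; as $\varepsilon>0$ is arbitrary and $\varepsilon\mapsto\mathrm e^{-\gamma}\varepsilon$ is a bijection of $(0,\infty)$, this is exactly the statement that $\mathrm e^{-\gamma}A\notin\operatorname{int}\mathrm{BD}^{d}$. By Theorem~\ref{INT_BD=ED} we have $\operatorname{int}\mathrm{BD}^{d}=\mathrm{ED}^{d}$, so this is equivalent to $\mathrm e^{-\gamma}A\notin\mathrm{ED}^{d}$, i.e.\ to $\gamma\in\Sigma_{\mathrm{ED}}(A)$. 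This yields both inclusions of \eqref{c_1} simultaneously.

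The only point requiring care, and the one I would spell out, is the domain of definition: $\Sigma_{\mathrm{BD}}(A+Q)$ only makes sense for $A+Q\in\mathcal L^{\mathrm{Lya}}(\mathbb N,\mathbb R^{d\times d})$, and $\operatorname{int}\mathrm{BD}^{d}$ is the interior relative to $\mathcal L^{\mathrm{Lya}}(\mathbb N,\mathbb R^{d\times d})$. However, $\mathcal L^{\mathrm{Lya}}(\mathbb N,\mathbb R^{d\times d})$ is open in $\mathcal L^{\infty}(\mathbb N,\mathbb R^{d\times d})$ (as noted in the proof of Lemma~\ref{lem:lya_perturbed}), so for all sufficiently small $\varepsilon$ every $Q$ with $\|Q\|_{\infty}<\varepsilon$ already satisfies $A+Q\in\mathcal L^{\mathrm{Lya}}(\mathbb N,\mathbb R^{d\times d})$, and small balls around $\mathrm e^{-\gamma}A$ lie inside $\mathcal L^{\mathrm{Lya}}(\mathbb N,\mathbb R^{d\times d})$; since the intersection over $\varepsilon>0$ in \eqref{c_1} is governed by arbitrarily small $\varepsilon$, this restriction affects neither side. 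Beyond this bookkeeping there is no real obstacle: the whole content is imported from Theorem~\ref{INT_BD=ED} together with the openness of $\mathcal L^{\mathrm{Lya}}$ in $\mathcal L^{\infty}$.
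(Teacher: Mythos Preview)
Your proof is correct and follows essentially the same route as the paper's: both arguments reduce the identity to the characterization $\operatorname{int}\mathrm{BD}^{d}=\mathrm{ED}^{d}$ of Theorem~\ref{INT_BD=ED} via the scaling $Q\mapsto \mathrm e^{-\gamma}Q$. The only cosmetic difference is that the paper first passes to resolvents by De~Morgan and proves the two inclusions separately, whereas you run a single iff-chain on the spectra directly; your added remark on the openness of $\mathcal L^{\mathrm{Lya}}$ in $\mathcal L^{\infty}$ is a useful clarification that the paper leaves implicit.
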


\begin{proof}
Using definitions of Bohl dichotomy and exponential dichotomy resolvents as
well as the De Morgan's laws, equality (\ref{c_1}) may be rewritten as
follows%
\begin{equation*}
\underset{\varepsilon >0}{\bigcup }\text{ }\underset{%
\begin{array}{c}
Q\in \mathcal{L}^{\mathrm{\infty }}(\mathbb{N},\mathbb{R}^{d\times d}) \\
\left\Vert Q\right\Vert _{\infty }<\varepsilon
\end{array}%
}{\bigcap }\varrho _{\mathrm{BD}}(A+Q)=\varrho _{\mathrm{ED}}(A).
\end{equation*}%
Suppose that
\begin{equation*}
\gamma \in \underset{\varepsilon >0}{\bigcup }\text{ }\underset{%
\begin{array}{c}
Q\in \mathcal{L}^{\mathrm{\infty }}(\mathbb{N},\mathbb{R}^{d\times d}) \\
\left\Vert Q\right\Vert _{\infty }<\varepsilon
\end{array}%
}{\bigcap }\varrho _{\mathrm{BD}}(A+Q),
\end{equation*}%
then there exists $\varepsilon _{0}>0$ such that for all $Q\in \mathcal{L}^{%
\mathrm{\infty }}(\mathbb{N},\mathbb{R}^{d\times d})$ with $\left\Vert
Q\right\Vert _{\infty }<\varepsilon _{0}$ we have%
\begin{equation*}
\left( A+Q\right) \mathrm e^{-\gamma }\in \mathrm{BD}^{d}.
\end{equation*}%
The last relations implies that $A\mathrm e^{-\gamma }+\overline{Q}\in \mathrm{BD}%
^{d}$ for all $\overline{Q}\in \mathcal{L}^{\mathrm{\infty }}(\mathbb{N},%
\mathbb{R}^{d\times d})$ with $\left\Vert \overline{Q}\right\Vert _{\infty
}<\varepsilon _{0}\mathrm e^{\gamma }$ and therefore $A\mathrm e^{-\gamma }\in \operatorname{int}\mathrm{BD}%
^{d}.$ However, $\operatorname{int}\mathrm{BD}^{d}=\mathrm{ED}^{d}$ so $A\mathrm e^{-\gamma }\in
\mathrm{ED}^{d}$ and finally $\gamma \in \varrho _{\mathrm{ED}}(A).$ This
shows that
\begin{equation}
\underset{\varepsilon >0}{\bigcup }\text{ }\underset{%
\begin{array}{c}
Q\in \mathcal{L}^{\mathrm{\infty }}(\mathbb{N},\mathbb{R}^{d\times d}) \\
\left\Vert Q\right\Vert _{\infty }<\varepsilon
\end{array}%
}{\bigcap }\varrho _{\mathrm{BD}}(A+Q)\subset \varrho _{\mathrm{ED}}(A).
\label{c_3}
\end{equation}%
Suppose now, that $\gamma \in \varrho _{\mathrm{ED}}(A)$. It implies that $%
A\mathrm e^{-\gamma }\in \mathrm{ED}^{d}$ and since the set $\mathrm{ED}^{d}$ is
open in $\mathcal{L}^{\mathrm{Lya}}(\mathbb{N},\mathbb{R}^{d\times d})$ with
the metric induced by $\left\Vert \cdot\right\Vert _{\infty },$ then there is $%
\varepsilon _{1}>0$ such that
\begin{equation*}
A\mathrm e^{-\gamma }+Q=\left( A+Q\mathrm e^{\gamma }\right) \mathrm e^{-\gamma }\in \mathrm{ED}^{d}
\end{equation*}%
for all $Q\in \mathcal{L}^{\mathrm{\infty }}(\mathbb{N},\mathbb{R}^{d\times
d})$ with $\left\Vert Q\right\Vert _{\infty }<\varepsilon _{1}$. Since $%
\mathrm{ED}^{d}\subset \mathrm{BD}^{d},$ then
\begin{equation*}
\left( A+\widetilde{Q}\right)\mathrm e^{-\gamma }\in \mathrm{ED}^{d}
\end{equation*}%
for all $\widetilde{Q}\in \mathcal{L}^{\mathrm{\infty }}(\mathbb{N},\mathbb{R%
}^{d\times d})$ with $\left\Vert \widetilde{Q}\right\Vert _{\infty
}<\varepsilon _{1}\mathrm e^{\gamma }$ and consequently
\begin{equation*}
\gamma \in \underset{%
\begin{array}{c}
Q\in \mathcal{L}^{\mathrm{\infty }}(\mathbb{N},\mathbb{R}^{d\times d}) \\
\left\Vert Q\right\Vert _{\infty }<\varepsilon _{1}\mathrm e^{\gamma }%
\end{array}%
}{\bigcap }\varrho _{\mathrm{BD}}(A+Q).
\end{equation*}%
The last relation implies that
\begin{equation*}
\gamma \in \underset{\varepsilon >0}{\bigcup }\text{ }\underset{%
\begin{array}{c}
Q\in \mathcal{L}^{\mathrm{\infty }}(\mathbb{N},\mathbb{R}^{d\times d}) \\
\left\Vert Q\right\Vert _{\infty }<\varepsilon
\end{array}%
}{\bigcap }\varrho _{\mathrm{BD}}(A+Q)
\end{equation*}%
and finally, that the inclusion opposite to (\ref{c_3}) holds.
\end{proof}

\section{Appendix}

In this Appendix we describe the Millionshikov Rotation Method in the context of nonautonomous difference equations as a universal tool (see also \cite[Section 2]{Babiarz2017}). The method was developed by Millionshikov in the continuous-time case in  \cite{Millionscikov1967} (see also \cite{Izobov2012}).

For $x\in \mathbb{R}^{d}\backslash\{0\}$ and $\varepsilon \in \left[
0,\pi \right] $ denote the cone in direction $x$ with angle $\varepsilon$ by
\begin{equation*}
\operatorname{Con}\left[ x;\varepsilon \right] \coloneqq \left\{ y\in \mathbb{R}^{d}\backslash
\{0\}:\angle \left( x,y\right) \leq \varepsilon \right\} \cup \left\{
0\right\} ,
\end{equation*}%
where
\begin{equation*}
\angle \left( x,y\right) =\arccos \frac{\left\langle x,y\right\rangle }{%
\left\Vert x\right\Vert \left\Vert y\right\Vert }
\end{equation*}%
for $x,$ $y\in \mathbb{R}^{d}\backslash \{0\}$

\begin{definition}[$\varepsilon$-slow and $\varepsilon$-fast vectors of linear maps]
Let $F\bigskip :\mathbb{R}^{d}\rightarrow \mathbb{R}^{d}$ be linear.
An $x\in \mathbb{R}^{d}$ is called $\varepsilon $-slow for $F$
if
\begin{equation*}
\left\Vert Fx\right\Vert <\frac{\sin \varepsilon }{2}\left\Vert F\right\Vert
\left\Vert x\right\Vert .
\end{equation*}%
If
\begin{equation*}
\left\Vert Fx\right\Vert \geq \frac{\sin \varepsilon }{2}\left\Vert
F\right\Vert \left\Vert x\right\Vert ,
\end{equation*}%
then $x$ is called $\varepsilon$-fast for $F$. A $z\in \mathbb{R%
}^{d}$ is called maximal for $F$ if
\begin{equation*}
\left\Vert Fz\right\Vert =\left\Vert F\right\Vert \left\Vert z\right\Vert .
\end{equation*}
\end{definition}

The mapping $z\mapsto \Vert Fz\Vert $ on $\{z\in
\mathbb{R}^{d}:\Vert z\Vert =1\}$ is continuous and defined on a compact
set. Hence there is $z\in \mathbb{R}^{d}$, with $\Vert z\Vert =1$, such that
$\Vert Fz\Vert =\Vert F\Vert $ i.e.\ a maximal vector always exists.

\begin{lemma}[$\varepsilon$-fast vector in cone of $\varepsilon$-slow vector]
\label{MRML1}If $x\in \mathbb{R}^{d}$ is $\varepsilon -$slow for $F$, then
there exists $\overline{x}\in \operatorname{Con}\left[ x;\varepsilon \right] $ which is $%
\varepsilon -$fast for $F$.
\end{lemma}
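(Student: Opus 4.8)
The plan is to pass to the two-dimensional plane spanned by $x$ and a maximal vector of $F$, and to take $\overline{x}$ to be the rotation of $x$ inside that plane by the angle $\varepsilon$ towards the maximal vector. First I would exploit that the cone $\operatorname{Con}[x;\varepsilon]$ and the notions "$\varepsilon$-slow'' and "$\varepsilon$-fast'' are invariant under positive rescaling of a vector, so that without loss of generality $\Vert x\Vert = 1$. Note that $x$ being $\varepsilon$-slow already forces $x\neq 0$, $F\neq 0$ and $\sin\varepsilon>0$, since otherwise the defining inequality reads $\Vert Fx\Vert<0$. By the remark preceding the lemma there is a maximal unit vector $z$ with $\Vert Fz\Vert=\Vert F\Vert$. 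If $x$ and $z$ were linearly dependent, then $\Vert Fx\Vert=\Vert F\Vert\,\Vert x\Vert\geq\tfrac{\sin\varepsilon}{2}\Vert F\Vert\,\Vert x\Vert$, contradicting $\varepsilon$-slowness; hence $x$ and $z$ are linearly independent and $\alpha:=\angle(x,z)\in(0,\pi)$.

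Next I would split off the trivial case: if $\alpha\leq\varepsilon$, then $z\in\operatorname{Con}[x;\varepsilon]$ and $\Vert Fz\Vert=\Vert F\Vert\geq\tfrac{\sin\varepsilon}{2}\Vert F\Vert\,\Vert z\Vert$, so $\overline{x}:=z$ is $\varepsilon$-fast and we are done. Assume therefore $\alpha\in(\varepsilon,\pi)$, so that $\sin\alpha>0$ and $\sin(\alpha-\varepsilon)>0$. Write $z=\cos\alpha\,x+\sin\alpha\,u$ with a unit vector $u$ in $\operatorname{span}\{x,z\}$ orthogonal to $x$, and set $\overline{x}:=\cos\varepsilon\,x+\sin\varepsilon\,u$. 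Then $\Vert\overline{x}\Vert=1$ and $\langle x,\overline{x}\rangle=\cos\varepsilon$, so $\angle(x,\overline{x})=\varepsilon$ and thus $\overline{x}\in\operatorname{Con}[x;\varepsilon]$, as required.

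The computational heart of the proof is to estimate $\Vert F\overline{x}\Vert$ from below, and the key trick is to eliminate $u$. Substituting $Fu=\tfrac{1}{\sin\alpha}(Fz-\cos\alpha\,Fx)$ into $F\overline{x}=\cos\varepsilon\,Fx+\sin\varepsilon\,Fu$ and simplifying with the sine addition formula yields the identity $F\overline{x}=\tfrac{\sin(\alpha-\varepsilon)}{\sin\alpha}\,Fx+\tfrac{\sin\varepsilon}{\sin\alpha}\,Fz$. Since $\alpha\in(\varepsilon,\pi)$ both coefficients are positive, so by the triangle inequality together with $\Vert Fz\Vert=\Vert F\Vert$ and the slowness bound $\Vert Fx\Vert<\tfrac{\sin\varepsilon}{2}\Vert F\Vert$ one gets $\Vert F\overline{x}\Vert\geq\tfrac{\sin\varepsilon}{\sin\alpha}\Vert F\Vert\left(1-\tfrac{\sin(\alpha-\varepsilon)}{2}\right)$. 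Finally $\sin(\alpha-\varepsilon)\leq 1$ and $\sin\alpha\leq 1$ give $\Vert F\overline{x}\Vert\geq\tfrac{\sin\varepsilon}{2}\Vert F\Vert=\tfrac{\sin\varepsilon}{2}\Vert F\Vert\,\Vert\overline{x}\Vert$, i.e.\ $\overline{x}$ is $\varepsilon$-fast.

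I do not expect a genuine obstacle: the real content is the choice of $\overline{x}$ as the $\varepsilon$-rotation of $x$ toward a maximal vector and the clean identity for $F\overline{x}$, after which the estimate is a one-line trigonometric bound. The only care needed is the bookkeeping of the degenerate situations ($\sin\varepsilon=0$, $F=0$, $x\parallel z$, and $\alpha\leq\varepsilon$), each of which is dispatched in a single line once noticed.
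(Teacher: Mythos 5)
Your proof is correct and follows essentially the same strategy as the paper's: rotate $x$ by angle $\varepsilon$ toward a maximal vector $z$ in the plane $\operatorname{span}\{x,z\}$, derive the identity $F\overline{x}=\tfrac{\sin(\alpha-\varepsilon)}{\sin\alpha}Fx+\tfrac{\sin\varepsilon}{\sin\alpha}Fz$, and bound below by the reverse triangle inequality plus $\sin\alpha\leq 1$ and $\sin(\alpha-\varepsilon)\leq 1$. The only (cosmetic) difference is that the paper first reduces to $\angle(x,z)\leq\pi/2$ by replacing $z$ with $-z$, whereas you work directly with $\alpha\in(0,\pi)$, which is fine since all the relevant sines remain positive; you also spell out the degenerate cases more explicitly.
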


\begin{proof}
Suppose that $x\in \mathbb{R}^{d}$ is $\varepsilon$-slow for $F$ and
consider $z\in \mathbb{R}^{d}$ which is a maximal vector for $F$. Such a
vector always exists. Note that $-z$ is also a maximal vector
for $F$. Consider the plane $\Pi = \operatorname{\operatorname{span}}\left\{ x,z\right\} $. In
the plane $\Pi $, the vector $x$ forms an angle not greater than $\frac{\pi }{2}$
with the vector $z$ or the vector $-z$. Without loss of generality, we can
assume that
\begin{equation*}
\gamma :=\angle \left( x,z\right) \leq \frac{\pi }{2}.
\end{equation*}%
If $\gamma \leq \varepsilon ,$ then $z\in \operatorname{Con}\left[ x;\varepsilon \right] .$
Let us therefore consider the case $\gamma >\varepsilon $. Without loss of
generality, we can assume that $\left\Vert x\right\Vert =\left\Vert
z\right\Vert =1.$ Let $\overline{x}\in \Pi$, $\left\Vert
\overline{x}\right\Vert =1$, be a vector
between $x$ and $z$ and forming with the vector $z$ the angle $\gamma
-\varepsilon $. The vector $\overline{x}$ can be represented in the form of a sum
\begin{equation}
\overline{x}=\alpha x+\beta z.  \label{MRM1}
\end{equation}%
Using $\langle x,z\rangle = \cos\gamma$, $\langle\overline x,z\rangle = \cos(\gamma-\varepsilon)$ and $\langle\overline x,x\rangle = \cos\varepsilon$ as well as the addition theorems for $\sin$ and $\cos$
\begin{equation*}
\beta = \frac{\sin\varepsilon}{\sin\gamma}
\qquad\text{and}\qquad
\alpha = \frac{\sin(\gamma-\varepsilon)}{\sin\gamma}.
\end{equation*}
Hence $\beta \geq \sin \varepsilon$ and thus $\frac{\alpha }{\beta }\leq \frac{1}{\sin \varepsilon }$.
Applying the operator $F$ to both sides of (\ref{MRM1}) we get
\begin{equation*}
F\overline{x}=\alpha Fx+\beta Fz
\end{equation*}%
and therefore
\begin{align*}
\left\Vert F\overline{x}\right\Vert
&\geq \beta \left\Vert Fz\right\Vert-\alpha \left\Vert Fx\right\Vert
\\
&= \beta \left\Vert Fz\right\Vert \left(1-\frac{\alpha \left\Vert Fx\right\Vert }{\beta\left\Vert Fz\right\Vert}\right)
\\
&\geq \sin\varepsilon\Vert F\Vert\bigg(1 - \frac 1{\sin\varepsilon}\frac{\frac{\sin\varepsilon}{2}\Vert F\Vert \Vert x\Vert}{\Vert F\Vert}\bigg)
= \frac{\sin\varepsilon}{2}\Vert F\Vert \Vert\overline x\Vert.
\end{align*}
Thus $\overline{x}\in \operatorname{Con}\left[ x;\varepsilon \right] $ is $%
\varepsilon$-fast for $F$.
\end{proof}

\begin{lemma}[Rewriting rotation with perturbation of transformation I]
\label{MRML2}
Suppose that $F \colon \mathbb R^d \to \mathbb R^d$ is linear and bijective.
Let $x,y \in \mathbb R^d \backslash \{0\}$ and $\varepsilon \in \left( 0,\pi \right]$
are such that $y\in \operatorname{Con}\left[ x;\varepsilon \right]$ and $\left\Vert x\right\Vert =\left\Vert y\right\Vert$.
Then there exists a linear mapping $Q:\mathbb{R}^{d}\rightarrow \mathbb{R}^{d}$ such that $\left\Vert
Q\right\Vert \leq \varepsilon \left\Vert F\right\Vert ,$ $\left( F+Q\right)F^{-1}x = y$ and $F+Q$ is bijective.
\end{lemma}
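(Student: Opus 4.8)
The plan is to realize the desired perturbation as a small rotation applied after $F$. Write $u \coloneqq F^{-1}x$, so that $Fu = x$; the requirement $(F+Q)F^{-1}x = y$ is then equivalent to $Qu = y - x$. I would obtain such a $Q$ in the form $Q = (R - I)F$, where $R \colon \mathbb{R}^d \to \mathbb{R}^d$ is an orthogonal map with $Rx = y$: then $(F+Q)F^{-1}x = RFF^{-1}x = Rx = y$, and $F + Q = RF$ is bijective as a composition of bijections. So the whole task reduces to producing an orthogonal $R$ that sends $x$ to $y$ and is close to the identity.

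The construction of $R$ is the crux. Since $\Vert x\Vert = \Vert y\Vert$, set $\theta \coloneqq \angle(x,y) \in [0,\pi]$; the hypothesis $y \in \operatorname{Con}[x;\varepsilon]$ gives $\theta \le \varepsilon$. If $x$ and $y$ are linearly independent, let $R$ be the identity on $(\operatorname{span}\{x,y\})^{\perp}$ and, on $\operatorname{span}\{x,y\}$, the rotation by the angle $\theta$ carrying $x/\Vert x\Vert$ to $y/\Vert y\Vert$. If $y = x$, put $R = \operatorname{Id}$ (so that $Q = 0$). If $y = -x$ (hence $\theta = \pi$ and $\varepsilon = \pi$), let $R$ be the rotation by $\pi$ in any plane containing $x$ when $d \ge 2$, and $R = -\operatorname{Id}$ when $d = 1$. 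In each case $R$ is orthogonal and $Rx = y$.

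It remains to estimate $\Vert R - I\Vert$. On the orthogonal complement of the rotation plane $R - I$ vanishes, and on that plane the eigenvalues of $R - I$ are $e^{\pm i\theta} - 1$, each of modulus $2\sin(\theta/2)$, with $R - I$ acting there as a normal operator; hence $\Vert R - I\Vert = 2\sin(\theta/2)$. Using $2\sin(\theta/2) \le \theta \le \varepsilon$ we get $\Vert R - I\Vert \le \varepsilon$, so that $\Vert Q\Vert = \Vert (R - I)F\Vert \le \Vert R - I\Vert\,\Vert F\Vert \le \varepsilon\Vert F\Vert$, finishing the argument. The only genuine computations are the elementary bound $2\sin(\theta/2) \le \theta$ and the bookkeeping for the degenerate configurations $y = \pm x$; I do not expect either to be a real obstacle.
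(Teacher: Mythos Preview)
Your proof is correct and follows essentially the same approach as the paper: define $Q=(R-I)F$ with $R$ an orthogonal map rotating $x$ to $y$ and equal to the identity on $(\operatorname{span}\{x,y\})^{\perp}$, then bound $\Vert R-I\Vert\le\varepsilon$. The only cosmetic differences are that the paper writes the estimate as $\Vert(V-I)x\Vert^2=2-2\cos\theta\le\varepsilon^2$ via the cosine series rather than your equivalent $2\sin(\theta/2)\le\theta$, and it does not spell out the degenerate cases $y=\pm x$ that you handle explicitly.
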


\begin{proof}
We assume that $\Vert x\Vert = \Vert y\Vert = 1$.
Let $V:\mathbb{R}^{d}\rightarrow \mathbb{R}^{d}$ be in the special
orthogonal group, with $Vx=y$ and which is the identity on $\operatorname{\operatorname{span}}\left\{
x,y\right\} ^{\perp }$. We define $Q:=(V-I)F$. For $z\in \mathbb{R}^{d}$
it holds that
\begin{equation*}
(F+Q)z=\big(F+(V-I)F\big)z=VFz,
\end{equation*}%
from which it follows that $F+Q$ is bijective and $(F+Q)F^{-1}x = y$. Since
\begin{equation*}
\Vert Q\Vert =\Vert (V-I)F\Vert \leq \Vert V-I\Vert \cdot \Vert
F\Vert .
\end{equation*}%
We have to show that $\Vert V-I\Vert \leq \varepsilon $.
By the cosine series, it holds that $\cos \varepsilon \geq 1-\frac{\varepsilon ^{2}}{2}$.
Moreover, $V-I$ restricted to $\operatorname{\operatorname{span}}\{x,y\}^\perp$ is zero, and on the plane $\operatorname{\operatorname{span}}\{x,y\}$, $V$ acts as rotation.
Hence, $\Vert V-I\Vert = \Vert (V-I)x\Vert$ and
\begin{equation*}
\Vert (V-I)x\Vert^{2}
= \Vert y-x\Vert
^{2}=2-2\langle y,x\rangle \leq 2(1-\cos \varepsilon )\leq \varepsilon^{2}.
\end{equation*}
\end{proof}

\begin{lemma}[Rewriting rotation with perturbation of transformation II]
\label{MRML3}
Suppose that $F \colon \mathbb R^d \to \mathbb R^d$ is linear and bijective.
Let $x,y \in \mathbb R^d \backslash \{0\}$ and $\varepsilon \in \left( 0,\pi \right]$
are such that $y\in \operatorname{Con}\left[ x;\varepsilon \right]$ and $\left\Vert x\right\Vert =\left\Vert y\right\Vert$.
Then there exists a linear mapping $Q:\mathbb{R}^{d}\rightarrow \mathbb{R}^{d}$ such that $\left\Vert
Q\right\Vert \leq \varepsilon \left\Vert F\right\Vert ,$ $\left( F+Q\right)y = Fx$ and $F+Q$ is bijective.
\end{lemma}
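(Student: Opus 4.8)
The plan is to repeat the proof of Lemma \ref{MRML2} almost verbatim, with the rotation placed on the left of $F$ rather than on the right. As there, we may assume $\Vert x\Vert = \Vert y\Vert = 1$, and we let $V$ be the element of the special orthogonal group that acts as the rotation sending $x$ to $y$ in the plane $\operatorname{span}\{x,y\}$ and as the identity on $\operatorname{span}\{x,y\}^{\perp}$; in particular $V^{-1}y = x$.

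First I would set $Q := F(V^{-1} - I)$, so that $F + Q = FV^{-1}$. This is a composition of bijections, hence bijective, and
\begin{equation*}
(F+Q)y = FV^{-1}y = Fx,
\end{equation*}
which is the asserted identity. For the norm bound, observe that $V^{-1} = V^{T}$ is again orthogonal, so $\Vert V^{-1} - I\Vert = \Vert (V-I)^{T}\Vert = \Vert V - I\Vert$, and the inequality $\Vert V - I\Vert \leq \varepsilon$ is precisely the estimate already carried out in the proof of Lemma \ref{MRML2}: since $V-I$ vanishes on $\operatorname{span}\{x,y\}^{\perp}$ and acts as a rotation on $\operatorname{span}\{x,y\}$, one has $\Vert V - I\Vert = \Vert (V-I)x\Vert = \Vert y - x\Vert$, and $\Vert y-x\Vert^{2} = 2 - 2\langle y,x\rangle \leq 2(1-\cos\varepsilon) \leq \varepsilon^{2}$, using $y \in \operatorname{Con}[x;\varepsilon]$ and $\cos\varepsilon \geq 1 - \varepsilon^{2}/2$. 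Hence
\begin{equation*}
\Vert Q\Vert \leq \Vert F\Vert \cdot \Vert V^{-1} - I\Vert = \Vert F\Vert \cdot \Vert V - I\Vert \leq \varepsilon \Vert F\Vert .
\end{equation*}

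There is no genuine obstacle here: the only point requiring any thought is the placement of the rotation — one wants $F+Q = FV^{-1}$, so that applying $F+Q$ to $y$ first undoes the rotation $V$ (landing at $x$) and then applies $F$. With that choice, bijectivity and the norm estimate are identical to those in Lemma \ref{MRML2}, and the proof is in this sense analogous to (indeed a mirror image of) the previous one.
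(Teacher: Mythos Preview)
Your proof is correct and is essentially the paper's approach: the paper's proof sketch says to repeat the argument of Lemma~\ref{MRML2} with $Q\coloneqq F(V-I)$, which amounts to the same construction once $V$ is taken to be the rotation carrying $y$ to $x$ (equivalently, your $V^{-1}$). The norm estimate and bijectivity then follow exactly as you write.
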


\begin{proof}
To obtain the proof we can repeat the arguments from the proof of Lemma \ref%
{MRML2} with $Q:=F(V-I).$
\end{proof}

Consider now a system%
\begin{equation}
x(n+1)=A(n)x(n)  \label{MRM4}
\end{equation}%
with $(A(n))_{n\in \mathbb{N}}\in \mathcal{L}^{\mathrm{Lya}}(\mathbb{N},%
\mathbb{R}^{d\times d}).$

\begin{lemma}[From $\varepsilon$-slow to $\varepsilon$-fast]
\label{MRML4}Suppose that $\varepsilon \in \left( 0,\frac{\pi }{2}\right) $,
$k,m\in \mathbb{N}$, $k<m$ and $x_{0}\in \mathbb{R}^{d}\backslash \left\{
0\right\} .$ Then we have

\begin{itemize}
\item If a solution $\left( x\left( n,x_{0}\right) \right) _{n\in \mathbb{N%
}}$ of system (\ref{MRM4}) satisfies%
\begin{equation}
\Vert x(m,x_{0})\Vert <\frac{\sin \varepsilon }{2}\Vert \Phi _{A}(m,k)\Vert
\,\left\Vert x(k,x_{0})\right\Vert ,  \label{MRM5}
\end{equation}%
then there exists $x_{k}\in \operatorname{Con}\left[ x\left( k,x_{0}\right) ;\varepsilon %
\right] $ such that the solution $\left( \overline{x}\left(
n,k,x_{k}\right) \right) _{n\in \mathbb{N}}$ of system (\ref{MRM4})
satisfies
\begin{equation}
\Vert \overline{x}(m,k,x_{k})\Vert \geq \frac{\sin \varepsilon }{2}\Vert
\Phi _{A}(m,k)\Vert \,\left\Vert x_{k}\right\Vert .  \label{MRM6}
\end{equation}

\item If a solution $\left( x\left( n,x_{0}\right) \right) _{n\in \mathbb{N%
}}$ of system (\ref{MRM4}) satisfies%
\begin{equation}
\Vert x(k,x_{0})\Vert <\frac{\sin \varepsilon }{2}\Vert \Phi _{A}(k,m)\Vert
\,\left\Vert x(m,x_{0})\right\Vert ,  \label{MRM7}
\end{equation}%
then here exists $x_{m}\in \operatorname{Con}\left[ x\left( m,x_{0}\right) ;\varepsilon %
\right] $ such that the solution $\left( \overline{%
\overline{x}}\left( n,m,x_{m}\right) \right) _{n\in \mathbb{N}}$ of system (%
\ref{MRM4}) satisfies
\begin{equation}
\Vert \overline{\overline{x}}(k,m,x_{m})\Vert \geq \frac{\sin \varepsilon }{2%
}\Vert \Phi _{A}(k,m)\Vert \,\left\Vert x_{m}\right\Vert .  \label{MRM8}
\end{equation}
\end{itemize}
\end{lemma}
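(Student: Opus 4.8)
The plan is to recognize both statements as immediate reformulations of Lemma \ref{MRML1}, once the appropriate transition matrix is taken as the linear map $F$ in that lemma. For the first bullet I would set $F := \Phi_A(m,k)$; this is linear and bijective since $A$ is a Lyapunov sequence and $k < m$. From $x(m,x_0) = \Phi_A(m,k)\,x(k,x_0) = F x(k,x_0)$, the hypothesis \eqref{MRM5} reads precisely $\|F x(k,x_0)\| < \tfrac{\sin\varepsilon}{2}\,\|F\|\,\|x(k,x_0)\|$, i.e.\ $x(k,x_0)$ is $\varepsilon$-slow for $F$. Lemma \ref{MRML1} then produces $x_k := \overline{x} \in \operatorname{Con}[x(k,x_0);\varepsilon]$ which is $\varepsilon$-fast for $F$. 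The solution of \eqref{MRM4} with value $x_k$ at time $k$ satisfies $\overline{x}(m,k,x_k) = \Phi_A(m,k)\,x_k = F x_k$, hence $\|\overline{x}(m,k,x_k)\| = \|F x_k\| \geq \tfrac{\sin\varepsilon}{2}\,\|F\|\,\|x_k\| = \tfrac{\sin\varepsilon}{2}\,\|\Phi_A(m,k)\|\,\|x_k\|$, which is \eqref{MRM6}.

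For the second bullet I would argue identically with $F := \Phi_A(k,m) = \Phi_A(m,k)^{-1}$. Since $x(k,x_0) = \Phi_A(k,m)\,x(m,x_0) = F x(m,x_0)$, the hypothesis \eqref{MRM7} says $x(m,x_0)$ is $\varepsilon$-slow for $F$; Lemma \ref{MRML1} yields $x_m := \overline{x} \in \operatorname{Con}[x(m,x_0);\varepsilon]$ that is $\varepsilon$-fast for $F$; and the solution of \eqref{MRM4} with value $x_m$ at time $m$ satisfies $\overline{\overline{x}}(k,m,x_m) = \Phi_A(k,m)\,x_m = F x_m$, so $\|\overline{\overline{x}}(k,m,x_m)\| = \|F x_m\| \geq \tfrac{\sin\varepsilon}{2}\,\|F\|\,\|x_m\|$, which is \eqref{MRM8}.

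I do not expect a genuine obstacle here; the only things to watch are the direction conventions for $\Phi_A$ (using $\Phi_A(m,k)$ in the forward case and its inverse $\Phi_A(k,m)$ in the backward case) and the observation that $\varepsilon \in (0,\tfrac{\pi}{2})$ is stronger than what Lemma \ref{MRML1} needs, so that lemma applies without modification. Note also that the normalization $\|x\| = \|y\|$ required later in Lemmas \ref{MRML2} and \ref{MRML3} plays no role at this stage, since here only the cone membership and the $\varepsilon$-fast conclusion of Lemma \ref{MRML1} are used; in particular the vectors $x_k$ and $x_m$ produced by Lemma \ref{MRML1} are nonzero, so \eqref{MRM6} and \eqref{MRM8} are meaningful.
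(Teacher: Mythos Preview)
Your proposal is correct and follows essentially the same approach as the paper: set $F=\Phi_A(m,k)$ (respectively $F=\Phi_A(k,m)$), observe that the hypothesis says $x(k,x_0)$ (respectively $x(m,x_0)$) is $\varepsilon$-slow for $F$, and apply Lemma~\ref{MRML1} to obtain an $\varepsilon$-fast vector in the cone. The paper's proof is slightly terser but the logic is identical.
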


\begin{proof}
Let us denote by $F$ the mapping induced by the matrix $\Phi _{A}\left( m,k\right)$ and $x = x(k,x_{0})$.
Then inequality \eqref{MRM5} states that $x$ is $\varepsilon$-slow for $F$ and
therefore by Lemma \ref{MRML1} there exists $\overline{x}\in \operatorname{Con}\left[
x;\varepsilon \right] $ which is $\varepsilon$-fast for $F$ and which yields \eqref{MRM6} for $x_{k}\coloneqq\overline{x}$.
Similarly, applying Lemma \ref{MRML1} to the mapping $F$ induced by the matrix $\Phi_{A}\left(k,m\right)$ and $x = x(m,x_{0})$, we obtain \eqref{MRM8}.
\end{proof}

For a sequence $(Q(n))_{n\in \mathbb{N}}$ in $\mathbb R^{d\times d}$, consider the so-called perturbed system of system \eqref{MRM4},
\begin{equation}
z(n+1)=\left( A(n)+Q(n)\right) z(n).  \label{MRM9}
\end{equation}

\begin{lemma}[Perturbation at fixed time in cone]
\label{MRML5}Suppose that $\varepsilon \in \left( 0,\frac{\pi }{2}\right) $,
$k,m\in \mathbb{N}$, $1\leq k<m$ and $x_{0}\in \mathbb{R}^{d}\backslash
\left\{ 0\right\} .$ Then we have

\begin{itemize}
\item If $\left( x\left( n,x_{0}\right) \right) _{n\in \mathbb{N}}$ is a
solution of system (\ref{MRM4}) and $x_{k}\in \operatorname{Con}\left[ x\left(
k,x_{0}\right) ;\varepsilon \right] ,$ $\left\Vert x_{k}\right\Vert
=\left\Vert x\left( k,x_{0}\right) \right\Vert $, then there exists a
sequence $(Q(n))_{n\in \mathbb{N}}$ in $\mathbb R^{d\times d}$ such that $%
Q(n)=0$ for $n\neq k-1,$ $\left\Vert Q(k-1)\right\Vert \leq \varepsilon
\left\Vert A(k-1)\right\Vert $ and such that for the solution $\left(
z\left( n\right) \right) _{n\in \mathbb{N}}$ of system (\ref{MRM9}) with $%
z(k-1)=x(k-1,x_{0}),$ we have $z(k)=x_{k}$ and $(A(n)+Q(n))_{n\in \mathbb{N}%
}\in \mathcal{L}^{\mathrm{Lya}}(\mathbb{N},\mathbb{R}^{d\times d})$.

\item If $\left( x\left( n,x_{0}\right) \right) _{n\in \mathbb{N}}$ is a
solution of system (\ref{MRM4}) and $x_{m}\in \operatorname{Con}\left[ x\left(
m,x_{0}\right) ;\varepsilon \right] $, $\left\Vert x_{m}\right\Vert
=\left\Vert x\left( m,x_{0}\right) \right\Vert $, then there exists a
sequence $(Q(n))_{n\in \mathbb{N}}$ in $\mathbb R^{d\times d}$ such that $Q(n)=0$
for $n\neq m,$ $\left\Vert Q(m)\right\Vert \leq \varepsilon \left\Vert
A(m)\right\Vert $ and such that for the solution $\left( z\left( n\right)
\right) _{n\in \mathbb{N}}$ of system (\ref{MRM9}) with $%
z(m+1)=x(m+1,x_{0}), $ we have $z(m)=x_{m}$ and $(A(n)+Q(n))_{n\in \mathbb{N}%
}\in \mathcal{L}^{\mathrm{Lya}}(\mathbb{N},\mathbb{R}^{d\times d})$.
\end{itemize}
\end{lemma}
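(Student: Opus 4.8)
The plan is to obtain both assertions as essentially immediate consequences of Lemmas \ref{MRML2} and \ref{MRML3}; the geometric content is already contained there, so what remains is to choose the correct time step at which to perturb, to match the resulting single-matrix perturbation with the prescribed solution values, and to check that the perturbed coefficient sequence is again a Lyapunov sequence.

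For the first bullet, I would let $F$ be the linear map on $\mathbb{R}^{d}$ represented by the invertible matrix $A(k-1)$, and set $x := x(k,x_{0})$ and $y := x_{k}$. Since $x = F\,x(k-1,x_{0})$ and, by hypothesis, $y \in \operatorname{Con}[x;\varepsilon]$ with $\|x\| = \|y\|$ (note $\varepsilon \in (0,\pi/2) \subseteq (0,\pi]$), Lemma \ref{MRML2} provides a linear $Q(k-1)$ with $\|Q(k-1)\| \leq \varepsilon\|F\| = \varepsilon\|A(k-1)\|$, with $A(k-1)+Q(k-1)$ bijective, and with $(A(k-1)+Q(k-1))F^{-1}x = y$. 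Because $F^{-1}x = x(k-1,x_{0})$, this reads $(A(k-1)+Q(k-1))\,x(k-1,x_{0}) = x_{k}$. Defining $Q(n) := 0$ for $n \neq k-1$, the solution $(z(n))_{n\in\mathbb{N}}$ of \eqref{MRM9} with $z(k-1) = x(k-1,x_{0})$ is well defined (all coefficient matrices of the perturbed system are invertible) and satisfies $z(k) = x_{k}$; in fact $z(n) = x(n,x_{0})$ for $n \leq k-1$ and $z(n) = \Phi_{A}(n,k)x_{k}$ for $n \geq k$, since $Q$ vanishes away from $k-1$.

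It then remains to verify $(A(n)+Q(n))_{n\in\mathbb{N}} \in \mathcal{L}^{\mathrm{Lya}}(\mathbb{N},\mathbb{R}^{d\times d})$. The sequence $A+Q$ differs from the bounded sequence $A$ at the single index $k-1$, hence is bounded. Every $A(n)+Q(n)$ is invertible (equal to the invertible $A(n)$ for $n\neq k-1$, and bijective for $n=k-1$ by construction), and $((A(n)+Q(n))^{-1})_{n\in\mathbb{N}}$ differs from the bounded sequence $(A(n)^{-1})_{n\in\mathbb{N}}$ only at the index $k-1$, so it too is bounded. Thus $A+Q$ is a Lyapunov sequence.

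The second bullet is handled identically, now perturbing at time $m$ and invoking Lemma \ref{MRML3} instead of Lemma \ref{MRML2}: take $F$ to be the map $A(m)$, $x := x(m,x_{0})$, $y := x_{m}$; then $y\in\operatorname{Con}[x;\varepsilon]$ and $\|x\| = \|y\|$ yield a linear $Q(m)$ with $\|Q(m)\| \leq \varepsilon\|A(m)\|$, with $A(m)+Q(m)$ bijective and $(A(m)+Q(m))y = Fx$, that is $(A(m)+Q(m))x_{m} = A(m)x(m,x_{0}) = x(m+1,x_{0})$. Setting $Q(n):=0$ for $n\neq m$, the solution of \eqref{MRM9} with $z(m+1) = x(m+1,x_{0})$ satisfies $z(m) = x_{m}$, and the Lyapunov property of $A+Q$ follows exactly as above. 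There is no genuine obstacle in this argument; the only points that need a little attention are the correct bookkeeping identification $F^{-1}x = x(k-1,x_{0})$ (respectively the target value $Fx = x(m+1,x_{0})$) and the observation that modifying one term of a Lyapunov sequence by a bijective matrix keeps it in $\mathcal{L}^{\mathrm{Lya}}(\mathbb{N},\mathbb{R}^{d\times d})$.
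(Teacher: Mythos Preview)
Your proof is correct and follows essentially the same approach as the paper: both parts invoke Lemma~\ref{MRML2} (respectively Lemma~\ref{MRML3}) with $F=A(k-1)$ (respectively $F=A(m)$), $x=x(k,x_0)$ (respectively $x=x(m,x_0)$), and $y=x_k$ (respectively $y=x_m$), then extend the resulting single-step perturbation by zero. Your argument is in fact slightly more explicit than the paper's in verifying the Lyapunov property of $A+Q$.
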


\begin{proof}
According to Lemma \ref{MRML2} with $F$ being the mapping induced by the matrix $A(k-1)$, $x=x(k,x_{0})$ and $y=x_{k}$,
there exists $Q'$ in $\mathbb R^{d\times d}$ such that $\Vert Q'\Vert \leq \varepsilon \Vert A(k-1)\Vert$, such that
\begin{equation*}
    \big(A(k-1)+Q'\big)A(k-1)^{-1}x(k,x_0) = x_k,
    \quad\text{i.e.}\quad
    \big(A(k-1)+Q'\big)x(k-1,x_0) = x_k
\end{equation*}
and such that $A(k-1) + Q'$ is bijective.
Then the first point of the Lemma is satisfied for $(Q(n))_{n\in \mathbb{N}}$ in $\mathbb R^{d\times d}$ given by
\begin{equation*}
Q(n) =
\begin{cases}
Q' & \text{for $n = k-1$},
\\
0 & \text{otherwise}.
\end{cases}
\end{equation*}

To prove the second point we use Lemma \ref{MRML3} with $F$ being the mapping induced by the matrix $A(m)$, $x=x(m,x_{0})$ and $y=x_{m}$.
Hence there exists $Q'$ in $\mathbb{R}^{d\times d}$ such that $\Vert Q'\Vert \leq \varepsilon \Vert A(m)\Vert$, such that
\begin{equation*}
    \big(A(m)+Q'\big)x_m = A(m)x(m,x_{0}) = x(m+1,x_0)
\end{equation*}
and such $A(m) + Q'$ is bijective.
To conclude the proof, consider $(Q(n))_{n\in \mathbb{N}}$ in $\mathbb R^{d\times d}$ given by
\begin{equation*}
Q(n) =
\begin{cases}
Q' & \text{for $n = m$},
\\
0 & \text{otherwise}.
\end{cases}
\end{equation*}
\end{proof}

From Lemmas \ref{MRML4} and \ref{MRML5} we obtain Millionschikov's method of
rotations in a dynamic version (cp.\ also Remark \ref{rem:millionshikov_alg} for an algebraic formulation).

\begin{theorem}[Millionshikov Rotation Method]
\label{MRMC1}
Let $\varepsilon >0$, $k,$ $m\in \mathbb{N}$, $m>k$ and $%
x_{0}\in \mathbb{R}^{d}\backslash \left\{ 0\right\}$. Then

\begin{enumerate}
\item[(a)] (\textbf{Forward Millionshikov Rotation Method}) There exists a sequence $(Q(n))_{n\in \mathbb{N}}$ in $\mathbb R^{d\times d}$ such that $Q(n)=0$ for $n\neq k-1,$ $\left\Vert
Q(k-1)\right\Vert \leq \varepsilon \left\Vert A(k-1)\right\Vert ,$ $%
(A(n)+Q(n))_{n\in \mathbb{N}}\in \mathcal{L}^{\mathrm{Lya}}(\mathbb{N},%
\mathbb{R}^{d\times d})$ and such that the solution $(z(n))_{n\in \mathbb{N}%
} $ of the perturbed system (\ref{MRM9}) with $z(k-1)=x(k-1,x_{0})$ satisfies%
\begin{equation}
\Vert z(m)\Vert \geq \frac{\sin \varepsilon }{2}\Vert \Phi _{A}(m,k)\Vert
\,\left\Vert z(k)\right\Vert  \label{MRM10}
\end{equation}%
and%
\begin{equation}
\left\Vert x(k,x_{0})\right\Vert =\Vert z(k)\Vert .  \label{MRM11}
\end{equation}

\item[(b)] (\textbf{Backward Millionshikov Rotation Method}) There exists a sequence $(Q(n))_{n\in \mathbb{N}}$ in $\mathbb R^{d\times d}$ such that $Q(n)=0$ for $n\neq m,$ $\left\Vert Q(m)\right\Vert \leq
\varepsilon \left\Vert A(m)\right\Vert ,(A(n)+Q(n))_{n\in \mathbb{N}}\in
\mathcal{L}^{\mathrm{Lya}}(\mathbb{N},\mathbb{R}^{d\times d})$ and such that
the solution $(z(n))_{n\in \mathbb{N}}$ of the perturbed system (\ref{MRM9})
with $z(m+1)=x(m+1,x_{0})$ satisfies%
\begin{equation}
\Vert z(k)\Vert \geq \frac{\sin \varepsilon }{2}\Vert \Phi _{A}(k,m)\Vert
\,\left\Vert z(m)\right\Vert  \label{MRM12}
\end{equation}%
and%
\begin{equation}
\left\Vert x(m,x_{0})\right\Vert =\Vert z(m)\Vert .  \label{MRM13}
\end{equation}
\end{enumerate}
\end{theorem}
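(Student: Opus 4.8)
The plan is to read off Theorem~\ref{MRMC1} directly by combining Lemma~\ref{MRML4}, which turns a ``too slow'' solution into a nearby initial vector whose solution grows at least like $\frac{\sin\varepsilon}{2}\Vert\Phi_A(\cdot,\cdot)\Vert$, with Lemma~\ref{MRML5}, which realizes the switch to that nearby vector by a perturbation of the coefficient sequence concentrated at a single time step. Before invoking these lemmas one reduces to $\varepsilon\in(0,\tfrac\pi2)$: for the remaining values of $\varepsilon$ either $\sin\varepsilon\le0$, in which case \eqref{MRM10} and \eqref{MRM12} hold trivially with $Q\equiv0$ and $z(\cdot)=x(\cdot,x_0)$, or one runs the construction below with $\varepsilon$ replaced by a value in $(0,\tfrac\pi2)$ whose sine is at least $\sin\varepsilon$ (e.g.\ $\pi-\varepsilon$ when $\varepsilon\in(\tfrac\pi2,\pi)$), which only strengthens the conclusion while keeping the perturbation bound.

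For part~(a) I would take the solution $(x(n,x_0))_{n\in\mathbb N}$ of \eqref{MRM4} and split into two cases. If $x(k,x_0)$ is already $\varepsilon$-fast for $\Phi_A(m,k)$, i.e.\ $\Vert x(m,x_0)\Vert\ge\frac{\sin\varepsilon}{2}\Vert\Phi_A(m,k)\Vert\,\Vert x(k,x_0)\Vert$, take $Q\equiv0$ and $z(\cdot)=x(\cdot,x_0)$; then \eqref{MRM10} and \eqref{MRM11} are immediate. Otherwise \eqref{MRM5} holds, and the first item of Lemma~\ref{MRML4} yields $x_k\in\operatorname{Con}[x(k,x_0);\varepsilon]$ satisfying \eqref{MRM6}. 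Since cones are invariant under positive scaling and both sides of \eqref{MRM6} are positively homogeneous of degree one in $x_k$, I would rescale $x_k$ so that $\Vert x_k\Vert=\Vert x(k,x_0)\Vert$ and then apply the first item of Lemma~\ref{MRML5} with this $x_k$; this produces $(Q(n))_{n\in\mathbb N}$ with $Q(n)=0$ for $n\neq k-1$, $\Vert Q(k-1)\Vert\le\varepsilon\Vert A(k-1)\Vert$, $(A(n)+Q(n))_{n\in\mathbb N}\in\mathcal L^{\mathrm{Lya}}(\mathbb N,\mathbb R^{d\times d})$, and a solution $z$ of \eqref{MRM9} with $z(k-1)=x(k-1,x_0)$ and $z(k)=x_k$. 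As $Q(n)=0$ for every $n\ge k$, one has $z(n)=\Phi_A(n,k)z(k)=\Phi_A(n,k)x_k$ for $n\ge k$, so $z$ coincides with $\overline x(\cdot,k,x_k)$ on $[k,\infty)$; hence \eqref{MRM6} becomes exactly \eqref{MRM10}, and \eqref{MRM11} follows from $\Vert z(k)\Vert=\Vert x_k\Vert=\Vert x(k,x_0)\Vert$.

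Part~(b) is the time-reversed mirror image, built from the second items of the two lemmas. If $\Vert x(k,x_0)\Vert\ge\frac{\sin\varepsilon}{2}\Vert\Phi_A(k,m)\Vert\,\Vert x(m,x_0)\Vert$ take $Q\equiv0$ and $z(\cdot)=x(\cdot,x_0)$; otherwise \eqref{MRM7} holds, the second item of Lemma~\ref{MRML4} gives $x_m\in\operatorname{Con}[x(m,x_0);\varepsilon]$ satisfying \eqref{MRM8}, which I would rescale to $\Vert x_m\Vert=\Vert x(m,x_0)\Vert$, and the second item of Lemma~\ref{MRML5} produces $(Q(n))_{n\in\mathbb N}$ with $Q(n)=0$ for $n\ne m$, $\Vert Q(m)\Vert\le\varepsilon\Vert A(m)\Vert$, $(A(n)+Q(n))_{n\in\mathbb N}\in\mathcal L^{\mathrm{Lya}}(\mathbb N,\mathbb R^{d\times d})$, and a solution $z$ of \eqref{MRM9} with $z(m+1)=x(m+1,x_0)$ and $z(m)=x_m$. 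Since $Q(n)=0$ for $n<m$, one has $z(n)=\Phi_A(n,m)z(m)=\Phi_A(n,m)x_m$ for $n\le m$, so $z(k)=\overline{\overline{x}}(k,m,x_m)$ and \eqref{MRM8} yields \eqref{MRM12}, while \eqref{MRM13} is $\Vert z(m)\Vert=\Vert x_m\Vert=\Vert x(m,x_0)\Vert$.

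I do not expect a real obstacle here, since the argument is organized bookkeeping; the single point that must be watched is that Lemma~\ref{MRML5} inserts the perturbation at time $k-1$ in (a) and at time $m$ in (b), both \emph{outside} the index window $[k,m]$ on which the growth estimate is evaluated, so the perturbed transition matrix over $[k,m]$ still equals $\Phi_A$ and the inequalities \eqref{MRM6}, \eqref{MRM8} carry over verbatim to the perturbed solution $z$. (This tacitly uses $k\ge1$, which is the standing hypothesis $1\le k<m$ of Lemma~\ref{MRML5}.) The rescaling step and the reduction of the $\varepsilon$-range are the only other things to check, and both are elementary.
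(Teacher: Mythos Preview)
Your proposal is correct and follows essentially the same route as the paper: split into the trivial ``already $\varepsilon$-fast'' case (take $Q\equiv0$) and the $\varepsilon$-slow case, then invoke Lemma~\ref{MRML4} to find the nearby fast vector, rescale it to match the norm, and apply Lemma~\ref{MRML5} to realize the rotation as a single-step perturbation; the paper's verification of \eqref{MRM10}--\eqref{MRM13} is identical to yours. Your extra care about the range of $\varepsilon$ (reducing to $(0,\tfrac\pi2)$) and the implicit requirement $k\ge1$ from Lemma~\ref{MRML5} are points the paper passes over silently, so if anything your write-up is slightly more complete.
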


\begin{proof}
First we will prove the point 1. If for the solution $\left( x\left(
n,x_{0}\right) \right) _{n\in \mathbb{N}}$ of system (\ref{MRM4}) we have
\begin{equation*}
\Vert x(m,x_{0})\Vert \geq \frac{\sin \varepsilon }{2}\Vert \Phi
_{A}(m,k)\Vert \,\left\Vert x(k,x_{0})\right\Vert ,
\end{equation*}%
then $Q(n)=0,$ $n\in \mathbb{N}$ is the desired sequence. Suppose now that
\begin{equation*}
\Vert x(m,x_{0})\Vert <\frac{\sin \varepsilon }{2}\Vert \Phi _{A}(m,k)\Vert
\,\left\Vert x(k,x_{0})\right\Vert .
\end{equation*}%
According to point 1 of Lemma \ref{MRML4} there exists $x_{k}\in \operatorname{Con}\left[
x\left( k,x_{0}\right) ;\varepsilon \right] $ such that the solution $\left( \overline{x}\left( n,k,x_{k}\right) \right) _{n\in
\mathbb{N}}$ of system (\ref{MRM4}) satisfies (\ref{MRM6}). Since for any $%
\alpha \in \mathbb{R}\backslash \left\{ 0\right\} $ we have $\overline{x}%
(m,k,\alpha x_{k})=\alpha \overline{x}(m,k,x_{k})$, then we may assume that $%
\left\Vert x(k,x_{0})\right\Vert =\Vert x_{k}\Vert .$ We will show that the
sequence $(Q(n))_{n\in \mathbb{N}}$ from point 1 of Lemma \ref{MRML5} is the
desired one. From point 1 of Lemma \ref{MRML5} we know that $Q(n)=0$ for $%
n\neq k-1,$ $\left\Vert Q(k-1)\right\Vert \leq \varepsilon \left\Vert
A(k-1)\right\Vert ,$ $(A(n)+Q(n))_{n\in \mathbb{N}}\in \mathcal{L}^{\mathrm{%
Lya}}(\mathbb{N},\mathbb{R}^{d\times d}).$ Consider the solution $%
(z(n))_{n\in \mathbb{N}}$ of the perturbed system (\ref{MRM9}) with $%
z(k-1)=x(k-1,x_{0}).$ Then%
\begin{equation*}
\Vert z(m)\Vert =\Vert \Phi _{A}(m,k)z(k)\Vert =
\end{equation*}%
\begin{equation*}
\Vert \Phi _{A}(m,k)x_{k}\Vert =\left\Vert \overline{x}\left(
m,k,x_{k}\right) \right\Vert \overset{\text{by (\ref{MRM6}) }}{\geq }
\end{equation*}%
\begin{equation*}
\frac{\sin \varepsilon }{2}\Vert \Phi _{A}(m,k)\Vert \,\left\Vert
x_{k}\right\Vert =\frac{\sin \varepsilon }{2}\Vert \Phi _{A}(m,k)\Vert
\,\left\Vert z(k)\right\Vert ,
\end{equation*}%
since from point 1 of Lemma \ref{MRML5} we know that $z(k)=x_{k}.$ The last
inequality proves (\ref{MRM10}). Equality (\ref{MRM11}) follows from $%
z(k)=x_{k}$ and $\left\Vert x(k,x_{0})\right\Vert =\Vert x_{k}\Vert .$

The proof of point 2 is similar. If for the solution $\left( x\left(
n,x_{0}\right) \right) _{n\in \mathbb{N}}$ of system (\ref{MRM4}) we have
\begin{equation*}
\Vert x(k,x_{0})\Vert \geq \frac{\sin \varepsilon }{2}\Vert \Phi
_{A}(k,m)\Vert \,\left\Vert x(m,x_{0})\right\Vert ,
\end{equation*}%
then $Q(n)=0,$ $n\in \mathbb{N}$ is the desired sequence. Suppose now
that%
\begin{equation*}
\Vert x(k,x_{0})\Vert <\frac{\sin \varepsilon }{2}\Vert \Phi _{A}(k,m)\Vert
\,\left\Vert x(m,x_{0})\right\Vert .
\end{equation*}%
According to point 2 of Lemma (\ref{MRML4}) there exists $x_{m}\in \operatorname{Con}\left[
x\left( m,x_{0}\right) ;\varepsilon \right] $ such that the solution $\left( \overline{x}\left( n,m,x_{m}\right) \right) _{n\in
\mathbb{N}}$ of system (\ref{MRM4}) satisfies (\ref{MRM8}). Since for any $%
\alpha \in \mathbb{R}\backslash \left\{ 0\right\} $ we have $\overline{x}%
(k,m,\alpha x_{m})=\alpha \overline{x}(k,m,x_{m}),$ then we may assume that $%
\left\Vert x(m,x_{0})\right\Vert =\Vert x_{m}\Vert .$ We will show that the
sequence $(Q(n))_{n\in \mathbb{N}}$ from point 2 of Lemma \ref{MRML5} is the
desired one. From point 2 of Lemma \ref{MRML5} we know that $Q(n)=0$ for $%
n\neq m,$ $\left\Vert Q(m)\right\Vert \leq \varepsilon \left\Vert
A(m)\right\Vert ,$ $(A(n)+Q(n))_{n\in \mathbb{N}}\in \mathcal{L}^{\mathrm{Lya%
}}(\mathbb{N},\mathbb{R}^{d\times d}).$ Consider the solution $(z(n))_{n\in
\mathbb{N}}$ of the perturbed system (\ref{MRM9}) with $z(m+1)=x(m+1,x_{0}).$
From point 2 of Lemma \ref{MRML5} we know that $z(m)=x_{m}$ and therefore%
\begin{equation*}
\Vert z(k)\Vert =\Vert \Phi _{A}(k,m)z(m)\Vert =\Vert \Phi
_{A}(k,m)x_{m}\Vert =
\end{equation*}%
\begin{equation*}
\left\Vert \overline{x}\left( k,m,x_{m}\right) \right\Vert \overset{\text{by
(\ref{MRM8}) }}{\geq }
\end{equation*}%
\begin{equation*}
\frac{\sin \varepsilon }{2}\Vert \Phi _{A}(k,m)\Vert \,\left\Vert
x_{m}\right\Vert =\frac{\sin \varepsilon }{2}\Vert \Phi _{A}(k,m)\Vert
\,\left\Vert z(m)\right\Vert.
\end{equation*}
The last inequality proves (\ref{MRM12}). Equality (\ref{MRM13}) follows
from $z(m)=x_{m}$ and $\left\Vert x(m,x_{0})\right\Vert =\Vert x_{m}\Vert .$
\end{proof}

Theorem \ref{MRMC1} can be reformulated into an algebraic version.

\begin{remark}[Millionshikov Rotation Method, algebraic version]
\label{rem:millionshikov_alg}\hfill
\begin{itemize}
\item[(a)]
(\textbf{Forward Millionshikov Rotation Method})
Let $n,m\in \mathbb{N}$ with $m<n$, $B(m),\dots
,B(n)\in \mathrm{GL}(k)$, $v\in \mathbb{R}^{k}$ and $\varepsilon >0$. Then
there exists $R\in \mathbb{R}^{k\times k}$ with

(i) $\Vert R\Vert \leq \varepsilon \cdot \max \big\{\Vert B(m)\Vert ,\Vert
B(m)^{-1}\Vert \big\}$,

(ii) $B(m)+R\in \mathrm{GL}(k)$

(iii) $\Vert B(n)\cdots B(m+1)\left( B(m)+R\right) v\Vert \geq \frac{\sin
\varepsilon }{2}\Vert B(n)\cdots B(m+1)\Vert \cdot \Vert B(m)v\Vert $,

(iv) $\Vert B(m)v\Vert =\Vert \left( B(m)+R\right) v\Vert $.

\item[(b)]
(\textbf{Backward Millionshikov Rotation Method})
Let $n,m\in \mathbb{N}$ with $m<n$, $B(m),\dots
,B(n)\in \mathrm{GL}(k)$, $v\in \mathbb{R}^{k}$ and $\varepsilon >0$. Then
there exists $R\in \mathbb{R}^{k\times k}$ with

(i) $\Vert R\Vert \leq \varepsilon \cdot \max \big\{\Vert B(n)\Vert ,\Vert
B(n)^{-1}\Vert \big\}$,

(ii) $B(n)+R\in \mathrm{GL}(k)$

(iii) $\Vert B^{-1}(m)\cdots B^{-1}(n-1)\left( B(n)+R\right) ^{-1}v\Vert \geq
\frac{\sin \varepsilon }{2}\Vert B^{-1}(m)\cdots B^{-1}(n-1)\Vert \cdot
\Vert B^{-1}(n)v\Vert $,

(iv) $\Vert B^{-1}(n)v\Vert =\Vert \left( B(n)+R\right) ^{-1}v\Vert $.
\end{itemize}
\end{remark}

\end{document}